\newcommand{\dd}{{\rm d}}
\newcommand{\e}[1]{\,{\rm e}^{#1}\,}
\newcommand{\ii}{{\rm i}}
\newcommand{\caN}{{\mathcal N}}
\newcommand{\caO}{{\mathcal O}}
\newcommand{\bbE}{{\mathbb E}}
\newcommand{\bbN}{{\mathbb N}}
\newcommand{\bbP}{{\mathbb P}}
\newcommand{\bbR}{{\mathbb R}}
\newcommand{\bsq}{{\boldsymbol q}}
\newcommand{\bss}{{\boldsymbol s}}
\numberwithin{equation}{section}
\numberwithin{figure}{section}
\theoremstyle{plain}
\newtheorem{thm}{\protect\theoremname}[section]
  \theoremstyle{plain}
  \newtheorem{lem}[thm]{\protect\lemmaname}
  \theoremstyle{plain}
  \newtheorem{prop}[thm]{\protect\propositionname}
  \theoremstyle{remark}
   \newtheorem{rem}[thm]{Remark}
  \providecommand{\lemmaname}{Lemma}
  \providecommand{\propositionname}{Proposition}
\providecommand{\theoremname}{Theorem}
\newcommand{\N}{\mathbb{N}}
\newcommand{\E}[1]{\mathbb{E}\left[#1\right]}
\begin{document}

\title[Random permutations without macroscopic cycles]{Random permutations without macroscopic cycles}

\author{Volker Betz \and Helge Sch\"afer \and Dirk Zeindler}

\subjclass[2010]{60F17, 60F05, 60C05}
\keywords{random permutation, cycle structure, cycle weights, functional limit theorem, limit shape}

\begin{abstract}
We consider uniform random permutations of length $n$
conditioned to have 
%\begin{color}{red}
 no cycle longer than $n^\beta$ with $0<\beta<1$,
%\end{color}
%\begin{color}{blue}
%no cycle above a certain sub-macroscopic length,
%\end{color}
in the limit of large $n$. 
Since in unconstrained  uniform random permutations most of the indices 
are in cycles of macroscopic length, this is a singular conditioning in 
the limit. Nevertheless, we obtain a fairly complete picture about the 
cycle number distribution at various lengths. 
Depending on the scale at which cycle numbers are studied,  our results 
include Poisson convergence, 
a central limit theorem, a shape theorem
and two different functional central limit theorems. 
\end{abstract}

\maketitle

\section{Introduction}

Uniform random permutations are among the oldest and best understood models
of probability theory. One of their most prominent properties is that 
almost all indices are in macroscopic cycles: for all $\varepsilon > 0$, 
the probability that a given 
index of a uniform permutation of length $n$ is in a cycle of length less 
than $n\varepsilon$ converges to $\varepsilon$ as 
$n \to \infty$. 
Classical results about uniform random permutations include 
the convergence of the renormalized cycle structure towards a 
Poisson-Dirichlet distribution {\cite{Ki77, ShVe77}}, convergence of joint 
cycle numbers towards independent Poisson random variables in total 
variation distance \cite{AT92}, and a central limit theorem for 
cumulative cycle numbers \cite{DP85}.

Going beyond uniform random permutations, natural models are those where the probability
measure is still invariant under conjugation with a transposition, i.e. it depends only on the cycle structure. One variety of such
models are those with cycle weights, including the Ewens model \cite{Ewens} with applications in genetics, 
or more general cycle weight models \cite{BUV,BZ15,ElPel,ErUe11}
with applications in quantum many body systems \cite{BU1,BU2}.
Another variant is to condition on the absence of cycles of a given length. When the set $A\subset \mathbb{N}$ of forbidden cycle lengths is independent
of the permutation length $n$, this goes under the name of $A$-permutations \cite{YAK07,YAK08}. The case where the forbidden set of cycle lengths depends on $n$ is less well understood.

Our results can be paraphrased as follows: Let $a_{1},a_{2}\in\left(0,1\right)$ and fix a sequence $\alpha(n)$ with
\begin{equation}
\label{condition on alpha}
n^{a_1} \leq \alpha(n) \leq n^{a_2}.
\end{equation}
We consider the uniform measure 
on permutations of length $n$ with cycles of length less than $\alpha(n)$.
We will see in Theorem~\ref{main thm 2}, as $n\to\infty$, cycles of order  
$o(\alpha(n) / \log n)$ have the same asymptotic behaviour as on the full symmetric group.
 At the scale $\alpha(n) / \log n$, the influence of the restriction starts to manifest:
 If $C_m$ denotes the number of cycles of length $m$, then, for $m_n=\text{const}\cdot\alpha(n) / \log n$, $\E{C_{m_n}}$ converges to zero at a slower rate than for uniform permutations, as $n\to\infty$, see Section~\ref{expcyclen}.
 At the scale $c \alpha(n)$, $0 \leq c < 1$, the influence of the restriction becomes even stronger.
 If we have $\alpha(n) = o(\sqrt{n})$ then $\E{C_{m(n)}}\to \infty$ for $m(n)=c \alpha(n)$ for $c$  sufficiently close to $1$. In this case, a central limit theorem holds for $C_{m(n)}$, see Theorem~\ref{main thm 3}. This behaviour is new and cannot be observed for classical random permutations.
 Finally, we consider the scale $\alpha(n)$.
 We show that almost all cycles live at this scale and the limit as $n\to\infty$ of
 the fraction of cycles larger than $\alpha(n)\left(1 - \frac{\epsilon}{\log n}\right) $ tends to $1$ as $\epsilon\to\infty$.  
%\end{color}
%
%\begin{color}{blue}
%At the scale $\alpha(n) / \log n$, the influence of the restriction starts 
%to manifest in the sense that, as $n \to \infty$, the  
%expected cycle numbers converge to 
%zero more slowly than they would in unrestricted permutations. 
%At the scale $c \alpha(n)$, $0 \leq c < 1$, the restriction starts to 
%become manifest, and if $\alpha(n)$ diverges more slowly than 
%$\sqrt{n}$, diverging numbers of cycles occur for lengths corresponding to sufficiently large 
%$c$. In these cases, a central limit theorem holds. Finally, we investigate the scale where most of  the 
%cycles live. Due to the length constraint, there must 
%be at least $n/\alpha(n)$ cycles, and we show that almost all
%of them live on the scale 
%$\alpha(n) + \alpha(n) \frac{\log t}{\log n} $, $0 < t < 1$. 
%%\comment{(I THINK THIS IS 
%%ALL WE NEED! I DOD SOME OF THE CALCULATIONS AND IT SEEMS 
%%WE DO NOT NEED THE EXTRA FACTORS!)} 
%\end{color}
Also we show that at this scale, the cumulative cycle 
numbers satisfy a limit shape theorem, and their fluctuations 
around that limit shape satisfy a functional central limit 
theorem to the Brownian bridge, see Theorems~\ref{thm:Shape} and~\ref{thm:Fluc}.

The proofs of our results are based on the saddle point method
of asymptotic analysis. In particular, we benefit from the 
precise estimates given by Manstavicius and Petuchovas 
\cite{manstavivcius2016local} for the probability that 
an unconstrained permutation has no long cycles. While it is
clear that such results must be useful for our purposes, 
it is surprising that they, and extensions of the methods by 
which they are proved, provide such a complete picture of the 
situation. 

Let us give an outline of the paper. 
In Section \ref{results}, we state our assumptions and 
results. Section \ref{sec:Generating} discusses the relevant
saddle point method in our context and presents a general 
asymptotic equality which is at the base of almost all proofs of our 
main results. Section \ref{sec:proofs} then contains those 
proofs.

\section{Main results} \label{results}

\subsection{Notation and standing assumptions}
%\begin{color}{red}
%Let $\alpha(n)$ be as in \eqref{condition on alpha}.
%\end{color}
%\begin{color}{blue}
%	Let $\alpha:\mathbb{N}\rightarrow\mathbb{N}$ be a sequence of integers such that there 
%	exist $a_{1},a_{2}\in\left(0,1\right)$ with 
%	\begin{equation}
%	\label{condition on alpha}
%	n^{a_1} \leq \alpha(n) \leq n^{a_2}.
%	\end{equation}
%\end{color}
% 
% for some $a_{1},a_{2}\in\left(0,1\right)$. 
For $n\in\mathbb{N}$, let $S_{n,\alpha}$ 
be the set of permutations of length $n$ where all cycles have length 
$\alpha(n)$ or less, and let 
$\mathbb{P}_{n,\alpha}$ be the uniform measure on 
$S_{n,\alpha}$. We write $\mathbb{E}_{n,\alpha}$ for the 
expectation with respect to $\mathbb{P}_{n,\alpha}$. 
%\begin{color}{red}
Furthermore, we denote by  $\mathbb{P}_{n}$ the uniform measure on $S_n$ and by 
$\mathbb{E}_{n}$ the expectation on $S_n$ with respect to $\mathbb{P}_{n}$. 
 We study here the (joint) distribution of the 
random variables 
$C_m$, where $C_m=C_m(\sigma)$ denotes the number of cycles of length $m$ in the cycle decomposition of a permutation $\sigma$.
The index $m$ will often depend on $n$ and $\alpha(n)$, but we 
sometimes omit this dependence when it is clear from the context.
%\end{color}
% 
%\begin{color}{blue}
% We will be interested in the (joint) distribution of the 
%random variables 
%$C_m: S_{n,\alpha} \to \bbN_0$ that map a permutation 
%$\sigma$ to the number of cycles of length $m$ that 
%$\sigma$ has. 
%$m$ will often depend on $n$, but we will 
%sometimes suppress this dependence from the notation, 
%as well as the dependence of $\alpha$ on $n$. 
%\end{color}
%
When two sequences $(a_n)$ and $(b_n)$ are asymptotically 
equivalent, i.e.\ if $\lim_{n\to\infty} a_n/b_n = 1$, we write 
$a_n \sim b_n$. We also use the usual $\caO$ and $o$ notation,
i.e. $f(n) = \caO(g(n))$ means that there exists some constant 
$c > 0$ so that $|f(n)| \leq c |{g(n)}|$ for large $n$,
while $f(n) = o(g(n))$ means that for all $c>0$ there exists 
$n_c \in \bbN$ so that the inequality holds for all 
$n > n_c$.

\subsection{Cycle counts} 
\label{expcyclen}
The most basic characteristics of the $C_m$ 
are their expected values. Let $x_{n,\alpha}$ be the unique 
positive 
solution of the equation 
\begin{equation}
n=\sum_{j=1}^{\alpha}x_{n,\alpha}^{j},
\label{eq:StaSad}
\end{equation}
and define 
\begin{align}
\mu_m\left(n\right):=\frac{x_{n,\alpha}^{m}}{m}.
\label{eq:def_mu_n}
\end{align}
\begin{prop} 
\label{mean}
For all sequences $m = (m(n))_{n \in \bbN}$ 
with $m(n) \leq \alpha(n)$ for all $n$, we have 
\[
\bbE_{n,\alpha(n)} [C_{m(n)}] \sim 
\mu_{m(n)}(n)
\]
as $n \to \infty$. Furthermore, 
\begin{equation}
\label{mu asymptotics}	
	\frac{1}{m}\log (m \mu_{m}) =
	\log x_{n,\alpha} =
	 \frac{1}{\alpha} \Big(  
	\log \tfrac{n}{\alpha} + \log\log \tfrac{n}{\alpha} 
	+ \caO \big( 
	\tfrac{\log\log n}{\log n} \big) \Big) 
\end{equation}
for large $n$.
\end{prop}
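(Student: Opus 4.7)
The plan is to express $\bbE_{n,\alpha}[C_m]$ as a ratio of coefficients of the exponential generating function $e^{h_\alpha(z)}$ with $h_\alpha(z):=\sum_{j=1}^\alpha z^j/j$, and then to isolate the factor $x_{n,\alpha}^m$ via the saddle point analysis developed in Section~\ref{sec:Generating}. The starting point is the marked-cycle identity
$$
\bbE_{n,\alpha}[C_m] \;=\; \binom{n}{m}(m-1)!\,\frac{|S_{n-m,\alpha(n)}|}{|S_{n,\alpha(n)}|} \;=\; \frac{1}{m}\cdot\frac{[z^{n-m}]\,e^{h_\alpha(z)}}{[z^n]\,e^{h_\alpha(z)}},
$$
valid for $m\leq\alpha(n)$. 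Both coefficients will then be evaluated by Cauchy's formula on the common contour $|z|=x_{n,\alpha}$. Writing $\phi(\theta):=h_\alpha(x_{n,\alpha}e^{\ii\theta})-\ii n\theta$, the defining relation~\eqref{eq:StaSad} is precisely $\phi'(0)=0$, and a short computation yields $\phi''(0)=-B_n$ with $B_n:=\sum_{j=1}^\alpha j\,x_{n,\alpha}^j$. The change of variable extracts the prefactor $x_{n,\alpha}^m$ and leaves
$$
\bbE_{n,\alpha}[C_m] \;=\; \frac{x_{n,\alpha}^m}{m}\cdot\frac{\int_{-\pi}^\pi e^{\ii m\theta}\,e^{\phi(\theta)}\,\dd\theta}{\int_{-\pi}^\pi e^{\phi(\theta)}\,\dd\theta}.
$$

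The saddle point lemma of Section~\ref{sec:Generating}, which rests on the tail estimates of Manstavicius and Petuchovas~\cite{manstavivcius2016local}, will localise both integrals on the Gaussian window $|\theta|\lesssim B_n^{-1/2}$, so the ratio of integrals equals $\exp(-m^2/(2B_n))(1+o(1))$. To conclude the first claim I need $m^2/B_n\to 0$ uniformly for $m\leq\alpha$. The regime~\eqref{condition on alpha} forces $\alpha\log x_{n,\alpha}\to\infty$, since otherwise $n=\sum_{j=1}^\alpha x_{n,\alpha}^j$ would be bounded by a constant multiple of $\alpha\leq n^{a_2}$, contradicting $a_2<1$. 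Consequently the geometric sum concentrates on indices $j\geq\alpha/2$, which gives $B_n\geq(\alpha/2)\sum_{j\geq\alpha/2}x_{n,\alpha}^j\asymp\alpha n$ and therefore $m^2/B_n\leq 2\alpha/n\to 0$.

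The first equality in~\eqref{mu asymptotics} is immediate from $\mu_m=x_{n,\alpha}^m/m$. For the expansion of $L:=\log x_{n,\alpha}$, note that $x_{n,\alpha}>1$ as soon as $n>\alpha$, so the saddle point equation rearranges to $x^\alpha=1+n(x-1)/x$. Taking logs and using $\log(1-e^{-L})=\log L+\caO(L)$ reduces this to the fixed-point equation
$$
\alpha L \;=\; \log n+\log L+\caO(L)+\caO\!\left(\tfrac{1}{nL}\right).
$$
Starting from $L_0=\alpha^{-1}\log(n/\alpha)$ and iterating once gives $\alpha L_1=\log(n/\alpha)+\log\log n+o(1)$; a second iteration replaces $\log\log n$ by $\log\log(n/\alpha)$ up to the Taylor error $\log\bigl(1+\log\log n/\log(n/\alpha)\bigr)=\caO(\log\log n/\log n)$, which is the stated remainder.

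The hard part is the uniformity of the saddle-point ratio when $m$ approaches $\alpha$: the oscillating factor $e^{\ii m\theta}$ then swings appreciably across the Gaussian window of width $B_n^{-1/2}\asymp(\alpha n)^{-1/2}$, and the off-saddle tails have to be controlled carefully. This is precisely what the extension in Section~\ref{sec:Generating} of the Manstavicius--Petuchovas estimates to integrands carrying an oscillating test function is designed to accomplish.
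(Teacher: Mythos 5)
Your proof of the first claim is essentially the paper's own: your marked-cycle identity is, after shifting the coefficient by $m$, exactly the paper's formula $\bbE_{n,\alpha}[C_{m}]=\frac{1}{Z_{n,\alpha}}[z^n]\frac{z^{m}}{m}\exp\bigl(\sum_{j=1}^{\alpha}\frac{z^j}{j}\bigr)$, and the paper concludes by applying Proposition~\ref{prop:GenSaddle} with $f_n(z)=z^{m}/m$, whose admissibility is immediate since $\sup_{|z|=x_{n,\alpha}}|f_n(z)|=f_n(x_{n,\alpha})$ and $|\!|\!| f_n |\!|\!|_n\leq m\,\theta_n\leq\alpha\theta_n=(\alpha/n)^{5/12}\to0$. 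This is the same localisation you invoke; note only that on the relevant window the oscillating factor $\e{\ii m\theta}$ is asymptotically constant (indeed $m\theta_n\to0$, and a fortiori $mB_n^{-1/2}\leq\sqrt{\alpha/n}\to0$), so the ``hard part'' you anticipate when $m$ is close to $\alpha$ does not actually arise, and your Gaussian--Fourier factor $\e{-m^2/(2B_n)}$ is just a refinement of the paper's $1+\caO(\alpha/n)$; the tail estimate off the window is exactly what condition \eqref{admiss funct 1} and the Manstavi\v{c}ius--Petuchovas bound in the proof of Proposition~\ref{prop:GenSaddle} supply. Where you genuinely deviate is \eqref{mu asymptotics}: the paper imports it from Lemma~\ref{lem:StaSad} (i.e.\ from \cite{BS17,manstavivcius2016local}), whereas you rederive it via a fixed-point iteration. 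Your scheme is sound but needs two touch-ups: first, to make the iteration speak about the true solution you should anchor it with the easy two-sided a priori bound $\log\frac{n}{\alpha}\leq\alpha\log x_{n,\alpha}\leq\log n$ (from $x^{\alpha}\leq n\leq\alpha x^{\alpha}$), not merely $\alpha\log x_{n,\alpha}\to\infty$; second, your intermediate bookkeeping is off, since one substitution of $\log L=\log(\alpha L)-\log\alpha$ already gives $\alpha L=\log\frac{n}{\alpha}+\log\log\frac{n}{\alpha}+\caO(1)$, and the discrepancy between $\log\log n$ and $\log\log\frac{n}{\alpha}$ is $\Theta(1)$, so it cannot be absorbed into $\caO(\log\log n/\log n)$ as your second step suggests; done correctly, the second substitution sharpens the $\caO(1)$ to the stated $\caO(\log\log n/\log n)$, recovering \eqref{mu asymptotics}.
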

%
%\begin{color}{red}
% We give the proof of Proposition~\ref{mean} in Section~\ref{sec:mean}.
% For comparison, we have $\bbE_n[C_m] = \frac{1}{m}$ for $m\leq n$ and  $\bbE_n[C_m] = 0$ for $m>n$, see \cite[Lemma 1.1]{ABT02}.
%\end{color}
% 
An example illustrates the amount of 
information that we can already extract from Proposition~\ref{mean}.
Recall that for uniform permutations, $\bbE_n [C_m]=\frac{1}{m}$ for all $m\leq n$ \cite[Lemma 1.1]{ABT02}.  
We fix 
$\beta \in (0,1)$ and let $\alpha(n) = n^\beta$.
Equation (\ref{mu asymptotics}) then reads 
\[
\log ( m \mu_m) = m n^{-\beta} \Big( (1 - \beta) \log n + 
\log \log n + \log (1-\beta) + o(1) \Big).
\]
We now have the following asymptotic regimes:\\ %\\[2mm]
%\begin{color}{red}
% \begin{enumerate}
% \item
(1)
	For $m(n) = o(n^\beta/\log n)$, we have 
	$\lim_{n\to\infty} \mu_{m(n)}(n) m(n) = 1$. Thus we have
	\begin{align}
	 \bbE_{n,\alpha(n)} [C_{m(n)}] \sim  \frac{1}{m(n)}= \bbE_n[C_{m(n)}] .
	\end{align}
        In particular, the limiting behavior  is independent of $\beta$. 
	We call this the classical regime.\\
% \item
(2)
	For $m(n) = y\,n^\beta/\log n$ with $y>0$, we get $\lim_{n\to\infty} \mu_{m(n)}(n) m(n) = \e{y(1-\beta)}$. Thus
	\begin{align}
	 \bbE_{n,\alpha(n)} [C_{m(n)}] \sim \frac{\e{y(1-\beta)}}{m(n)} = \e{y(1-\beta)} \bbE_n[C_{m(n)}].
	\end{align}
	So in this regime, the number of cycles converges to zero more slowly than in unconstrained permutations.
	We therefore see that the constraint becomes visible in this region.
	Explicitly, we get 
	$$\mu_{m(n)}(n)  \sim  \frac{\log n}{y n^\beta} \e{y(1-\beta)}.$$
	The right-hand side above is minimal for $y = 1/(1-\beta)$ and then has the value $\mu_{m(n)}(n) \sim {\rm e} (1-\beta) \frac{\log n}{n^\beta}$.
	Also, we have that $\mu_{m(n)}(n)$ is increasing as a function of $y$ for $y \geq 1/(1-\beta)$.\\
% \item 
(3)
	The next regime occurs when we put $m = c n^\beta$ for $0 < c \leq 1$. 
	Then 
	\[
	\log \mu_m = (c(1-\beta) - \beta) \log n + 
	c \log \log n + c \log (1-\beta) - \log c + o(1).
	\]
	We see that $\mu_m \to 0$ when $c < \beta/(1-\beta)$, and 
	$\mu_m \to \infty$ when $c \geq \beta/(1-\beta)$. 
	So on this scale, the transition from finite 
	cycle counts to infinite ones occurs. 
	However, the case of infinite 
	cycle counts can only occur if there exists $c \in (0,1]$
	with 
	$c \geq \beta/(1-\beta)$, which means that 
	$\beta \leq 1/2$. 
	This can be explained intuitively as follows:
	Since the maximal cycle length is $n^\beta$, a permutation $\sigma\in S_{n,\alpha}$ has (at least) $n/n^\beta =n^{1-\beta}$ cycles.
	If $\beta>1/2$ then $n^\beta \gg n^{1-\beta}$ and thus there are more cycle lengths available than cycles.
	So there is no need for too many cycles to have the same length. 
	The situation is reversed when $\beta < 1/2$.
	We have in this case $n^\beta \ll n^{1-\beta}$, and thus there are always more cycles than available cycle lengths.
	The pigeon-hole principle now implies that at least $n^{1-\beta}/n^{\beta}=n^{1-2\beta}$ cycles have the same length.
	Since $\beta < 1/2$, we have $n^{1-2\beta}\to\infty$ and thus there has to be $m=m(n)$ such that $C_m\to\infty$. \\[1mm]
We will now investigate the joint distributions 
of the random variables $C_j$. We start
with the strongest result, which also has the most 
restrictive assumptions. Recall that the total variation distance of two probability 
measures $\bbP$ and 
$\tilde \bbP $ on a discrete probability space $\Omega$ is simply given by 
$\| \bbP - \tilde \bbP \|_{\rm TV} = \sum_{\omega \in \Omega} (\bbP(\omega) - \tilde \bbP(\omega))_+$. 
\begin{thm} \label{main thm 2}
	Let $b = (b(n))_n$ be a sequence so that 
	$b(n) = o \big( \alpha(n) (\log n)^{-1}\big)$. Let 
	$\bbP_{n,b(n),\alpha}$ be the distribution of $(C_1, \ldots C_{b(n)})$
	under $\bbP_{n,\alpha}$, and let $\tilde \bbP_{b(n)}$ 
	be the distribution
	of independent Poisson-distributed random variables 
	$(Z_{1}, \ldots Z_{b(n)})$ with 
	$\tilde \bbE _{b(n)}(Z_{j}) = \frac{1}{j}$ 
	for all $j \leq b(n)$. Then there exists $c<\infty$ so that for all 
	$n \in \bbN$, we have  
	\[
	\| \bbP_{n,b(n),\alpha} - \tilde \bbP_{b(n)} \|_{\rm TV} \leq c 
	\left( \frac{\alpha(n)}{n} + b(n) \frac{\log n}{\alpha(n)}
	\right).
	\]
\end{thm}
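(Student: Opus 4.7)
The strategy combines the well-known cycle-count conditioning relation with the sharp saddle-point analysis set up in Section~\ref{sec:Generating}. Let $Z_1, Z_2, \ldots$ be independent Poissons with $\tilde\bbE[Z_j] = 1/j$, and set $T_{a,c} := \sum_{j=a}^{c} j Z_j$, $f_{a,c}(z) := \sum_{j=a}^c z^j/j$, and $H_{a,c} := \sum_{j=a}^c 1/j$. Under $\bbP_{n,\alpha}$ the cycle vector $(C_1,\dots,C_\alpha)$ is distributed as $(Z_1,\dots,Z_\alpha)$ conditioned on $T_{1,\alpha} = n$; marginalising to the first $b$ coordinates and using independence of $T_{1,b}$ and $T_{b+1,\alpha}$ gives
\[
\bbP_{n,b,\alpha}(k) = \tilde\bbP_b(k)\, R(s(k)), \qquad R(s) := \frac{\bbP(T_{b+1,\alpha}=n-s)}{\bbP(T_{1,\alpha}=n)}\,\bbone_{s\le n},
\]
with $s(k):=\sum_{j=1}^b j k_j$. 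Since $T_{1,\alpha}=T_{1,b}+T_{b+1,\alpha}$ we have $\tilde\bbE_b[R(T_{1,b})]=1$, so
\[
\|\bbP_{n,b,\alpha} - \tilde\bbP_b\|_{\rm TV} = \tilde\bbE_b[(1-R(T_{1,b}))_+] = \tfrac{1}{2}\, \tilde\bbE_b|R(T_{1,b})-1|,
\]
reducing the proof to an estimate of $\tilde\bbE_b|R-1|$.

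For $R(s)$ itself I would apply the saddle-point method at $x = x_{n,\alpha}$ from \eqref{eq:StaSad} to both $\bbP(T_{1,\alpha}=n) = e^{-H_\alpha}[z^n]\exp(f_\alpha(z))$ and $\bbP(T_{b+1,\alpha}=n-s) = e^{-H_{b+1,\alpha}}[z^{n-s}]\exp(f_{b+1,\alpha}(z))$. For $s$ in the bulk of the $\tilde\bbP_b$-law of $T_{1,b}$, the saddle of the numerator stays close to $x$, and the Gaussian prefactors cancel to leading order; combined with the identity $\tilde\bbE_b[x^{T_{1,b}}] = \exp(f_b(x)-H_b)$, this produces
\[
R(s) = \frac{x^s}{\tilde\bbE_b[x^{T_{1,b}}]}\,\bigl(1 + \caO(\alpha/n)\bigr)
\]
uniformly in such $s$. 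The prefactor is automatically compatible with the exact identity $\tilde\bbE_b[R]=1$, a useful internal check; the $\caO(\alpha/n)$ error is the quantitative saddle-point/LCLT error supplied by the machinery of Section~\ref{sec:Generating}, in the spirit of the estimates of \cite{manstavivcius2016local} extended to the shifted target $n-s$.

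Setting $R_0(s) := x^s/\tilde\bbE_b[x^{T_{1,b}}]$ (so that $\tilde\bbE_b[R_0]=1$), the bound $|R-1|\le|R_0-1|+R_0\cdot\caO(\alpha/n)$ combined with Cauchy--Schwarz yields
\[
\tilde\bbE_b|R-1| \le \frac{\sqrt{\tilde{\rm Var}_b(x^{T_{1,b}})}}{\tilde\bbE_b[x^{T_{1,b}}]} + \caO(\alpha/n).
\]
A direct computation from independence gives $\tilde{\rm Var}_b(x^{T_{1,b}})/\tilde\bbE_b[x^{T_{1,b}}]^2 = \exp\!\bigl(\sum_{j=1}^b (x^j-1)^2/j\bigr) - 1$. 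Since \eqref{mu asymptotics} gives $\log x = \caO(\log n/\alpha)$ and the hypothesis $b = o(\alpha/\log n)$ forces $b\log x = o(1)$, a Taylor expansion yields $(x^j-1)^2/j \sim j(\log x)^2$ for $j\le b$, so the sum is $\sim (b^2/2)(\log x)^2 = \caO\bigl((b\log n/\alpha)^2\bigr)$. Taking the square root reproduces the second term of the claimed bound. The main technical obstacle lies in the uniform saddle-point expansion of $R$: producing the relative error $\caO(\alpha/n)$ simultaneously for all $s$ in the bulk of $T_{1,b}$ demands careful tracking of how the saddle of $T_{b+1,\alpha}$ at target $n-s$ drifts from $x_{n,\alpha}$, while the contribution from atypical values of $T_{1,b}$ (where the expansion deteriorates) must be ruled out separately, most efficiently by a Chebyshev argument based on $\tilde{\rm Var}_b(T_{1,b}) = b(b+1)/2$.
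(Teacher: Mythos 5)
Your skeleton (conditioning relation, reduction of the total variation distance to $\tilde\bbE_b\bigl[(1-R(T_{1,b}))_+\bigr]$, saddle-point evaluation of the ratio $R(s)$ of the two Poisson point probabilities, and the observation that the leading ratio $x^s/\exp\bigl(\sum_{j\le b}(x^j-1)/j\bigr)$ is $\ge 1-\caO(b\log n/\alpha)$ because $x\ge 1$ while $b\log x=o(1)$) is essentially the paper's proof, which runs through Lemma \ref{lem:rKlein} and Proposition \ref{prop:GenSaddle}. But the two steps you leave open are exactly where the quantitative content lies, and the one concrete tool you propose for them does not suffice. First, the uniform expansion $R(s)=R_0(s)\bigl(1+\caO(\alpha/n)\bigr)$ is not what the saddle-point machinery delivers: treating the shift as $f_n(z)=z^s$ (as the paper does), the relative error carries an extra term of order $\||f_n\||_n\approx s\,\theta_n$ with $\theta_n=n^{-5/12}\alpha^{-7/12}$, so the error is $\caO(\alpha/n+s\,\theta_n)$, not $\caO(\alpha/n)$; recentering the saddle at target $n-s$ instead produces drift corrections that likewise grow with $s$. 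This is harmless only if the "bulk" of $T_{1,b}$ is cut off at a small enough level.

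Second, and this is the genuine gap, your Chebyshev bound based on $\tilde{\rm Var}_b(T_{1,b})=b(b+1)/2$ is too weak to allow such a cutoff. To force the tail probability below the target $\caO\bigl(\alpha/n+b\log n/\alpha\bigr)$ by Chebyshev you must take the bulk as wide as $s\lesssim\sqrt{b\,\alpha/\log n}$, and on that range the uniform error claim fails: e.g.\ for $b=1$, $\alpha=\sqrt n$ one gets $s\,\theta_n\approx n^{-11/24}$ at the edge of the bulk, which is much larger than the claimed bound $\approx n^{-1/2}\log n$. The paper avoids this by invoking the much stronger Poisson tail estimate of \cite[Lemma 8]{AT92}, $\bbP[T_{0b}\ge\rho_n b]\le(\rho_n/\mathrm{e})^{-\rho_n}$ with $\rho_n=\log n$, which decays faster than any power of $n$; the bulk is then only $s\le b\log n$, where $s\,\theta_n\le b\theta_n\log n\le b\log n/\alpha$ and uniformity is immediate. (It also evaluates numerator and denominator at the \emph{same} saddle $x_{n,b,\alpha}$, absorbing the short cycles into the admissible function $\exp\bigl(\sum_{j\le b}(z^j-1)/j\bigr)$, so the Gaussian prefactors cancel exactly rather than "to leading order".) To complete your argument you would need to replace Chebyshev by a super-polynomial tail bound for $T_{1,b}$ (or higher moments), or prove a genuinely sharper expansion of $R(s)$ uniform over the wide Chebyshev bulk; as written, the claimed inequality does not follow. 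Your Cauchy--Schwarz/variance computation for the bulk term is correct but also unnecessary, since $R_0(s)\ge 1-\caO(b\log n/\alpha)$ pointwise for all $s\ge 0$.
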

%
%\begin{color}{red}
%We now compare Theorem~\ref{main thm 2} to the known results on unrestricted uniform permutations. 
Let $\bbP_{n,b(n)}$ be the distribution of $(C_1, \ldots C_{b(n)})$ under $\bbP_n$.
Then it was proven by Arratia and Tavar{\'e} in  \cite[Theorem~2]{AT92b} that $\| \bbP_{n,b(n)} - \tilde \bbP_{b(n)} \|_{\rm TV} \to 0$  iff $b(n) = o(n)$.
Thus the cycles of lengths $ o \big( \alpha(n) (\log n)^{-1}\big)$ have a similar behaviour under $\bbP_n$ and under $\bbP_{n,\alpha}$.
Furthermore, Arratia and Tavar{\'e} show in \cite[Theorem~2]{AT92} that 
there exists a function $F$ with $\log F(x)\sim -x\log x \text{ as } x\to\infty$ so that $\| \bbP_{n,b(n)} - \tilde \bbP_{b(n)} \|_{\rm TV} \leq F(n/b(n))$. 
This fast decay rate appears to be  special for the uniform measure. 
The decay rate for all other known measures is at most algebraically fast, including the case we study in this paper.

We can slightly relax the condition 
$b(n) = o (\alpha(n) (\log n)^{-1})$ in Theorem~\ref{main thm 2} if we only consider 
convergence of finite-dimensional distributions. What is more, 
we can in this case apply a 'tilt' as we would do in large 
deviations theory in order to get a better understanding of those cases where $\mu_m \to 0$ in Proposition \ref{mean}. 
%and thus the limiting distribution of $C_{m(n)}$ is trivial.
For $\nu \in \bbR_0^+$, 
consider the {\em tilted cycle numbers} 
$C_k^{(\nu)}$ with distribution 
%\begin{color}{red}
% The random variable $C_k^{(\nu)}$ agrees  with $C_k$ as a function from $S_n$ to $\N_0$, 
 %but we endow $S_n$ with a different probability measure such that\\
 %FRAGE: Der referre hat nicht ganz unrecht mit seinem Kommentar. Die Frage hier ist, ob meine umformulierung verst\"andlich, sinnvoll und/oder n\"otig ist.\\
%\end{color}
%\[
%\bbP_{n,\alpha}[C_k^{(\nu)} = l] = \frac{1}{Z} \frac{\e{\nu}}{\nu^l} 
%\bbP_{n,\alpha}[C_k = l]
%\]
% If we are dealing with several random variables, we tilt them simultaneously in the following way:
\begin{align*}
\mathbb{P}\left[C_{m_1}^{(\nu_{1})}=l_1, \ldots, C_{m_k}^{(\nu_{k})}=l_k\right]=\frac{1}{Z}\left(\prod_{j=1}^{k}\frac{\mathrm{e}^{\nu_j}}{\nu_j^{l_j}}\right)\mathbb{P}_{n,\alpha}\left[C_{m_1}=l_1, \ldots, C_{m_k}=l_k\right]
\end{align*}
for all $l_1,...,l_k\in\N_0$, where $Z$ is a normalizing constant.
\begin{thm} 
\label{main thm 1}
	Let $(m_1(n))_n, \ldots, (m_k(n))_n$ be sequences
	with $m_k(n) \leq \alpha(n)$ for all $n$ 
%	\begin{color}{red} 
	and $m_i(n)\neq m_j(n)$ for $i\neq j$.
 %     \end{color}
	  Assume that for all $j \leq k$, 
	\begin{equation} \label{limsup condition}
	\limsup_{n\to\infty} \mu_{m_j(n)}(n) < \infty.
	\end{equation}
	Then, as $n\to\infty$, 
	\[
	\Big(C_{m_1}^{(\mu_{m_1})}, \ldots, 
	C_{m_k}^{(\mu_{m_k})}\Big) \stackrel{d}{\longrightarrow} 
	(Z_1, \ldots, Z_k),
	\]
	where the $Z_j$ are independent Poisson distributed 
	random variables with 
%	\begin{color}{red} 
	 parameter $1$.
%	\end{color}
% 
%	\begin{color}{blue}
%	 	$\bbE[Z_j] = 1$. 
%	\end{color}
\end{thm}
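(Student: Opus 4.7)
The plan is to prove the theorem by showing pointwise convergence of the joint probability mass function, which suffices because the limit is supported on the countable set $\N_0^k$. Observe first that the tilt is designed precisely so that if the untilted $(C_{m_1},\ldots,C_{m_k})$ behave jointly like independent $\mathrm{Poisson}(\mu_{m_j})$ random variables, then the tilted family converges to independent $\mathrm{Poisson}(1)$. Indeed, for a single Poisson$(\lambda)$ variable $X$ the tilted law $\bbP(X^{(\nu)}=l)\propto e^{\nu}\nu^{-l}\bbP(X=l)$ is again Poisson, with parameter $\lambda/\nu$, so the choice $\nu=\lambda=\mu_{m_j}$ yields the claim. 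The real content of the theorem is therefore the joint Poisson approximation
\begin{equation*}
\bbP_{n,\alpha}[C_{m_j}=l_j\ \forall j]\;\sim\;\prod_{j=1}^{k}\frac{e^{-\mu_{m_j}}\,\mu_{m_j}^{\,l_j}}{l_j!}\qquad(n\to\infty),
\end{equation*}
and the hypothesis $\limsup_n\mu_{m_j}<\infty$ is what prevents these limit parameters from blowing up.

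To establish the joint Poisson approximation I would first derive the combinatorial identity obtained by decomposing a permutation $\sigma\in S_{n,\alpha}$ with $C_{m_j}(\sigma)=l_j$ into the chosen cycles and a complementary permutation on the remaining $n-M$ indices whose cycle lengths lie in $\alpha':=\{1,\ldots,\alpha(n)\}\setminus\{m_1,\ldots,m_k\}$, where $M:=\sum_j l_j m_j$. The multinomial coefficient cancels against the standard count $(l_jm_j)!/(m_j^{l_j}l_j!)$ of arrangements of $l_jm_j$ labelled points into $l_j$ cycles of length $m_j$, leaving
\begin{equation*}
\bbP_{n,\alpha}[C_{m_j}=l_j\ \forall j]=\prod_{j=1}^{k}\frac{1}{l_j!\,m_j^{l_j}}\cdot\frac{p_{n-M,\alpha'}}{p_{n,\alpha}},
\end{equation*}
where $p_{n,A}:=|S_{n,A}|/n!$. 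Since $\mu_{m_j}m_j=x_{n,\alpha}^{m_j}$, the target Poisson approximation is equivalent to
\begin{equation*}
\frac{p_{n-M,\alpha'}}{p_{n,\alpha}}\sim x_{n,\alpha}^{M}\prod_{j=1}^{k}e^{-\mu_{m_j}}\qquad(n\to\infty),
\end{equation*}
which is exactly the type of ratio asymptotic produced by the saddle point machinery of Section~\ref{sec:Generating}: the generating functions of $n!p_{n,\alpha}$ and of $n!p_{n,\alpha'}$ differ by the multiplicative factor $\exp(-\sum_j z^{m_j}/m_j)$, whose value at the common saddle $z=x_{n,\alpha}$ is exactly $\exp(-\sum_j\mu_{m_j})$, while the power $x_{n,\alpha}^{M}$ accounts for the index shift $n\to n-M$.

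I expect the main obstacle to be this ratio asymptotic rather than the tilt bookkeeping. Because $M$ can be of order $\alpha(n)$, the saddle point $x_{n-M,\alpha'}$ is genuinely different from $x_{n,\alpha}$, and one has to show that substituting the common value $x_{n,\alpha}$ introduces only a $1+o(1)$ error in both the exponent and the Gaussian prefactor of the saddle point formula. The hypothesis $\limsup_n\mu_{m_j}<\infty$ is precisely what keeps each removed term $x_{n,\alpha}^{m_j}/m_j$ bounded, which controls the displacement of the saddle point; once this bound fails one has $\bbE_{n,\alpha}[C_{m_j}]\to\infty$ and must pass to the Gaussian regime of Theorem~\ref{main thm 3}. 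A minor additional step is to interchange $\lim_n$ with the summation over $(l_1,\ldots,l_k)$ used to pin down the normalising constant $Z$, which follows from a uniform dominated bound on the joint masses.
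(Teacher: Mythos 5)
Your route is genuinely different from the paper's, and most of it is sound. The exact decomposition identity $\bbP_{n,\alpha}[C_{m_j}=l_j\ \forall j]=\prod_j (l_j!\,m_j^{l_j})^{-1}\,p_{n-M,\alpha'}/p_{n,\alpha}$ is correct, and for each \emph{fixed} $(l_1,\dots,l_k)$ the ratio asymptotic $p_{n-M,\alpha'}/p_{n,\alpha}\sim x_{n,\alpha}^{M}\exp(-\sum_j\mu_{m_j})$ does follow from Proposition \ref{prop:GenSaddle}: write $p_{n-M,\alpha'}=[z^n]f_n(z)\exp\bigl(\sum_{j\leq\alpha(n)}z^j/j\bigr)$ with $f_n(z)=z^{M}\exp\bigl(-\sum_j z^{m_j}/m_j\bigr)$ and check admissibility using $\mu_{m_j}=\caO(1)$ and $x_{n,\alpha}^{m_j}=m_j\mu_{m_j}\leq\alpha(n)\mu_{m_j}$; there is no need to compare $x_{n-M,\alpha'}$ with $x_{n,\alpha}$ at all, so the obstacle you single out (displacement of the saddle) is not the real one. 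For comparison, the paper never passes through the probability mass function: it computes the joint moment generating function of the tilted variables via \eqref{eq: finite joint generating functions} and Proposition \ref{prop:GenSaddle}, and invokes the Curtiss-type theorem of \cite{Yakymiv2011}.

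The genuine gap is the step you call minor: pinning down the normalizing constant $Z$. Pointwise convergence of the unnormalized masses $\prod_j e^{\mu_{m_j}}\mu_{m_j}^{-l_j}\bbP_{n,\alpha}[C_{m_j}=l_j\ \forall j]\to\prod_j 1/l_j!$ gives, by Fatou, only $\liminf Z\geq e^{k}$; to exclude mass escaping to infinity (equivalently, to prove tightness of the tilted laws) you need a bound on these masses that is summable uniformly in $n$ \emph{and} in $(l_1,\dots,l_k)$. Your fixed-$l$ saddle-point asymptotic does not supply this: the error terms in Proposition \ref{prop:GenSaddle} are uniform only over families with common admissibility constants, and for $f_n(z)=z^{M}\exp(-\sum_j z^{m_j}/m_j)$ the quantity $|\!|\!| f_n |\!|\!|_n$ is of order $\theta_n M$, which diverges once $M=\sum_j l_jm_j$ is comparable to $n$ --- and the sum defining $Z$ runs over all such $l$. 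Via your identity (or via joint falling-factorial moments) the required domination amounts to a bound of the form $p_{n-M,\alpha'}/p_{n,\alpha}\leq C^{\,l_1+\dots+l_k}\,x_{n,\alpha}^{M}$ uniformly in $M\leq n$, which needs genuinely uniform estimates on $p_{n,\alpha}$ (of the type proved in \cite{manstavivcius2016local}) and is not justified by anything you cite. The paper's route avoids the issue entirely, because $Z=e^{\sum_j\mu_{m_j}}\,\bbE_{n,\alpha}\bigl[e^{-\sum_j\log(\mu_{m_j})C_{m_j}}\bigr]$ is itself a single expectation evaluated by the saddle-point formula (the $\boldsymbol{s}=0$ case of the tilted MGF), so no interchange of limit and infinite sum is needed. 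As written, your argument is incomplete at exactly this point; either supply the uniform bound or compute the tilted generating function directly.
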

%\begin{color}{red}
From equation \eqref{mu asymptotics} and our assumptions on $\alpha(n)$ in \eqref{condition on alpha}, 
it follows that a sufficient condition for \eqref{limsup condition} is that 
$m_j(n) \leq c \alpha(n)$ for some $c < \frac{a_1}{1 - a_1}$ with $a_1$ as in \eqref{condition on alpha}.
The case when $m_j(n)$ converges to a limit is already covered by Theorem~\ref{main thm 2}.
The most interesting applications of Theorem~\ref{main thm 1} are in the situation when $\mu_{m_j}$ converges to a limit
while $m_j \to \infty$ as $n\to\infty$.
For instance, if $\mu_m \to 0$, $C_m$ converges in distribution
to the trivial Poisson distribution with parameter $0$, but just like it is the case in large deviations 
theory, the tilt allows us to extract much more information about this convergence. 
We have in particular that for all $j\in\N_0$, the probability 
$\bbP_{n,\alpha}[C_{m} = j]$ decays like $\mu_{m}^{-j}$.
%\end{color}
%\begin{color}{blue}
% {\bf Remarks:} \\[1mm]
%1. From equation \eqref{mu asymptotics} and 
%our standing assumption \eqref{condition on alpha}, we find 
%that a sufficient condition for \eqref{limsup condition} 
%is that there exists $c < \frac{a_1}{1 - a_1}$
%so that for all $j$, $m_j(n) \leq c \alpha(n)$, where $a_1$ is given by \eqref{condition on alpha}.\\[1mm]
%2. If the $\mu_{m_j}$ converge as $n \to \infty$, it follows 
%that the random vector $(C_{m_1}, \ldots C_{m_k})$ converges 
%in distribution to a vector of independent Poisson random 
%variables $(Z_1, \ldots Z_n)$ with the mean of $Z_j$ given 
%by $\lim_{n\to\infty} \mu_{m_j}$. It is not 
%hard to deduce from \eqref{eq:StaSad} that 
%$\lim_{n\to\infty} x_{n,\alpha} = 1$, so convergence of 
%$m$ implies convergence of $\mu_m$. However, this case is already 
%covered in Theorem \ref{main thm 2} and resembles the 
%unconstrained situation. On the other hand, 
%we have seen in the 
%example of Subsection \ref{expcyclen} that convergence of 
%$\mu_m$ is possible even when $m$ diverges. 
%\\[1mm]
%3. For $m$ such that 
%$\mu_m \to 0$, the distribution of $C_m$ converges 
%to the trivial Poisson distribution
%with mean $0$, but just like it is the case in large deviations 
%theory, the tilt allows us to extract much more information than
%that. For instance, it follows that for all $j$, the probability 
%$\bbP(C_{m} = j)$ decays like $\mu_{m}^{-j}$. \\[2mm]
%\end{color}

We now treat the case of diverging expected cycle numbers. 
Here, the standard rescaling leads to a central limit theorem.
\begin{thm}
\label{main thm 3}
Let $(m_1(n))_n, \ldots, (m_k(n))_n$ be sequences
	with $m_j(n) \leq \alpha(n)$ for all $n$ and all $j$
%	\begin{color}{red} 
	and $m_i(n)\neq m_j(n)$ for $i\neq j$.
   %   \end{color}
	Assume that 
	$\mu_{m_j(n)}(n)\rightarrow\infty$ as $n \to \infty$ for 
	all $j$. Assume finally that in 
	\eqref{condition on alpha}, we have $a_1 > 1/7$. 
	Define
\begin{align*}
\widetilde{C}_{m_{j}}
:=
\frac{C_{m_{j}}-\mu_{m_j}}{\sqrt{\mu_{m_j}}}.
\end{align*}
%
% $\mu_{k}\left(n\right)\rightarrow\infty$ for all $k$ and if there
%is some $\delta>0$ such that $n^{\frac{1}{7}+\delta}=\caO\left(\alpha\left(n\right)\right)$,
Then
\begin{align*}
 \bigl(\widetilde{C}_{m_{1}},\ldots, \widetilde{C}_{m_{k}} \bigr)  
 \stackrel{d}{\longrightarrow} 
 (N_1,\ldots, N_k) \qquad \text{as } n \to \infty,
% \mathcal{N}(0,1)
\end{align*}
where $(N_j)_{j=1}^k$ are independent, standard normal 
distributed random variables.
\end{thm}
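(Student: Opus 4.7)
The approach is via characteristic functions, leveraging the saddle point asymptotics developed in Section~\ref{sec:Generating}. Since Theorem~\ref{main thm 1} shows that when $\mu_{m_j}$ stays bounded the cycle counts are asymptotically Poisson with parameters $\mu_{m_j}$, and a Poisson random variable with large parameter $\lambda$ satisfies a CLT when standardized by $\sqrt{\lambda}$, the statement is the natural continuation of that behaviour into the regime $\mu_{m_j} \to \infty$.

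I would begin by expressing the joint probability generating function via the cycle-index exponential formula. Writing $\Psi_n(s_1,\ldots,s_k) := \bbE_{n,\alpha}\bigl[\prod_j s_j^{C_{m_j}}\bigr]$, one has
\[
\sum_{n \geq 0} \frac{z^n}{n!} |S_{n,\alpha}| \, \Psi_n(s_1,\ldots,s_k) = \exp\Bigl( \sum_{m=1}^{\alpha} \frac{z^m}{m} + \sum_{j=1}^k (s_j - 1) \frac{z^{m_j}}{m_j} \Bigr).
\]
Extracting the coefficient of $z^n$ by a Cauchy integral and dividing by the same object evaluated at $s_j=1$ expresses $\Psi_n$ as a ratio of contour integrals. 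Applying the saddle point method of Section~\ref{sec:Generating} at the saddle $z = x_{n,\alpha}$ should then yield
\[
\Psi_n(s_1,\ldots,s_k) = \prod_{j=1}^k \exp\bigl( (s_j - 1)\, \mu_{m_j} \bigr) \cdot \bigl( 1 + r_n(s_1,\ldots,s_k) \bigr),
\]
with an error $r_n$ that one needs uniform in $(s_1,\ldots,s_k)$ on an appropriate subset of $\bbC^k$.

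The second step is to substitute $s_j = \exp(i t_j / \sqrt{\mu_{m_j}})$ and compute
\[
\bbE_{n,\alpha}\Bigl[\exp\bigl(i \textstyle\sum_j t_j \widetilde{C}_{m_j}\bigr)\Bigr] = \Psi_n(s_1,\ldots,s_k) \cdot \prod_j \e{-i t_j \sqrt{\mu_{m_j}}}.
\]
Expanding $(s_j - 1) = i t_j/\sqrt{\mu_{m_j}} - t_j^2/(2\mu_{m_j}) + \caO(\mu_{m_j}^{-3/2})$ and multiplying by $\mu_{m_j}$ produces an imaginary phase $i t_j \sqrt{\mu_{m_j}}$ that cancels with the centering factor, leaving $\prod_j \e{-t_j^2/2}$ up to terms that vanish. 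L\'evy's continuity theorem then delivers joint convergence to independent standard normals.

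The main obstacle will be controlling $r_n(s_1,\ldots,s_k)$ uniformly as each $s_j$ approaches $1$ at rate $\mu_{m_j}^{-1/2}$. The saddle point error is governed by higher derivatives of $\log\bigl(\sum_{m \leq \alpha} z^m/m\bigr)$ at $z = x_{n,\alpha}$, whose sizes involve negative powers of $\alpha(n)$, and these have to beat the factors $\sqrt{\mu_{m_j}} \lesssim \sqrt{n}/\alpha$ generated by the rescaling. The hypothesis $a_1 > 1/7$ in \eqref{condition on alpha} should be precisely the threshold under which the resulting product of error terms still tends to zero. Tracking these error bounds, essentially a refined version of the asymptotics underlying Theorems~\ref{main thm 2} and~\ref{main thm 1}, is where the technical work lies.
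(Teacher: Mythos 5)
Your skeleton matches the paper's: both start from the generating-function identity \eqref{eq: finite joint generating functions}, extract the coefficient by a contour integral at the saddle $x_{n,\alpha}$, and win by the second-order expansion $(s_j-1)\mu_{m_j}$ whose linear part cancels the centering, leaving $\prod_j\e{-t_j^2/2}$ (resp.\ $\e{s_j^2/2}$). The decisive difference is that you tilt with complex numbers $s_j=\e{\ii t_j/\sqrt{\mu_{m_j}}}$ and invoke L\'evy continuity, whereas the paper works with real $s_j\geq 0$, i.e.\ moment generating functions, and gets convergence in distribution from MGF convergence on $[0,\infty)^k$ via Yakymiv's extension of the Curtiss theorem \cite{Yakymiv2011}. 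This is not a cosmetic choice: the uniform error control that you correctly flag as ``the main obstacle'' is exactly the step that the paper's machinery only delivers for nonnegative tilts, and your plan as written does not close it. With real $s_j\geq0$ the prefactor $f_n(z)=\exp\bigl(\sum_j(\e{s_j/\sqrt{\mu_j}}-1)z^{m_j}/m_j\bigr)$ has nonnegative Taylor coefficients, so $|f_n(z)|\leq f_n(x_{n,\alpha})$ on the whole contour and condition \eqref{admiss funct 1} holds trivially; the only thing left to check is \eqref{admiss funct 2}, i.e.\ $\theta_n x_{n,\alpha}^{m_j}/\sqrt{\mu_{m_j}}\to0$, and with $\theta_n$ as in \eqref{def theta_n} and the worst case $m_j=\alpha(n)$ this is where $a_1>1/7$ in \eqref{condition on alpha} enters (via Lemma~\ref{lem:StaSad}, $x_{n,\alpha}^{\alpha}\sim\frac{n}{\alpha}\log\frac{n}{\alpha}$, so one needs $\theta_n\sqrt{n\log(n/\alpha)}\to0$). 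So the threshold comes from the \emph{saddle-vicinity} derivative bound, not from a race between minor-arc decay and the rescaling, as you conjecture.

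With your complex tilt, by contrast, $\Re\bigl((s_j-1)z^{m_j}\bigr)$ is no longer maximized at $z=x_{n,\alpha}$: on the circle $|z|=x_{n,\alpha}$ one has $\sup_z|f_n(z)|/|f_n(x_{n,\alpha})|$ of order $\exp\bigl(|s_j-1|\,x_{n,\alpha}^{m_j}/m_j\bigr)\approx\exp\bigl(|t_j|\sqrt{\mu_{m_j}}\bigr)$, which is super-polynomial in $n$ whenever $\mu_{m_j}\to\infty$ at a power of $n$. Hence \eqref{admiss funct 1} fails and Proposition~\ref{prop:GenSaddle} cannot be quoted as a black box: the minor-arc estimate in its proof, which sacrifices a factor $n^K$, no longer kills the contribution from $|\theta|\geq\theta_n$, and in the intermediate range $\theta_n\leq|\theta|\leq\pi/\alpha$ the cheap bound $-\Re g_n(\theta)\gtrsim(n/\alpha)^{1/6}$ is far smaller than the possible growth $|t_j|\sqrt{\mu_{m_j}}$ of the tilt. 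To carry out your route you would have to absorb the tilt into $g_n$ and redo the outside-the-saddle estimates (using, e.g., $-\Re g_n(\theta)\gtrsim\lambda_{2,n}\theta^2$ on $|\theta|\leq\pi/\alpha$ together with $|z^{m_j}-x_{n,\alpha}^{m_j}|\leq x_{n,\alpha}^{m_j}m_j|\theta|$), which is genuinely more work than the paper's argument. The quickest repair is simply to replace characteristic functions by Laplace transforms with $s_j\geq0$ and cite \cite{Yakymiv2011}, which is precisely what the paper does.
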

%\begin{color}{red}
% \begin{rem}
The condition 
$\alpha\left(n\right) \geq n^{\frac{1}{7}+\delta}$ is a technical one, 
and making it allows to avoid significant technical complications. A forthcoming paper will show that the theorem holds under 
condition \eqref{condition on alpha}.
% \end{rem}
%
%\end{color}
%\begin{color}{blue} 
%{\bf Remarks:} \\[2mm]
%1.) The condition 
%$\alpha\left(n\right) \geq n^{\frac{1}{7}+\delta}$ is 
%required for technical reasons. We believe 
%that by a more detailed investigation of the 
%corresponding saddle point equation, this condition 
%can be removed and that the theorem holds under 
%condition \eqref{condition on alpha}.
%\\[1mm] 
%
%2.) One might expect that the cycle numbers are  mod-Poisson 
%convergent under the assumptions of Theorem~\ref{main thm 3}.
%This convergence is much stronger than a central limit theorem 
%and we do not have a proof for it. Details about mod-Poisson convergence can be found for instance in 
%\cite{KoNi09b}.\\[2mm]
%\end{color}

\subsection{Cumulative cycle numbers}
Let 
\[
K_m = \sum_{j=1}^m C_j,
\]
be the number of cycles with lengths less than $m$. 
Since no cycle can be 
larger than $\alpha(n)$, the total number of cycles  
$K_{\alpha(n)}$ is at least $\geq n/\alpha(n)$.
In \cite{BS17} it is shown that indeed  
$K_{\alpha(n)} \sim \frac{n}{\alpha(n)}$, and so the random 
variable $\frac{K_{m(n)}}{n / \alpha(n)}$ gives the fraction of 
cycles that have length up to $m(n)$. 
The regime in which this fraction converges to a finite limit
will be given by 
\begin{equation} 
b_t(n)
:=
\max\left\{ \alpha\left(n\right)+\left\lfloor \log\left(t\right)\frac{\alpha\left(n\right)}{\log\left(\frac{n}{\alpha\left(n\right)}\right)}\right\rfloor ,0\right\}, \qquad 0\leq t \leq 1.
\label{eq:kDef}	
\end{equation}
We  have the following limit shape 
of the random function $t \mapsto K_{b_t(n)}$: 
\begin{thm}
\label{thm:Shape}%\comment{schreiben}
We have for each $\epsilon>0$, 
\begin{align}
\mathbb{P}_{n,\alpha} \left[ \sup_{t\in{[0,1]}}\left|\frac{K_{b_{t}\left(n\right)}}{n/\alpha(n)} -t \right| >\epsilon \right]
 \to 0
 \text{ as } n\to\infty.
\end{align}
\end{thm}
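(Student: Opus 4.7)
I would prove pointwise convergence in probability of $K_{b_t(n)}/(n/\alpha(n))$ to $t$ for each fixed $t \in [0,1]$ by the second moment method, and then upgrade to uniform convergence via monotonicity of $t \mapsto K_{b_t(n)}$ together with the continuity of the deterministic limit $t \mapsto t$.

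For the first moment, Proposition~\ref{mean} (used with care to ensure uniformity in $m$) identifies $\mathbb{E}_{n,\alpha}[K_{b_t(n)}]$ with $\sum_{j=1}^{b_t(n)} x_{n,\alpha}^j/j$ to leading order. This is a geometric-type sum with ratio $x_{n,\alpha} \to 1^+$, so the tail dominates and the sum is of order $x_{n,\alpha}^{b_t(n)}/(b_t(n)(x_{n,\alpha}-1))$. Inserting the expansion of $\log x_{n,\alpha}$ from \eqref{mu asymptotics} together with the specific form of $b_t(n)$ in \eqref{eq:kDef} (which is engineered precisely so that $x_{n,\alpha}^{b_t(n)} \sim t \cdot x_{n,\alpha}^{\alpha(n)}$), one obtains $b_t(n)\sim \alpha(n)$, $x_{n,\alpha}-1 \sim \log(n/\alpha)/\alpha$, and finally $\mathbb{E}_{n,\alpha}[K_{b_t(n)}] \sim t \cdot n/\alpha(n)$, as wanted.

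For the variance, I would use the saddle-point machinery of Section~\ref{sec:Generating} which underlies Theorems~\ref{main thm 1}--\ref{main thm 3}. Applied to the factorial-moment generating function
\[
\sum_{n\geq 0}\frac{x^n}{n!}\sum_{\sigma\in S_{n,\alpha}}\prod_j z_j^{C_j(\sigma)} \;=\; \exp\!\Bigl(\sum_{j=1}^{\alpha} \frac{z_j x^j}{j}\Bigr),
\]
this machinery should deliver the quasi-Poisson asymptotics $\mathbb{E}_{n,\alpha}[C_m(C_m-1)] \sim \mu_m^2$ and $\mathbb{E}_{n,\alpha}[C_m C_{m'}] \sim \mu_m \mu_{m'}$ for distinct $m,m'\leq\alpha(n)$, giving $\mathrm{Var}_{n,\alpha}(C_m) = \caO(\mu_m)$ with negligible covariances, hence $\mathrm{Var}_{n,\alpha}(K_{b_t(n)}) = \caO(\mathbb{E}_{n,\alpha}[K_{b_t(n)}]) = \caO(n/\alpha)$. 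Chebyshev's inequality then yields $\mathbb{P}_{n,\alpha}[|K_{b_t(n)}/(n/\alpha) - t| > \epsilon] = \caO(\alpha/(\epsilon^2 n)) \to 0$, since $\alpha(n) \leq n^{a_2}$ with $a_2<1$.

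To pass from pointwise to uniform convergence, fix $\epsilon>0$ and a partition $0 = t_0 < t_1 < \cdots < t_N = 1$ of mesh less than $\epsilon/3$. By the pointwise bound and a union bound, $|K_{b_{t_i}(n)}/(n/\alpha) - t_i| < \epsilon/3$ for all $i$ with probability tending to $1$. Since $m \mapsto K_m$ and $t \mapsto b_t(n)$ are both non-decreasing, for $t \in [t_{i-1},t_i]$ the quantity $K_{b_t(n)}/(n/\alpha)$ is sandwiched between $K_{b_{t_{i-1}}(n)}/(n/\alpha)$ and $K_{b_{t_i}(n)}/(n/\alpha)$, and comparing with $t$ (using $|t - t_{i-1}|,|t - t_i|< \epsilon/3$) gives $|K_{b_t(n)}/(n/\alpha) - t|<\epsilon$ uniformly in $t$. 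The main obstacle is the variance step: Theorem~\ref{main thm 2} only yields joint Poisson structure on the scale $o(\alpha/\log n)$, so the second factorial moment must be computed directly by saddle-point at the scale $m$ of order $\alpha(n)$, in the spirit of Proposition~\ref{mean} but for $\partial^2_{z_m}$ of the generating function evaluated at $z\equiv 1$.
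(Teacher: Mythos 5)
Your route is correct in outline but genuinely different from the paper's. The paper proves pointwise concentration by computing the joint moment generating function of the $K_{b_{t_i}(n)}$ directly: it plugs the tilt into \eqref{eq:joint_limitshape}, which amounts to applying Proposition~\ref{prop:GenSaddle} with a tilted admissible array $q_{j,n}=\mathrm{e}^{\sum_l s_l/\gamma(n)}$ (Lemmata~\ref{lem:NewSaddle}, \ref{lem:Lambda2}), expands $h_n$ around $\boldsymbol{0}$, and obtains $M_{n,n/\alpha}(\boldsymbol{s})\to\exp(\sum_i s_it_i)$ in \eqref{eq:Shape}; convergence of the MGF to a degenerate limit then gives convergence in probability at each $t_i$, and the final discretization/monotonicity step is exactly the one you describe (the paper uses mesh $\epsilon/2$, following \cite{CiZe13}). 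You instead propose a second-moment/Chebyshev argument at fixed $t$, built on uniform-in-$m$ asymptotics $\bbE_{n,\alpha}[C_m]\sim\mu_m$, $\bbE_{n,\alpha}[C_mC_{m'}]\sim\mu_m\mu_{m'}$, $\bbE_{n,\alpha}[C_m(C_m-1)]\sim\mu_m^2$, which indeed follow from Proposition~\ref{prop:GenSaddle} with $f_n(z)=z^{m+m'}/(mm')$ (the error there is uniform in $m,m'\le\alpha(n)$ since it depends on $f_n$ only through $K,N$ and $|\!|\!|f_n|\!|\!|_n\le 2\theta_n\alpha(n)\to0$), together with $\sum_{j\le b_t(n)}x_{n,\alpha}^j/j\sim t\,n/\alpha(n)$, which is precisely the paper's Lemma~\ref{lem:partLambda0} and which your geometric-sum heuristic reproduces correctly. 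Your approach is more elementary (no tilted saddle point $x_n(\boldsymbol{s})$, no analysis of $h_n$ and its derivatives, no Curtiss/Yakymiv MGF-convergence theorem), but it is specific to the shape theorem; the paper's MGF computation is reused verbatim for the Brownian-bridge fluctuations in Theorem~\ref{thm:Fluc}, so Theorem~\ref{thm:Shape} comes essentially for free there.

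One quantitative claim in your plan should be weakened: the "$\sim$" asymptotics for the mixed and factorial moments give only a relative error $o(1)$ (of order $(\alpha/n)^{5/12}$ from Proposition~\ref{prop:GenSaddle}), so they yield
\begin{equation*}
\mathrm{Var}_{n,\alpha}\bigl(K_{b_t(n)}\bigr)\;=\;o\Bigl(\bigl(n/\alpha(n)\bigr)^2\Bigr)+\caO\bigl(n/\alpha(n)\bigr),
\end{equation*}
but not the stated bound $\caO(n/\alpha(n))$: the error term $o(1)\cdot\bigl(\sum_{m\le b_t}\mu_m\bigr)^2$ can dominate $n/\alpha(n)$, and negative covariances cannot be certified at that precision by these asymptotics alone (indeed the true variance is $\sim t(1-t)n/\alpha$, which reflects covariance contributions of the same order as $\sum_m\mu_m$). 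This is not a gap for your purpose, since Chebyshev only needs $\mathrm{Var}(K_{b_t(n)})=o((n/\alpha(n))^2)$, but the claimed rate $\caO(\alpha/(\epsilon^2 n))$ in your Chebyshev bound is not justified by the stated ingredients.
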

%
%\begin{color}{red}
When we choose $t=1$ in Theorem~\ref{thm:Shape}, then $b_t(n) = \alpha(n)$ and we 
recover the result in  \cite{BS17}. Furthermore, if we define 
\begin{align}
 \nu_\epsilon := \lim_{n\to\infty} \frac{K_{b_{\epsilon}\left(n\right)}}{K_{\alpha\left(n\right)}} 
 \ \text{ for } \ \epsilon>0
\end{align}
then $\nu_\epsilon$ can be interpreted as the limit as $n\to\infty$ of the fraction of cycles smaller than $b_\epsilon(n)$.
Theorem~\ref{thm:Shape} now shows that $\nu_\epsilon \to \epsilon$ for all $0< \epsilon \leq 1$.
Since $b_\epsilon(n) = \alpha(n)\big(1+o(1) \big)$ for all $\epsilon>0$, 
we immediately get that almost all cycles live in a scale of the form $\alpha(n)\big(1+o(1) \big)$.

A theorem similar to Theorem~\ref{thm:Shape} can be proved for the number of indices. If we set 
$S_{m} = \sum_{j=1}^m j C_j$, then trivially $S_\alpha = n$, and 
we can show that 
\begin{align}
\mathbb{P}_{n,\alpha} \left[ \sup_{t\in{[0,1]}}\left|\frac{S_{b_{t}\left(n\right)}}{n} -t \right| >\epsilon \right] \label{eq:IShape}
 \to 0
 \text{ as } n\to\infty.
\end{align}
The proof, which is similar to the proof of Theorem~\ref{thm:Shape}, can be found in \cite[Theorem 2.7.2]{S18}.
In the next theorem we take a closer look at the fluctuations about the limit shape of $K_{b_t(n)}$.
%\end{color}
%
%\begin{color}{blue}
%{\bf Remarks:}\\[2mm]
%1.) When $t=1$, then $b_t(n) = \alpha(n)$, so we 
%recover the fact that there are asymptotically exactly $n / 
%\alpha(n)$ cycles in the permutation.  
%The limit $t \to 0$ shows that no positive fraction of 
%cycles live below the scale defined by \eqref{eq:kDef}.
%\\[1mm]
%2.) Numerical simulations indicate that convergence in Theorem \ref{thm:Shape} is significantly faster if we substitute $\tilde{b}_t(n)
%:=
%\max\left\{ \alpha\left(n\right)+\left\lfloor \log\left(t\right)\frac{\alpha\left(n\right)}{\log\left(\frac{n}{\alpha\left(n\right)}\log\left(\frac{n}{\alpha(n)}\right)\right)}\right\rfloor ,0\right\}$
%for $b_t(n)$. But since we do not have precise error terms justifying this choice, we stick with the simpler form given in Equation \eqref{eq:kDef}.
%\\[1mm]
%3.) A similar theorem can be proved for the number of indices 
%instead of the number of cycles. If we set 
%$S_{m} = \sum_{j=1}^m j C_j$, then trivially $S_\alpha = n$, and 
%we can prove that 
%\begin{align}
%\mathbb{P}_{n,\alpha} \left[ \sup_{t\in{[0,1]}}\left|\frac{S_{b_{t}\left(n\right)}}{n} -t \right| >\epsilon \right] \label{eq:IShape}
% \to 0
% \text{ as } n\to\infty.
%\end{align}
%%\comment{Since the proof is very similar to the one of Theorem
%%\ref{thm:Shape}, we will not give it. (OR MAYBE WE WILL...).}
%\\[1mm]
%As before, we can subtract the mean and renormalize. The 
%result is the following functional central limit theorem for 
%the fluctuations of the function $K_{b_t(n)}$: 
%\end{color}
\begin{thm}
\label{thm:Fluc}
Let
%\begin{color}{blue}
% \[
%L_{t}\left(n\right):=\frac{K_{b_{t}\left(n\right)}-\bbE [ K_{b_{t}(n)}]}{\sqrt{n/\alpha\left(n\right)}}.
%\]
%\end{color}
%\begin{color}{red}
\begin{equation}\label{eq:FlukDef}
L_{t}\left(n\right):=\frac{K_{b_{t}\left(n\right)}-\sum_{j=1}^{b_{t}\left(n\right)}\frac{x^{j}_{n,\alpha}}{j}}{\sqrt{n/\alpha\left(n\right)}}.
\end{equation}
%\end{color}
Then $\left(L_{t}\left(n\right)\right)_{t\in\left[0,1\right]}$ converges 
%\begin{color}{red}
 in distribution to the standard 
%\end{color}
 Brownian bridge in $\mathcal{D}\left[0,1\right]$, where $\mathcal{D}\left[0,1\right]$ is the space of cadlag functions on $[0,1]$, endowed with the Skorohod topology.
\end{thm}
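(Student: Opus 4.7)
The proof follows the standard two-step strategy for functional limit theorems in $\caD[0,1]$: I would establish convergence of finite-dimensional distributions, and then tightness. Since the Brownian bridge has continuous paths, the combination yields weak convergence in the Skorohod topology. The underlying representation is that under $\bbP_{n,\alpha}$ the joint law of $(C_1,\ldots,C_{\alpha(n)})$ equals that of independent Poisson variables $Z_j$ with parameters $\mu_j=x_{n,\alpha}^{j}/j$, conditioned on $\sum_{j} j Z_j=n$; the saddle-point machinery of Section~\ref{sec:Generating} is the quantitative tool replacing this heuristic conditioning.

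For the finite-dimensional distributions, fix $0 \leq t_1 < \cdots < t_k \leq 1$ and $\lambda_1,\ldots,\lambda_k\in\R$, and write
\begin{equation*}
\sum_{j=1}^{k} \lambda_j L_{t_j}(n) = \frac{1}{\sqrt{n/\alpha(n)}} \sum_{m=1}^{\alpha(n)} a_m \bigl(C_m - \mu_m\bigr),\qquad a_m := \sum_{j:\, b_{t_j}(n) \geq m} \lambda_j.
\end{equation*}
Its characteristic function can be extracted from the generating function identity
\begin{equation*}
\bbE_{n,\alpha}\!\Bigl[\prod_{m=1}^{\alpha(n)} u_m^{C_m}\Bigr] = \frac{[z^n] \exp\bigl(\sum_{m=1}^{\alpha(n)} u_m z^m/m\bigr)}{[z^n] \exp\bigl(\sum_{m=1}^{\alpha(n)} z^m/m\bigr)}
\end{equation*}
with $u_m = \exp(i a_m/\sqrt{n/\alpha(n)})$, via saddle-point analysis at $z = x_{n,\alpha}$. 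Expanding to second order in $1/\sqrt{n/\alpha(n)}$ and correcting for the shift in the saddle induced by the perturbation, the characteristic function converges to $\exp(-\tfrac12 \sigma^2)$ with
\begin{equation*}
\sigma^2 = \lim_{n\to\infty}\frac{\alpha(n)}{n}\!\left(\sum_{m} a_m^2 \mu_m - \frac{(\sum_m m\, a_m \mu_m)^2}{\sum_m m^2 \mu_m}\right).
\end{equation*}
Using the asymptotics $\sum_{m\leq b_t(n)} \mu_m \sim t\, n/\alpha(n)$ (consistent with Theorem~\ref{thm:Shape}), $\sum_{m\leq b_t(n)} m\mu_m \sim tn$ (consistent with \eqref{eq:IShape}), and $\sum_{m\leq \alpha(n)} m^2 \mu_m \sim \alpha(n)\, n$, a short computation reduces $\sigma^2$ to $\sum_{i,j}\lambda_i\lambda_j\bigl(\min(t_i,t_j)-t_it_j\bigr)$, the Brownian-bridge covariance. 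The subtracted term is precisely the contribution of the constraint $\sum m C_m = n$, and it forces $\sigma^2\to 0$ at $t=1$, reflecting the bridge condition.

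For tightness in $\caD[0,1]$ I would apply the classical moment criterion: show that for all $0\leq s\leq t\leq u\leq 1$, uniformly in $n$,
\begin{equation*}
\bbE_{n,\alpha}\!\bigl[(L_t(n) - L_s(n))^2 (L_u(n) - L_t(n))^2\bigr] \leq C\,(u-s)^{2}.
\end{equation*}
The fourth joint moment is again extracted from the generating function by a saddle-point computation, using that conditional covariances of $K_{b_\cdot(n)}$-increments over disjoint $m$-intervals are of the appropriate sub-Gaussian order. Combined with the finite-dimensional convergence, this yields the functional CLT.

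The main technical obstacle is the uniform error control in the perturbed saddle-point expansion: the weights $a_m$ are bounded but step functions of $m$, so one must track how the perturbed saddle moves off $x_{n,\alpha}$ and verify that the quadratic contribution dominates the remainder uniformly in the test vector $(\lambda_1,\ldots,\lambda_k)$ and uniformly in the breakpoints $b_{t_1}(n)<\cdots<b_{t_k}(n)$. A secondary delicate point is the endpoint $t = 1$: the bridge condition $L_1(n) \to 0$ requires that $\bbE_{n,\alpha}[K_{\alpha(n)}]$ and $\sum_{m\leq\alpha(n)} \mu_m$ agree to $o(\sqrt{n/\alpha(n)})$, which needs a sharper saddle-point estimate of the mean than the leading-order statement in Proposition~\ref{mean}.
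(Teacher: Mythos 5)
Your proposal follows essentially the same route as the paper: finite-dimensional convergence via the perturbed generating function \eqref{eq:joint_limitshape} and the saddle-point result (Proposition~\ref{prop:GenSaddle}), with your ``shift of the saddle'' correction appearing there as the Hessian of $h_n$ in \eqref{eq:FlukZ} and Lemma~\ref{lem:Xdev}, the only cosmetic difference being that the paper works with moment generating functions on the sector $s_j\geq 0$ plus Yakymiv's Curtiss-type theorem rather than characteristic functions, and centers exactly by $\sum_{j\le b_t(n)} x_{n,\alpha}^j/j$ so no sharper estimate of the true mean is needed. Tightness is likewise proved via the same fourth-moment criterion \eqref{eq:tightnesscriterion} by a saddle-point computation; note the paper carries this out only for $t\in[\delta,1]$, deferring the behaviour near $t=0$ (where $(\log t)'=1/t$ diverges in the definition of $b_t(n)$) to \cite{S18}, which is precisely the uniformity issue your last paragraph flags.
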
 
\begin{rem}$ $
%\begin{enumerate}
% \item 
% \begin{color}{blue}
%    We did not actually subtract the mean, but rather the 
%    expression $\sum_{j=1}^{b_{t}(n)}\frac{x^{j}_{n,
%    \alpha}}{j}$. By Proposition~\ref{mean}, it is tempting to claim
%    that this is equal to $\bbE [ K_{b_{t}(n)}]$, but 
%    we have to be careful since each term in Proposition \ref{mean}
%    carries a relative error and these may pile up. However, 
%    once we have proved Theorem \ref{thm:Fluc}, 
%    we can conclude that 
%    $\bbE(L_t(n)^2)$ is bounded in $n$ for all $t$, and thus in 
%    particular converges in $L^1$. It now follows by taking 
%    expectations that we may indeed replace 
%    the sum in the statement
%    of the theorem by $\bbE[K_{b_t(n)}]$. 
% \end{color}
%    \begin{color}{red}
%     Diese Bemerkung geh\"ort in das Beweiskapitel, ich muss noch \"uberlegen wo.
%    \end{color}
%  \item 
(1)
    As above, we can do the same construction for the indices 
    instead of the cycles. With $S_m$ being as in the remark 
    after Theorem~\ref{thm:Shape}, we have that 
    \[
    \tilde{L}_{t}\left(n\right):=\frac{S_{b_{t}\left(n\right)}-\sum_{j=1}^{b_{t}\left(n\right)}x^{j}_{n,\alpha}}{\sqrt{n\alpha\left(n\right)}}
    \]
    converges to the Brownian bridge in $\mathcal{D}\left[0,1\right]$.
    The proof is similar to the one of Theorem 
    \ref{thm:Fluc}, so we refer to \cite[Theorem 2.7.6]{S18}.
%  \item
\\
(2)
    When $t=1$ in Theorem \ref{thm:Fluc}, the 
    variance of the limit is zero. However, it has been shown in \cite{BS17} that 
    there exists a different rescaling so that 
    the Gaussian fluctuations persist in the limit: We have
    \begin{align}
    \frac{K_{\alpha(n)}-\sum_{j=1}^{\alpha(n)}\frac{x_{n,\alpha}^{j}}{j} }{\sqrt{\frac{n}{\alpha(n)\left(\log\left(n/\alpha(n)\right)\right)^2}}} \stackrel{d}{\longrightarrow} \caN(0,1). \label{eq:totalCLT}
    \end{align}
    Of course, no such statement can hold for $S_{\alpha(n)}$ since 
    $S_{\alpha(n)} - \sum_{j=1}^{\alpha(n)} x_{n,\alpha}^j =S_{\alpha(n)}-n= 0$.
%  \item
\\
(3)
    For unrestricted permutations, Delaurentis and Pittel 
    \cite{DP85} show that 
    the stochastic process 
    \begin{align}
    \label{delaurentispittel}
    \left(\frac{\sum_{j=1}^{\left\lfloor n^{t}\right\rfloor }C_{j}-t\log\left(n\right)}{\sqrt{\log\left(n\right)}}\right)_{t\in\left[0,1\right]}
    \end{align}
    converges in distribution to the Brownian motion in $\left[0,1\right]$. 
    Interestingly, this holds for restricted permutations as well, 
    and we have already shown it! Indeed, the convergence 
    in total variation distance from Theorem~\ref{main thm 2}
    is strong enough to show that for all $t < a_1$ (cf.\ 
    \eqref{condition on alpha}), 
    convergence to the Brownian motion 
    also holds when the $C_j$ in \eqref{delaurentispittel} are 
    those of constrained permutations. 
    Hence, in the 
    case of constrained permutations, we actually have two functional 
    central limit theorems: one for 'short' cycles and one 
    for the ones very close to the maximal cycle length.\\
(4) The asymptotic behaviour of the longest cycles in a random permutation is one of the most frequently asked questions
and is in particular still open for random permutations with polynomially and logarithmically growing cycle weights.
%However, in this model, the asymptotic behaviour of the longest cycles is easily determined. 
We denote by $\ell_1(\sigma)$ the length of the longest cycle in a permutation, $\ell_2(\sigma)$ the length of the second longest cycle in a permutation and so on.
% 
%\begin{thm}
We have for each $k\in\N$
\begin{align}
\frac{1}{\alpha(n)} (\ell_1,\ell_2,\ldots,\ell_k)
\stackrel{d}{\longrightarrow}
(\underbrace{1,1,\ldots,1}_{\text{k times}}).
\end{align}
Further, if $\alpha(n) = \mathcal{O}(n^{1/2})$ and $\alpha\left(n\right) \geq n^{\frac{1}{7}+\delta}$ for some $\delta>0$ then
\begin{align}
 \mathbb{P}_{n,\alpha} \left[(\ell_1,\ell_2,\ldots,\ell_k) \neq \bigl(\alpha(n),\ldots,\alpha(n)\bigr) \right]
 \to 0 
 \ \text{ as }n\to\infty.
 \label{eq:longest_cycles_small_alpha}
\end{align}
%\end{thm}
%
These statements follow immediately from Theorems~\ref{main thm 3} and~\ref{thm:Shape}.
% As for Theorem~\ref{main thm 3}, we believe that this theorem is indeed true for  $\alpha(n) = O(n^{1/2})$ and $\alpha\left(n\right) \geq n^{\delta}$ for some $\delta>0$.
%\end{color}
%\end{enumerate}
\end{rem}

\section{Generating functions and the saddle-point method}
\label{sec:Generating}
Generating functions and their connection with analytic combinatorics form the backbone of the proofs in this paper. 
More precisely, we will determine formal generating functions for all relevant moment-generating functions and then 
use the saddle-point method to determine the asymptotic behaviour of these moment-generating functions as $n\to\infty$.
% In the following, we will derive a formula for a relevant moment-generating function via the saddle-point method.\\

Let $\left(a_{n}\right)_{n\in\mathbb{N}}$ be a sequence of complex numbers. Then its ordinary generating function is defined as the formal power series
\[
f\left(z\right):=\sum_{n=1}^{\infty}a_{n}z^{n}.
\]
The sequence may be recovered by formally extracting the coefficients
\[
\left[z^n\right]f\left(z\right):=a_{n}
\]
for any $n$. The first step is now to consider a special case of P{\'o}lya's Enumeration Theorem, see \cite[\S 16, p.\:17]{Po37}, which connects permutations with a specific generating function.
\begin{lem}
Let $(q_j)_{j\in\N}$ be a sequence of complex numbers. 
We then have  the following identity between formal power series in $z$,
\begin{equation}\label{eq:symm_fkt}
\exp\left(\sum_{j=1}^{\infty}\frac{q_j z^j}{j}\right)
=\sum_{k=0}^\infty\frac{z^k}{k!}\sum_{\sigma\in S_k}\prod_{j=1}^{k}
q_{j}^{C_j},
\end{equation}
where $C_j=C_j(\sigma)$ are the cycle counts. If either of the
series in \eqref{eq:symm_fkt} is absolutely convergent, then so is
the other one.
\end{lem}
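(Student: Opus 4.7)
The plan is to compare coefficients of $z^k$ on both sides. The key combinatorial input is the classical formula that the number of permutations in $S_k$ with cycle type $(c_1,c_2,\dots)$ (meaning $c_j$ cycles of length $j$, with $\sum_j j c_j = k$) equals $k! / \prod_j (j^{c_j} c_j!)$, which follows from a standard counting argument (arrange $k$ elements in a row, group them into cycles of the prescribed lengths, then divide by the rotational symmetries within each cycle and by the permutations of cycles of equal length).

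On the left-hand side, I would first rewrite the exponential of the sum as a product of exponentials and expand each factor as a Taylor series:
\begin{equation*}
\exp\!\left(\sum_{j=1}^{\infty}\frac{q_j z^j}{j}\right)
=\prod_{j=1}^{\infty}\sum_{c_j=0}^{\infty}\frac{q_j^{c_j}}{j^{c_j}\,c_j!}\,z^{jc_j}.
\end{equation*}
As a formal power series, extracting the coefficient of $z^k$ amounts to summing over all sequences $(c_1,c_2,\dots)$ of nonnegative integers with only finitely many nonzero entries and $\sum_j j c_j = k$, giving
\begin{equation*}
[z^k]\,\mathrm{LHS}
=\sum_{\substack{(c_1,c_2,\dots)\\ \sum_j j c_j=k}}\prod_{j=1}^{k}\frac{q_j^{c_j}}{j^{c_j}\,c_j!}.
\end{equation*}
On the right-hand side, grouping permutations $\sigma\in S_k$ by their cycle type and applying the counting formula above yields exactly the same expression, so the identity of formal power series follows term by term.

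For the second claim, observe that both sides, when written out as in the display above, involve the same nonnegative combinations of the $q_j$'s up to signs, and in fact the same monomials in $(q_j)$ with nonnegative coefficients once one replaces each $q_j$ by $|q_j|$. Hence absolute convergence of either rearrangement is equivalent to absolute convergence of the doubly-indexed family $\{q_j^{c_j} z^{jc_j}/(j^{c_j} c_j!)\}$, and Fubini/Tonelli for sums then allows one to rearrange freely and conclude that the two sides converge absolutely to the same value whenever either does.

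I do not expect any serious obstacles: the main content is the cycle-type counting formula, and the absolute convergence part is a straightforward Fubini argument on nonnegative series. The only mild care needed is in stating the identity as one between formal power series first, so that convergence issues are dealt with only at the end.
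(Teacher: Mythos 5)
Your proof is correct, and it is the standard argument: writing $\exp\bigl(\sum_j q_j z^j/j\bigr)$ as a product of exponential series, extracting the coefficient of $z^k$, and matching it with the grouped right-hand side via the cycle-type count $k!/\prod_j j^{c_j}c_j!$ gives the formal identity, while Tonelli/Fubini applied to the nonnegative family obtained by replacing $q_j$ with $|q_j|$ gives the convergence statement. Note that the paper itself does not prove this lemma at all --- it only cites P\'olya \cite{Po37} --- so there is no in-paper proof to compare against; your write-up simply supplies the argument that the citation stands for. One point you should make explicit in the convergence part: your asserted equivalence ``absolute convergence of either side $\Leftrightarrow$ summability of the multi-indexed family'' tacitly reads absolute convergence of the right-hand series in the strong sense, i.e.\ finiteness of $\sum_k \frac{|z|^k}{k!}\sum_{\sigma\in S_k}\prod_j |q_j|^{C_j}$ (summation over the pairs $(k,\sigma)$). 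For complex $q_j$ this is genuinely stronger than absolute convergence of the grouped series $\sum_k a_k z^k$ with $a_k=\frac{1}{k!}\sum_{\sigma}\prod_j q_j^{C_j}$, since cancellations inside the finite inner sums can make the $a_k$ much smaller than their ``absolute-value'' counterparts, and only the strong reading feeds directly into your Tonelli argument. This is the natural interpretation of the lemma (and in all applications in the paper the $q_{j}$ are nonnegative, where the two readings coincide), so it is a matter of adding one clarifying sentence rather than a gap in the proof.
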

Extracting the $n$th coefficient yields
\begin{equation}
\label{relation to perms}	
\left[z^n\right]\exp\left(\sum_{j=1}^{\infty}\frac{q_j z^j}{j}\right) = \frac{1}{n!}\sum_{\sigma\in S_n}\prod_{j=1}^{n}q_{j}^{C_j}.
\end{equation}
%
%To see why this is useful for our purposes, note that 
%when we set $q_j=1$ whenever $j \leq \alpha(n)$ and 
%$q_j = 0$ otherwise, 
Setting $q_j =\mathbbm{1}_{\left\{j\leq \alpha(n)\right\}}$
we obtain 
\begin{align}\label{eq:cNorm}
Z_{n,\alpha} := \frac{|S_{n,\alpha}|}{n!} = 
\left[z^n\right]\exp\left(\sum_{j=1}^{\alpha}\frac{z^j}{j}
\right).
\end{align}
For distinct numbers $1\leq m_k\leq \alpha(n),$ $1\leq k\leq K$ and $s_1,...,s_K\in\mathbb{R}$, we obtain
% set $q_{m_k}=\mathrm{e}^{s_k}$ and $q_j =1$ for $j \neq m_k$, we arrive at the expression
\begin{equation}
\label{eq: finite joint generating functions}
\mathbb{E}_{n,\alpha}\left[ \mathrm{e}^{\sum_{k=1}^{K}s_{k}C_{m_k} }   \right]
=
\frac{1}{Z_{n,\alpha}} \,\, \left[z^n\right]\exp\left(\sum_{k=1}^{K}(\mathrm{e}^{s_k}-1)\frac{z^{m_k}}{m_k}\right)\exp\left(\sum_{j=1}^{\alpha(n)}\frac{z^{j}}{j}\right).
\end{equation}
Similarly, for $0=t_0\leq t_1 <\dots <t_m \leq t_{m+1}=1$, we have
\begin{equation}
\mathbb{E}_{n,\alpha}\left[  \mathrm{e}^{\sum_{i=1}^{m}s_{i} K_{b_{t_i}(n)} }   \right] 
= 
\frac{1}{Z_{n,\alpha}}[z^n]\exp\left(\sum_{i=0}^{m}\sum_{j=b_{t_{i}}\left(n\right)+1}^{b_{t_{i+1}}\left(n\right)}\frac{\mathrm{e}^{\sum_{\ell=i+1}^{m}s_{\ell}}z^{j}}{j}\right).
\label{eq:joint_limitshape}
\end{equation}
%where $t_0 := 0$ and $t_{m+1} :=1$. \\
At this stage, all parameters can depend on the system size $n$. The way to extract the series coefficients from expressions 
such as \eqref{eq: finite joint generating functions} and 
\eqref{eq:joint_limitshape} is the saddle point method,  
a standard tool in asymptotic analysis. 
The basic idea is to rewrite the 
expression \eqref{relation to perms} as a complex contour integral and
choose the path of integration in a convenient way. 
The details of this procedure depend on the situation at hand 
and need to be done on a case by case basis. 
A general overview over the saddle-point method can be found 
in \cite[page~551]{Flajolet2009}. 

We now treat the most general case of the 
saddle point method that is relevant for the present situation. 
Let $\bsq = (q_{j,n})_{1 \leq j \leq \alpha(n), n \in 
\bbN}$ be a triangular array. We assume that all 
$q_{j,n}$ 
are nonnegative and define $x_{n,\bsq}$ as the unique positive solution of
\begin{align}
n = \sum_{j=1}^{\alpha(n)} q_{j,n} x_{n,\bsq}^j.\label{eq:GenSaddle}
\end{align} 
Let further
\[
\lambda_{p,n}:=\lambda_{p,n,\alpha,\boldsymbol{q}} :=\sum_{j=1}^{\alpha(n)} q_{j,n}j^{p-1}x_{n,\bsq}^j,
\]
where $p$ is a natural number. Due
to Equation (\ref{eq:GenSaddle}),
\begin{equation}
\lambda_{p,n}\leq n\left(\alpha\left(n\right)\right)^{p-1}\label{eq:LambdaP}
\end{equation}
holds for all $p \geq 1$. 

Let us write $a_n\approx b_n$ when there exist constants $c_1,c_2>0$ such that
$$
 c_1 b_n \leq a_n \leq c_2 b_n
$$
for large $n$. We further say that
$$
f_n(t)=\mathcal{O}\left(g_n(t)\right)\text{ uniformly in }t\in T_n
$$
if there are constants $c,N>0$ such that
$
\sup_{t\in T_n}\left\{\left|\frac{f_n(t)}{g_n(t)}\right|\right\}\leq c
$
for all $n\geq N$.
%Analogously, we have $f_n(t)\sim g_n(t)$ uniformly in $t\in T_n$ if
%$
%\sup_{t\in T_n}\left\{\left|\frac{f_n(t)}{g_n(t)}-1\right|\right\}\to 0
%$
%as $n\to\infty$.

We will call an 
%triangular 
array $\bsq$ {\em admissible} 
if the following three conditions are met:\\[1mm]
(i): We have 
\begin{align}\label{eq:propvor}
\alpha(n) \log x_{n,\boldsymbol{q}} \approx  \log \frac{n}{\alpha(n)}.
\end{align}
%where $a_n\approx b_n$ means that there exist constants $c,C>0$ with $c\, b_n \leq a_n \leq C\, b_n$
%for large $n$.\\[1mm]
(ii): We have 
\begin{align}\label{eq:admisible}
\lambda_{2,n} \approx n\alpha(n).
\end{align}
(iii): There exists a sequence $n \mapsto b(n)$ with $b(n) / \alpha(n) < (1 - \delta)$ for some $\delta > 0$, and such that
$q_{j,n} \geq c > 0$ for all
$j \geq b(n)$ and some constant $c > 0$. \\[1mm]
Note that condition (i) implies in particular that 
$\lim_{n \to \infty} x_{n,\bsq} = 1$. \\
Let $B_r(0)$ denote the circle with center $0$ and 
radius $r$ in the complex plane. 
We will call a sequence of complex-valued 
functions $f_n$ {\em admissible} if the 
following three conditions are met:\\
(i): There exists $\delta > 0$ such that $f_n$ is holomorphic
on $B_{x_{n,\bsq} + \delta}(0)$ for all $n$. \\
(ii): 
There exist $K,N > 0$ so that for all $n\geq N$ we have 
\begin{equation} 
	\label{admiss funct 1}
\sup_{z\in\partial B_{x_{n,\boldsymbol{q}}}\left(0\right)} \left|f_{n}\left(z\right)\right|\leq n^K
\left|f_{n}\left(x_{n,\boldsymbol{q}}\right)\right|.
\end{equation}
%for all $n \geq N$.\\ 
%We refer to the set of sequences of functions satisfying \eqref{admiss funct 1} for $K,N$ as $\mathcal{F}_{K,N}$. \\[1mm]
(iii) Let
\begin{equation}\label{def theta_n}
\theta_n := n^{-\frac{5}{12}}(\alpha\left(n\right))^{-\frac{7}{12}}.
\end{equation} For 
\begin{equation} \label{admiss funct 2}
|\!|\!| f_n |\!|\!|_n := \theta_n \sup_{|\theta| \leq \theta_n}
\frac{\left|f_{n}^{\prime}\left(x_{n,\bsq} \e{\ii \theta} \right)\right| }{\left|f_{n}\left(x_{n,\bsq}\right)\right|},
\end{equation}
we have $\lim_{n\to\infty} |\!|\!| f_n |\!|\!|_n = 0$. \\[1mm]
We are now in the position to formulate our general
saddle point result. 
\begin{prop}\label{prop:GenSaddle}
Let $\bsq$ be an admissible triangular array, and $(f_n)$
an admissible sequence of functions. Then, 
\[
\left[z^n\right] f_n(z) \exp\left(
\sum_{j=1}^{\alpha(n)}\frac{q_{j,n}}{j} z^{j} \right) = 
f_n(x_{n,\bsq}) \frac{\e{\lambda_{0,n}}}
{x_{n,\bsq}^n \sqrt{2\pi \lambda_{2,n}}} \left( 
1 + \caO \left(\frac{\alpha(n)}{n}\right) \right) \left(1 + \caO\left(|\!|\!| f_n |\!|\!|_n \right)\right).
\]
Here, the implicit constants in the error terms depend on $(f_n)_n$ only via $K,N$ in \eqref{admiss funct 1}.
\end{prop}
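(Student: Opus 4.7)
The approach is the classical saddle point method applied to the Cauchy representation
\begin{equation*}
[z^n] f_n(z) \exp\!\Big(\sum_{j=1}^{\alpha(n)} \tfrac{q_{j,n}}{j} z^j\Big)
= \frac{1}{2\pi} \int_{-\pi}^{\pi} f_n(x\e{\ii\theta}) \, \exp\!\Big(\sum_{j=1}^{\alpha(n)} \tfrac{q_{j,n}}{j} x^j \e{\ii j\theta} - \ii n\theta\Big) \, x^{-n}\, \dd\theta,
\end{equation*}
where $x := x_{n,\bsq}$. I would split the integration domain into the central arc $\{|\theta|\leq \theta_n\}$ and its complement, with $\theta_n$ as in \eqref{def theta_n}, and show that the central arc produces the asserted main term while the complement contributes negligibly.

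\textbf{Central arc.} Let $g(\theta) := \sum_j \tfrac{q_{j,n}}{j} x^j \e{\ii j\theta} - \ii n\theta$. Then $g(0)=\lambda_{0,n}$, $g'(0)=\ii(\sum_j q_{j,n} x^j - n) = 0$ by the saddle-point equation \eqref{eq:GenSaddle}, $g''(0) = -\lambda_{2,n}$, and $|g^{(p)}(\theta)| \leq \lambda_{p,n} \leq n\,\alpha(n)^{p-1}$ by \eqref{eq:LambdaP}. The exponent $\theta_n = n^{-5/12}\alpha(n)^{-7/12}$ is calibrated so that $\lambda_{2,n}\theta_n^2 \to \infty$ (allowing completion of the Gaussian integral to the full real line at exponentially small cost) while $\lambda_{3,n}\theta_n^3 \to 0$ (controlling the cubic Taylor remainder). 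Together with condition (ii), $\lambda_{2,n}\approx n\alpha(n)$, and the parity cancellation of odd Taylor terms about $\theta=0$, this yields the multiplicative error $1+\caO(\alpha(n)/n)$. The factor $f_n$ is handled by writing $f_n(x\e{\ii\theta}) = f_n(x) + \bigl(f_n(x\e{\ii\theta}) - f_n(x)\bigr)$ and bounding the increment by $|\theta|\, x \sup_{|\varphi|\leq \theta_n}|f'_n(x\e{\ii\varphi})|$; normalising by $f_n(x)$ reproduces exactly the relative error $\caO(|\!|\!| f_n |\!|\!|_n)$ from \eqref{admiss funct 2}.

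\textbf{Tails and expected main obstacle.} On $|\theta|>\theta_n$, one uses
\[
\Bigl|\exp\!\Bigl(\sum_{j=1}^{\alpha(n)} \tfrac{q_{j,n}}{j} x^j \e{\ii j\theta}\Bigr)\Bigr| = \e{\lambda_{0,n}-\Phi(\theta)}, \qquad \Phi(\theta) := \sum_{j=1}^{\alpha(n)} \tfrac{q_{j,n}}{j} x^j (1-\cos(j\theta)),
\]
and must show that $\Phi(\theta)$ grows fast enough on the tail to dominate both the Gaussian normalisation $\sqrt{\lambda_{2,n}}$ and the polynomial bound $n^K$ on $|f_n|$ from \eqref{admiss funct 1}. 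This is where I expect the main difficulty, and it naturally splits into two sub-regimes. For moderate $\theta$ with $|\theta|\alpha(n)\leq 1$, the pointwise inequality $1-\cos(j\theta)\geq (j\theta)^2/\pi^2$ gives $\Phi(\theta)\gtrsim \theta^2\lambda_{2,n}$, which already at $|\theta|=\theta_n$ grows as a positive power of $n$. For $|\theta|\alpha(n)>1$ the argument is subtler: one partitions $[b(n),\alpha(n)]$ according to $\|j\theta/(2\pi)\|$ and uses a Weyl/Vinogradov-type counting argument to conclude that a positive fraction of the indices in that interval contribute order unity to $1-\cos(j\theta)$. Here admissibility condition (iii) on $\bsq$ (uniform lower bound $q_{j,n}\geq c$ for $j\geq b(n)$) and condition (i) (which forces $x^j$ to stay of a definite order throughout the interval) are both used essentially. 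The resulting lower bound grows as a positive power of $n/\alpha(n)$ and hence exponentially dominates the factor $n^K$, making the tail negligible; combined with the central-arc estimate, this yields Proposition~\ref{prop:GenSaddle}.
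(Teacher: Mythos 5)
Your overall route is the same as the paper's: Cauchy's formula on the circle of radius $x_{n,\bsq}$, a split at $\theta_n$ from \eqref{def theta_n}, a third/fourth-order Taylor expansion of the exponent on the central arc with the odd term killed by parity and the remainders bounded by $\lambda_{3,n}^2/\lambda_{2,n}^3$ and $\lambda_{4,n}/\lambda_{2,n}^2 = \caO(\alpha/n)$, and the treatment of $f_n$ by $f_n(x\e{\ii\theta})-f_n(x)=\caO\bigl(\theta_n\sup_{|\varphi|\le\theta_n}|f_n'(x\e{\ii\varphi})|\bigr)$, which is exactly how the paper produces the factor $1+\caO(|\!|\!| f_n|\!|\!|_n)$. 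The near-tail $\theta_n\le|\theta|\le\pi/\alpha$ is also fine (the paper uses monotonicity of $-\Re g_n$ instead of your pointwise bound $1-\cos u\ge 2u^2/\pi^2$, but both give $\Phi(\theta)\gtrsim\theta_n^2\lambda_{2,n}\approx(n/\alpha)^{1/6}$, which beats $n^K$ under \eqref{condition on alpha}).

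The genuine gap is in your far-tail argument for $|\theta|\alpha>1$. Your parenthetical claim that condition (i) ``forces $x^j$ to stay of a definite order throughout the interval'' is false: on $[b(n),\alpha]$ the weight $x^j$ varies by a factor $x^{\alpha-b(n)}\ge x^{\delta\alpha}\approx(n/\alpha)^{c_1\delta}$, so the weighted sum $\Phi(\theta)$ is dominated by indices within $\caO(1/\log x)\approx\alpha/\log(n/\alpha)$ of $\alpha$. Consequently ``a positive fraction of the indices in $[b(n),\alpha]$ contribute order unity to $1-\cos(j\theta)$'' is not the relevant statement, and in the regime $\pi/\alpha<|\theta|\lesssim\log(n/\alpha)/\alpha$ it even fails where it matters: for $\theta=2\pi k/\alpha$ with $k$ a small integer, the phases $j\theta$ for $j$ in the top window of length $1/(x-1)$ stay within $\caO(k/\log(n/\alpha)^{0})\cdot(x-1)\alpha^{-1}$--neighbourhoods of $2\pi\bbZ$, i.e.\ they do not equidistribute on that scale, and the indices where $1-\cos(j\theta)$ is of order one carry exponentially smaller weight $x^j\le x^\alpha (n/\alpha)^{-c\delta}$. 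A naive count therefore only yields a lower bound of the form $(n/\alpha)^{c_1-c'\delta}$, which need not be a positive power, so it does not automatically dominate $n^K$ from \eqref{admiss funct 1}. What is needed (and what the paper does) is a quantitative bound that keeps the damping factor explicitly: writing $-\Re g_n(\theta)\ge c\,r_n(\theta)$ with $r_n$ as in \eqref{r_n}, the explicit geometric-sum evaluation (quoted from Lemma 12 of Manstavi\v{c}ius--Petuchovas) gives $r_n(\theta)\gtrsim \frac{x^{\alpha+1}}{\alpha(x-1)}\cdot\frac{\theta^2}{(x-1)^2+\theta^2}$, which after \eqref{eq:propvor} is $\gtrsim x^{\alpha}(\log\tfrac n\alpha)^{-3}$, see \eqref{b zwischen} --- a logarithmic, not polynomial, loss, hence still superpolynomially dominant over $n^K$; the case of general $b(n)$ is then handled by subtracting the first $b(n)$ terms and using $b(n)/\alpha\le 1-\delta$. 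Your sketch can be repaired along these lines (localise the counting to the window $|\alpha-j|\le 1/(x-1)$ and accept $1-\cos(j\theta)\gtrsim\bigl(\theta/(x-1)\bigr)^2$ there), but as written the Weyl-type step would not go through.
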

%\begin{rem}\label{rem:SattelUni}
%As the proof will show, if an array $\boldsymbol{q}$ and constants $K,N>0$ have been fixed, the error terms in Proposition \ref{prop:GenSaddle} are uniform in $(f_n)_n\in\mathcal{F}_{K,N}$.
%\end{rem}
\begin{proof}
Cauchy's integral formula gives
\begin{equation} \label{eq:cauchy} 
M_n := \left[z^n\right] f_n(z) \exp\left(
\sum_{j=1}^{\alpha(n)}\frac{q_{j,n}}{j} z^{j} \right) = 
\frac{1}{2\pi\mathrm{i}}\int_{\partial B_{r}\left(0\right)}
f_{n}\left(z\right)\exp\left(\sum_{j=1}^{\alpha(n)}
\frac{q_{j,n}}{j}z^j\right)\frac{\mathrm{d}z}{z^{n+1}}
\end{equation}
for any $r$ such that $f_n$ is holomorphic on $B_r(0)$. 
%By \eqref{eq:propvor}, we know
%\[
%\lim_{n\rightarrow\infty}x_{n,\boldsymbol{k}}=1,
%\]
%and thus c
Condition (i) on $f_n$ guarantees that we can take 
$r = x_{n,\bsq}$
% for large enough $n$
. We then rewrite 
\begin{align*}
M_{n} = \frac{1}{2\pi x_{n,\boldsymbol{q}}^{n}}\int_{-\pi}^{\pi}f_n\left(x_{n,\boldsymbol{q}}\mathrm{e}^{\mathrm{i}\theta}\right)\exp\left(\sum_{j=1}^{\alpha\left(n\right)}\frac{q_{j,n}}{j}\left(x_{n,\boldsymbol{q}}\mathrm{e}^{\mathrm{i}\theta}\right)^{j}-\mathrm{i}n\theta\right)\mathrm{d}\theta.
\end{align*}
For the remainder of the proof, we will write $x$ instead of $x_{n,\boldsymbol{q}}$ and $\alpha$ instead of $\alpha(n)$ 
for lighter notation. We define 
\begin{equation}
g_{n}\left(\theta\right):=\sum_{j=1}^{\alpha\left(n\right)}q_{j,n}\frac{\mathrm{e}^{\mathrm{i}j\theta}-1}{j}x^{j}-\mathrm{i}n\theta\label{eq:gNdef}
\end{equation}
and obtain
\begin{align*}
M_{n}
&=
\frac{\exp\left(\sum_{j=1}^{\alpha\left(n\right)}\frac{q_{j,n}}{j}x^{j}\right)}{2\pi x^{n}}\int_{-\pi}^{\pi}f_n\left(x\mathrm{e}^{\mathrm{i}\theta}\right)\exp\left(g_{n}\left(\theta\right)\right)\mathrm{d}\theta.
\end{align*}
%It thus remains to show that 
%$$
%\int_{-\pi}^{\pi}f_n\left(x\mathrm{e}^{\mathrm{i}\theta}\right)\exp\left(g_{n}\left(\theta\right)\right)\mathrm{d}\theta
%\sim
%f_n(x)\sqrt{\frac{2\pi}{\lambda_{2,n}}}
%$$
%and to calculate the relative error terms. 
Note that 
 $g_n(0)=g^{\prime}_n(0) = 0$, $g^{(p)}_n(0) = \mathrm{i}^p
\lambda_{p,n}$, and $\left|g^{(p)}_n(\theta) \right| \leq 
\lambda_{p,n}$. 

For $|\theta| \leq \theta_n$ (see \eqref{def theta_n}), equation 
(\ref{eq:LambdaP}) implies that 
$\lambda_{p,n} |\theta|^p \leq  (n/\alpha)^{1 - 5p/12}$. 
Therefore a Taylor expansion around $0$ gives 
\begin{align*}
g_{n}(\theta) = -\frac{\lambda_{2,n}}{2}\theta^{2}
-\mathrm{i}\frac{\lambda_{3,n}}{6}\theta^3
%+\caO\left(\lambda_{3,n}\left|\theta\right|^{3}\right)
+\caO \left(\lambda_{4,n}\theta^{4}\right)
\end{align*}
and 
\begin{align}\label{eq:gN}
\begin{split}
\exp (g_n(\theta)) & = \exp \left(-\tfrac{\lambda_{2,n}}{2} \theta^2 \right)
%\left(1+ \caO\left(\lambda_{3,n}\left|\theta\right|^{3}\right) \right). 
\Big(1-\mathrm{i}\frac{\lambda_{3,n}}{6}\theta^{3}+ \caO
\left(\lambda_{3,n}^{2}\theta^{6}\right)\Big) 
\Big( 1 + \caO(\lambda_{4,n} \theta^4) \Big),
%& = \exp \big(-\tfrac{\lambda_{2,n}}{2} \theta^2 \big) 
%\Big( 1 - \mathrm{i}\frac{\lambda_{3,n}}{6}\theta^{3} + 
%\caO \big( (\tfrac{n}{\alpha})^{-1/2} \big) \Big).
\end{split}
\end{align}
where the error terms are uniform in $\theta\in [-\theta_n,\theta_n]$.
As for $f_n$, we have 
\[
f_n(x \e{\ii\theta}) = f_n(x) + \ii \int_0^\theta 
f'_n(x \e{\ii \varphi}) x \e{\ii \varphi} \, \dd \varphi.
\]
Estimating the modulus of the integrand in the second term by its maximum
and using assumption \eqref{admiss funct 2}, we find that, uniformly in  
$\theta\in [-\theta_n,\theta_n]$, 
\[
f_n(x \e{\ii \theta}) = f_n(x) 
\left(1 + \caO\left(|\!|\!| f_n |\!|\!|_n\right)\right).
\]
Here, the implicit constant in $\caO\left(|\!|\!| f_n |\!|\!|_n\right)$ is independent of $(f_n)_n$.
Putting things together, we have 
\begin{align*}
\int_{-\theta_n}^{\theta_n} f_n\left(x\mathrm{e}^{\mathrm{i}
\theta}\right)\exp\left(g_{n}\left(\theta\right)\right)
\mathrm{d}\theta 
= & f_n(x)
\int_{-\theta_n}^{\theta_n} \e{- \frac{\lambda_{2,n} \theta^2}{2}}
\left(1+\caO\left(\lambda_{3,n}^{2}\theta^{6} + \lambda_{4,n} \theta^4  \right) \right)
%\left(1+\left(|\!|\!| f_n |\!|\!|_n\right)\right)
%\left(1 + \caO\left(\lambda_{3,n}\left|\theta\right|^{3}\right)\right)
\, \dd \theta\\
&+f_n(x)
\int_{-\theta_n}^{\theta_n}\e{- \frac{\lambda_{2,n} \theta^2}{2}} 
\caO\left(|\!|\!| f_n |\!|\!|_n\right)
\dd \theta.
\end{align*}
By \eqref{eq:admisible}, $\lambda_{2,n} \theta_n^2 \approx n^{1/6} \alpha^{-1/6}$, which diverges as $n \to \infty$. 
The standard estimate on Gaussian tails gives that for all 
$m \in \bbN$, 
\[
\int_{-\theta_n}^{\theta_n} \e{- \frac{\lambda_{2,n} \theta^2}{2}}\, \dd \theta = \int_{-\infty}^{\infty} \e{- \frac{\lambda_{2,n} \theta^2}{2}}\, \dd \theta + 
\caO ( \lambda_{2,n}^{-m}) = \frac{\sqrt{2\pi}}
{\sqrt{\lambda_{2,n}}} + \caO( \lambda_{2,n}^{-m}).
\]
A scaling argument, \eqref{eq:LambdaP} and assumption 
\eqref{eq:admisible} give  
\[
\int_{-\theta_n}^{\theta_n} \e{- \frac{\lambda_{2,n} \theta^2}{2}} \lambda_{3,n}^2 |\theta|^6 \, \dd \theta \leq 15
\frac{\sqrt{2\pi}}
{\sqrt{\lambda_{2,n}}} \frac{\lambda_{3,n}^2}
{\lambda_{2,n}^{3}} = \frac{\sqrt{2\pi}}
{\sqrt{\lambda_{2,n}}} \caO\left(\tfrac{\alpha}{n}\right)
\]
and
\[
\int_{-\theta_n}^{\theta_n} \e{- \frac{\lambda_{2,n} \theta^2}{2}} \lambda_{4,n} |\theta|^4 \, \dd \theta \leq 3
\frac{\sqrt{2\pi}}
{\sqrt{\lambda_{2,n}}} \frac{\lambda_{4,n}}
{\lambda_{2,n}^{2}} = \frac{\sqrt{2\pi}}
{\sqrt{\lambda_{2,n}}} \caO\left(\tfrac{\alpha}{n}\right).
\]
Altogether, we find that 
\[
\int_{-\theta_n}^{\theta_n} f_n\left(x\mathrm{e}^{\mathrm{i}
\theta}\right)\exp\left(g_{n}\left(\theta\right)\right)
\mathrm{d}\theta = f_n(x) 
\sqrt{\frac{2 \pi}{\lambda_{2,n}}} \left(1 + \caO\left( \tfrac{\alpha}{n}\right) \right) 
(1 + \caO(|\!|\!| f_n |\!|\!|_n).
\]
What remains to be shown is that
\begin{equation}
\label{outside area}
\int_{\left|\theta\right|\geq\theta_n}f_n\left(x\mathrm{e}^{\mathrm{i}
\theta}\right)\exp\left(g_{n}\left(\theta\right)\right)\mathrm{d}\theta=
\caO\left(f_n(x)
\frac{\alpha\left(n\right)}{n\sqrt{\lambda_{2,n}}}\right),
\end{equation}
where the implicit error term only depends on $(f_n)_n$ via $K,N$.
%Let us first assume that $q_{j,n}\geq c>0$ for all $n$ and $j$,
%i.e.\ $b(n) = 1$ in assumption (iii). 
We have
$
- \Re g_n(\theta) = \sum_{j=1}^{\alpha} \frac{q_{j,n}}{j} 
( 1 - \cos(j \theta)) x^j .
%\geq \frac{c}{\alpha} \sum_{j=1}^{\alpha} 
%( 1 - \cos(j \theta)) x^j =: c r_n(\theta).
$
For $\theta_n \leq \theta < \pi/\alpha$, due to $-\partial_\theta \Re g_n(\theta) > 0$, we have 
\begin{equation}\label{eq:Aussen1} 
- \Re g_n(\theta) \geq - \Re g_n(\theta_n) \approx \theta_n^2 \lambda_{2,n} \approx 
\Big( \frac{n}{\alpha} \Big)^{1/6}
\end{equation}
by assumption \eqref{eq:admisible}. 
For $\theta > \frac{\pi}{\alpha}$, let us first assume that $q_{j,n}\geq c>0$ for all $n$ and $j$,
i.e.\ $b(n) = 1$ in assumption (iii). 
We use that 
\[
- \Re g_n(\theta) = \sum_{j=1}^{\alpha} \frac{q_{j,n}}{j} 
( 1 - \cos(j \theta)) x^j 
\geq \frac{c}{\alpha} \sum_{j=1}^{\alpha} 
( 1 - \cos(j \theta)) x^j =: c r_n(\theta)
\]
and
\begin{equation}
\label{r_n}
\begin{split}
r_n(\theta) & = \frac{1}{\alpha} \Big( x \frac{x^{\alpha} - 1}{x-1} - \Re \Big( 
x \e{\ii \theta} \frac{x^{\alpha} \e{\ii \theta \alpha} - 1}
{x \e{\ii \theta} - 1} \Big) \Big)   
 \geq 
\frac{2}{\pi^{2}}\frac{x^{\alpha+1}}{\alpha\left(x-1\right)}
\frac{\theta^{2}}{\left(x-1\right)^{2}+\theta^{2}} -\frac{2x}{\alpha
\left(x-1\right)}.
\end{split}
\end{equation}
The calculations for the final inequality can e.g.\ be found in 
\cite[Lemma 12]{manstavivcius2016local}. By \eqref{eq:propvor}, there 
exist $c_1,c_2 > 0$ with 
$
c_1 \log \frac{n}{\alpha} \leq \alpha \log x 
\leq c_2 \log \frac{n}{\alpha}.
$
Thus $x \sim 1$, and $x-1 \sim \log x \geq \frac{c_1}{\alpha} 
\log \frac{n}{\alpha}$. 
So the second term on the right hand side of \eqref{r_n} 
converges to zero. 
For the first term, we use that $\theta^2/((x-1)^2 + \theta^2)$ 
is monotone increasing in 
$\theta$, and find an asymptotic lower bound of the form  
\begin{equation}\label{b zwischen}
\frac{2}{\pi^{2}}\frac{x^{\alpha+1}}{c_2 \log \frac{n}{\alpha}}
\frac{\pi^2 \alpha^{-2}}{c_2^2 \alpha^{-2} \big( \log \frac{n}{\alpha} 
\big)^2 +\pi^2 \alpha^{-2}} \sim \frac{2}{c_2^3} 
\frac{x^{\alpha+1}}{\big(\log \frac{n}{\alpha}\big)^3}.
\end{equation}
Since $x^{\alpha} \geq \big( \frac{n}{\alpha} \big)^{c_1}$, 
and using condition \eqref{admiss funct 1}, we conclude that when 
$\theta \geq \theta_n$ and $n\geq N$, 
$ | f_n(x \e{\ii \theta}) \e{g_n(\theta)} |\leq n^K \left|\e{g_n(\theta)}\right|$ vanishes faster than 
all powers of $1/n$.  This shows the claim in the case $b(n) = 1$. 
For the case of general 
$b(n)$, we have 
\begin{equation}
\label{general b}
\begin{split}
- \Re g_n(\theta) & \geq \frac{1}{\alpha} \sum_{j=1}^{\alpha} 
q_{j,n} (1 - \cos (\theta j)) x^j = c r_n(\theta) + 
\frac{1}{\alpha} \sum_{j=1}^{\alpha} (q_{j,n} - c) (1 - \cos (\theta j)) x^j \\
& \geq c r_n(\theta) - \frac{2 c}{\alpha}  \sum_{j=1}^{b(n)} x^j
\geq cr_n(\theta) \left( 1 - \frac{2b(n)}{r_n(\theta)\alpha} x^{b(n)} \right).
%\geq \frac{2 c}{c_2^2} 
%\frac{x^{\alpha+1}}{\big(\log \frac{n}{\alpha}\big)^3} - \frac{2 c  b(n) }{\alpha} x^{b(n)} 
%= \frac{2 c}{c_2^3} \frac{x^{\alpha+1}}{\big(\log \frac{n}{\alpha}\big)^3} \Big( 1 -  \frac{2 c_2^3 
%(\log\frac{n}{\alpha})^3 b(n)}{\alpha} x^{b(n)- \alpha-1} \Big)
\end{split}
\end{equation}
By assumption, $b(n) / \alpha \leq 1 - \delta$ for some $\delta > 0$, and 
then 
$
x^{b(n) - \alpha} \leq \big( \tfrac{n}{\alpha} \big)^
{c_1 \frac{b(n)-\alpha}{\alpha}} \leq \big( \tfrac{n}{\alpha} \big)^{- c_1 
\delta}.
$
Thus, by applying \eqref{b zwischen}, the bracket on the right hand side of \eqref{general b} converges to 
$1$ as $n \to \infty$, and the proof is finished. 
%
%\begin{align}
%\Re g_n(\theta)= & \Re \left[\sum_{j=1}^{b(n)}c\frac{\mathrm{e}^{\mathrm{i}j\theta}-1}{\alpha(n)}x^{j} +\sum_{j=b(n)+1}^{\alpha(n)}q_j(n)\frac{\mathrm{e}^{\mathrm{i}j\theta}-1}{\alpha(n)}x^{j} 
%+\sum_{j=1}^{b(n)}q_j(n)\frac{\mathrm{e}^{\mathrm{i}j\theta}-1}{\alpha(n)}x^{j} -\sum_{j=1}^{b(n)}c\frac{\mathrm{e}^{\mathrm{i}j\theta}-1}{\alpha(n)}x^{j} \right]\nonumber\\
%\leq & \Re \left[\sum_{j=1}^{\alpha(n)}c\frac{\mathrm{e}^{\mathrm{i}j\theta}-1}{\alpha(n)}x^{j} + \sum_{j=1}^{b(n)}q_j(n)\frac{\mathrm{e}^{\mathrm{i}j\theta}-1}{\alpha(n)}x^{j} -\sum_{j=1}^{b(n)}c\frac{\mathrm{e}^{\mathrm{i}j\theta}-1}{\alpha(n)}x^{j} \right]. \label{eq:propcase2}
%\end{align}
%Due to $\left|\mathrm{e}^{\mathrm{i}j\theta}-1 \right|\leq 2$ and the assumptions of the second case, we have
%\[
%\left|\sum_{j=1}^{b(n)}q_j(n)\frac{\mathrm{e}^{\mathrm{i}j\theta}-1}{\alpha(n)}x^{j} -\sum_{j=1}^{b(n)}c\frac{\mathrm{e}^{\mathrm{i}j\theta}-1}{\alpha(n)}x^{j}\right|\leq \sum_{j=1}^{b(n)}\frac{2(q_j(n)+c)}{\alpha(n)}x^{j}\leq \sum_{j=1}^{b(n)}\frac{2(q_j(n)+c)}{\alpha(n)}x^{b(n)}=\caO(1).
%\]
%So the dominant term in Equation \eqref{eq:propcase2} is
%\[
%\Re \sum_{j=1}^{\alpha(n)}c\frac{\mathrm{e}^{\mathrm{i}j\theta}-1}{\alpha(n)}x^{j},
%\]
%and we are in the same situation as in case 1. The proposition is therefore proved.
\end{proof}

\section{Proofs of the main results}\label{sec:proofs}
We establish most of our results by computing moment generating functions. In the cases we consider, it is a consequence of \cite{Yakymiv2011} that pointwise convergence of the moment generating functions in the sector $(\mathbb{R}_0^+)^d$ is sufficient to establish convergence in distribution of $d$-dimensional random variables.
% , i.e. we determine $ \E{e^{\sum_{i}s_i X_i}}$ for random variables $X_i$. 
%We study the moment generating function only in the sector given by $s_i \geq 0$ for all $i$ since this simplifies the argumentation with respect to the saddle point method.
%In the cases we consider, it is a consequence of \cite{Yakymiv2011} that pointwise convergence of the moment generating function in this sector is sufficient to establish weak convergence of the joint distribution of the involved random variables. 
% \comment{Verweis auf Yakymiv-Paper, dass Konvergenz der momenterzeugenden Funktionen fuer nichtnegative $s_i$ ausreichend ist.}
The first result shows that the triangular array $\bsq$ with 
$q_{j,n} = \mathbbm{1}_{\left\{j\leq\alpha(n)\right\}}$ is admissible. 
\begin{lem}\label{lem:StaSad}
Let $x_{n,\alpha}$ be defined by equation \eqref{eq:StaSad}. We have, as $n\to\infty$:
%\begin{enumerate}
%\item
\begin{equation}
\alpha\left(n\right)\log\left(x_{n,\alpha}\right)
=
\log\left(\frac{n}{\alpha\left(n\right)}\log\left(\frac{n}{\alpha\left(n\right)}\right)\right)+\caO\left(\frac{\log\left(\log\left(n\right)\right)}{\log\left(n\right)}\right).
\label{eq:StaSadAs}
\end{equation}
In particular,
%\begin{align*}
$x_{n,\alpha}\geq1, \ 
\lim_{n\rightarrow\infty}x_{n,\alpha}=1
\ \text{ and } \
x_{n,\alpha}^{\alpha\left(n\right)}\sim\frac{n}{\alpha\left(n\right)}\log\left(\frac{n}{\alpha\left(n\right)}\right)$.
Furthermore,
%\end{align*}
%\end{lem}
%This Lemma is a reformulation of Lemma 4.11 in \cite{BS17}, which in turn 
%follows \cite{manstavivcius2016local}. In the latter reference, the Lemma 
%is actually shown for more general functions $\alpha$.
%\begin{lem}\label{lem:lambda2alt}
%As $n\to\infty$,
%\item
\begin{equation}\label{eq:lambda2alt}
\sum_{j=1}^{\alpha(n)} jx_{n,\alpha}^j \sim n\alpha(n).
\end{equation}
%\end{enumerate}
\end{lem}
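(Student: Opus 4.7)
My plan rests on the closed form
\[
n \;=\; \sum_{j=1}^{\alpha(n)} x_{n,\alpha}^{j} \;=\; \frac{x_{n,\alpha}^{\alpha(n)+1}-x_{n,\alpha}}{x_{n,\alpha}-1},
\]
which can be rearranged as
\[
x_{n,\alpha}^{\alpha(n)} \;=\; 1 + \frac{n(x_{n,\alpha}-1)}{x_{n,\alpha}}.
\]
First I would verify the easy qualitative properties. Monotonicity of the right-hand side in $x$ together with $\sum_{j=1}^{\alpha} 1^j = \alpha(n) = o(n)$ (by \eqref{condition on alpha}) gives $x_{n,\alpha}>1$. Boundedness of $x_{n,\alpha}^{\alpha}$ would force $n \leq \alpha(n)\,x_{n,\alpha}^{\alpha} = o(n)$, hence $x_{n,\alpha}^{\alpha(n)} \to \infty$; combined with $n(x_{n,\alpha}-1)/x_{n,\alpha} \to \infty$, this will let me take logarithms without losing information.

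Next I would set $y := \log x_{n,\alpha}$ and use $x_{n,\alpha}-1 = y(1+O(y))$ to reduce the displayed identity to
\[
\alpha(n)\, y \;=\; \log n + \log y + o(1).
\]
Writing $L := \log(n/\alpha(n))$ and $w := \alpha(n) y$, this becomes $w - \log w = L + o(1)$, so that $w \to \infty$ and $w \sim L$. Feeding $w = L(1+o(1))$ back in yields $w = L + \log L + O(\log L / L)$; since $\alpha(n)\in[n^{a_1},n^{a_2}]$, we have $L \asymp \log n$ and $\log L \asymp \log\log n$, so the error is $O(\log\log n/\log n)$, giving \eqref{eq:StaSadAs}. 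This in turn gives $y\to 0$ (hence $x_{n,\alpha}\to 1$) and, by exponentiating, the desired $x_{n,\alpha}^{\alpha(n)} \sim \frac{n}{\alpha(n)}\log\frac{n}{\alpha(n)}$.

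Finally, for \eqref{eq:lambda2alt} I would write $\sum_{j=1}^{\alpha} j x^{j} = x S'(x)$ with $S(x) = (x^{\alpha+1}-x)/(x-1)$ and, using $x^{\alpha+1} - x = n(x-1)$ to eliminate the cancelling high-order term, obtain the clean identity
\[
\sum_{j=1}^{\alpha(n)} j\, x_{n,\alpha}^{j} \;=\; \frac{\alpha(n)\,x_{n,\alpha}^{\alpha(n)+1} - n}{x_{n,\alpha}-1}.
\]
Inserting $x_{n,\alpha}^{\alpha(n)} \sim (n/\alpha)\log(n/\alpha)$ and $x_{n,\alpha}-1 \sim y \sim L/\alpha(n)$ shows that the first term is $\sim n\alpha(n)$, while the second is of order $n\alpha(n)/L = o(n\alpha(n))$, proving $\sum j x_{n,\alpha}^{j} \sim n\alpha(n)$.

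The main obstacle I anticipate is the bookkeeping in step two: the iteration of $w - \log w = L + o(1)$ must be carried out carefully enough to track the error through the subsequent $\log$, so that $\log y = \log L - \log\alpha(n) + O(\log L/L)$ is justified uniformly in the admissible range of $\alpha(n)$. Everything else reduces to the exact rational identity and straightforward asymptotic substitutions.
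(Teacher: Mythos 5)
Your proposal is correct, and it takes a genuinely different route from the paper: the paper gives no in-text proof of Lemma \ref{lem:StaSad} at all, but refers to Lemma 4.11 of \cite{BS17} (which in turn follows \cite{manstavivcius2016local}, where the asymptotics are proved for more general $\alpha$) for \eqref{eq:StaSadAs}, and to Lemma 9 of \cite{manstavivcius2016local} for \eqref{eq:lambda2alt}, noting the latter is also a special case of Lemma \ref{lem:Lambda2}. Your argument is instead a self-contained elementary derivation: the exact identity $n=\frac{x^{\alpha+1}-x}{x-1}$, rewritten as $x^{\alpha}=1+n(x-1)/x$, reduces \eqref{eq:StaSadAs} to the bootstrap $w-\log w=L+o(1)$ with $w=\alpha\log x_{n,\alpha}$ and $L=\log(n/\alpha)$; the discarded terms (from $\log(1+A)-\log A$, $\log(x-1)-\log\log x$, and $\log x$ itself) are all polynomially small in $n$, so two iterations indeed give $w=L+\log L+\caO(\log L/L)$, and $\log L/L\asymp \log\log n/\log n$ on the admissible window $n^{a_1}\le\alpha(n)\le n^{a_2}$. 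Your closed form $\sum_{j=1}^{\alpha}jx_{n,\alpha}^{j}=\frac{\alpha x_{n,\alpha}^{\alpha+1}-n}{x_{n,\alpha}-1}$ at the saddle point checks out, and the two-term asymptotics then give \eqref{eq:lambda2alt}. What the paper's route buys is brevity and generality (the cited results cover $\alpha$ beyond the polynomial range, with sharper error control); what yours buys is a proof that stays entirely within the standing assumption \eqref{condition on alpha} and keeps the paper self-contained.

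One small repair to the write-up: you invoke $x_{n,\alpha}-1=y(1+\caO(y))$ and treat $\log x_{n,\alpha}$ as $o(1)$ before knowing $y\to 0$, and you only deduce $y\to 0$ from the final asymptotics, which as written is circular. Establish it up front: since $x_{n,\alpha}>1$, we have $x_{n,\alpha}^{\alpha}\le\sum_{j=1}^{\alpha}x_{n,\alpha}^{j}=n$, hence $y=\log x_{n,\alpha}\le \alpha(n)^{-1}\log n\le n^{-a_1}\log n\to 0$. With that one line inserted, the argument is complete.
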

The first part of the lemma is a reformulation of Lemma 4.11 in \cite{BS17}, which in turn 
follows \cite{manstavivcius2016local}. In the latter reference, the claims 
are actually shown for more general functions $\alpha$.
Equation \eqref{eq:lambda2alt} has been proved in Lemma 9 in \cite{manstavivcius2016local}. It may also be derived as a special case of Lemma \ref{lem:Lambda2}.

\subsection{Proof of Proposition \ref{mean}}
\label{sec:mean}
Equation \eqref{mu asymptotics} follows directly from Lemma \ref{lem:StaSad}. We apply equation \eqref{eq: finite joint generating functions} with $K=1$, differentiate with respect to $s_1$, set $s_1=0$ and obtain
%By Equation \eqref{eq: finite joint generating functions}, the moment generating function of $C_{m(n)}$ is given by
%\begin{align*}
%\mathbb{E}_{n,\alpha}\left[ \mathrm{e}^{sC_{m(n)}} \right]=\frac{1}{Z_{n,\alpha}}\left[z^n\right]\exp\left((\mathrm{e}^s-1)\frac{z^{m(n)}}{m(n)}\right)\exp\left(\sum_{j=1}^{\alpha(n)}\frac{z^j}{j}\right).
%\end{align*}
%Differentiating with respect to $s$ and setting $s=0$, we obtain
\begin{align*}
\mathbb{E}_{n,\alpha}\left[C_{m(n)} \right]=\frac{1}{Z_{n,\alpha}}\left[z^n\right]\frac{z^{m(n)}}{m(n)}\exp\left(\sum_{j=1}^{\alpha(n)}\frac{z^j}{j}\right).
\end{align*}
We may now apply Proposition \ref{prop:GenSaddle} with $f_n(z)=\frac{z^{m(n)}}{m(n)}$ and $q_{j,n}=\mathbbm{1}_{\{j\leq\alpha(n)\}}$. The array $\boldsymbol{q}$ is admissible by Lemma \ref{lem:StaSad} and $m(n)\leq \alpha(n) =o\left(\theta_n^{-1}\right)$ shows admissibility of $(f_n)$.
%The admissibility of $\left(f_n\right)$ is a consequence of $m(n)\leq \alpha(n) = o\left(n^{\frac{5}{12}}\alpha(n)^{\frac{7}{12}} \right)$. 
%The fact that $\boldsymbol{q}$ is admissible follows from Lemma \ref{lem:StaSad}. 
The claim then follows from $\mathbb{E}_{n,\alpha}\left[C_{m(n)} \right] \sim f_n(x_{n,\alpha})$.
%\begin{align*}
%\mathbb{E}_{n,\alpha}\left[C_{m(n)} \right] \sim \frac{x_{n,\alpha}^{m(n)}}{m(n)} = \mu_{m(n)}(n)
%\end{align*}
%is therefore proved.

\subsection{Proof of Theorem \ref{main thm 2}}\label{sec:TVDconv}
We follow the ideas in \cite{AT92}, where the case of uniform permutations 
is treated. 
Let $\left(Z_{k}\right)_{k}$ be independent random variables with $Z_{k}\sim\mathrm{Poi}\left(\frac{1}{k}\right)$ for $k\in\mathbb{N}$ and let
\begin{equation}
T_{b_{1}b_{2}}:=\sum_{k=b_{1}+1}^{b_{2}}kZ_{k}.\label{eq:Tdef}
\end{equation}
Let $\boldsymbol{C}_b=\left(C_1,C_2,\dots, C_b\right)$ the vector of the cycle counts up to length $b$, $\boldsymbol{Z}_b =\left(Z_1, Z_2, \dots, Z_b \right)$, and $\boldsymbol{a}=\left(a_1,a_2,\dots, a_b\right)$ a vector. A corner stone for investigating the classical case of uniform random permutations is the so-called conditioning relation \cite[Equation (1.15)]{ABT02},
\begin{equation}
\mathbb{P}_{n}\left[\boldsymbol{C}_{b}=\boldsymbol{a}\right]=\mathbb{P}\left[\left.\boldsymbol{Z}_{b}=\boldsymbol{a}\right|T_{0n}=n\right].\label{eq:Poissondarstellung}
\end{equation}
Since
$\mathbb{P}_{n,\alpha}=\mathbb{P}_{n}\left[\left.\cdot\right|C_{\alpha\left(n\right)+1}=...=C_{n}=0\right]$,
an analogue of Equation \eqref{eq:Poissondarstellung} holds for $b\leq\alpha\left(n\right)$:
\begin{equation}
\mathbb{P}_{n,\alpha}\left[\boldsymbol{C}_{b}=\boldsymbol{a}\right]=\mathbb{P}\left[\left.\boldsymbol{Z}_{b}=\boldsymbol{a}\right|T_{0\alpha\left(n\right)}=n\right].\label{eq:Poissondarst}
\end{equation}
Let $L(\boldsymbol{a}):=\sum_{k=1}^{b\left(n\right)}ka_{k}$. For $\boldsymbol{a}\in\mathbb{N}^{b\left(n\right)}$ with 
$L(\boldsymbol{a})=r$, independence of the $Z_k$ gives
\begin{align*}
\mathbb{P}\left[\left.\boldsymbol{Z}_{b\left(n\right)}=\boldsymbol{a}\right|T_{0\alpha\left(n\right)}=n\right]
%= \frac{\mathbb{P}\left[\boldsymbol{Z}_{b\left(n\right)}=\boldsymbol{a},T_{0\alpha\left(n\right)}=n\right]}{\mathbb{P}\left[T_{0\alpha\left(n\right)}=n\right]}
%\\
%= & \frac{\mathbb{P}\left[\boldsymbol{Z}_{b\left(n\right)}=\boldsymbol{a},T_{b\left(n\right)\alpha\left(n\right)}=n-r\right]}{\mathbb{P}\left[T_{0\alpha\left(n\right)}=n\right]}\\
%= & 
= \frac{\mathbb{P}\left[\boldsymbol{Z}_{b\left(n\right)}=\boldsymbol{a}\right]\mathbb{P}\left[T_{b\left(n\right)\alpha\left(n\right)}=n-r\right]}{\mathbb{P}\left[T_{0\alpha\left(n\right)}=n\right]}.
\end{align*}
Define $\bbP_{n,b(n),\alpha}$ and $\tilde \bbP_{b(n)}$ as in Theorem \ref{main thm 2}, and let $d_{b(n)} := 
\| \bbP_{n,b(n),\alpha} - \tilde \bbP_{b(n)} \|_{\rm TV}$. By
%equation
\eqref{eq:Poissondarstellung},
% implies 
\begin{align*}
d_{b\left(n\right)}= &
% \sum_{\boldsymbol{a}\in\mathbb{N}^{b\left(n\right)}}\left(\mathbb{P}\left[\boldsymbol{Z}_{b\left(n\right)}=\boldsymbol{a}\right]-\mathbb{P}\left[\left.\boldsymbol{Z}_{b\left(n\right)}=\boldsymbol{a}\right|T_{0\alpha\left(n\right)}=n\right]\right)_+
%\\
%= &
 \sum_{r=0}^{\infty} \sum_{\boldsymbol{a}:L\left(\boldsymbol{a}\right)=r}\mathbb{P}\left[\boldsymbol{Z}_{b\left(n\right)}=\boldsymbol{a}\right]\left(1-\frac{\mathbb{P}\left[T_{b\left(n\right)\alpha\left(n\right)}=n-r\right]}{\mathbb{P}\left[T_{0\alpha\left(n\right)}=n\right]}\right)_+\\
= & \sum_{r=0}^{\infty}\mathbb{P}\left[T_{0b\left(n\right)}=r\right]\left(1-\frac{\mathbb{P}\left[T_{b\left(n\right)\alpha\left(n\right)}=n-r\right]}{\mathbb{P}\left[T_{0\alpha\left(n\right)}=n\right]}\right)_+ \\ 
 \leq & 
\mathbb{P}\left[T_{0b\left(n\right)}\geq \rho_{n}b\left( n\right)+1\right]+\sum_{r=0}^{\rho_{n}b\left(n\right)}\mathbb{P}\left[T_{0b\left(n\right)}=r\right]\left(1-\frac{\mathbb{P}\left[T_{b\left(n\right)\alpha\left(n\right)}=n-r\right]}{\mathbb{P}\left[T_{0\alpha\left(n\right)}=n\right]}\right)_+,
\end{align*}
where $\rho_n > 0$ is arbitrary for now. In 
\cite[Lemma 8]{AT92} it is shown that 
\[
\mathbb{P}\left[T_{0b\left(n\right)}\geq \rho_n b\left(n\right)\right]\leq
\left(\frac{\rho_n}{\mathrm{e}}\right)^{-\rho_n}.
\]
So $\mathbb{P}\left[T_{0b\left(n\right)}\geq  \log (n) b\left(n\right) \right]$ decays faster than any power of $n$.
% For the second term, we estimate 
%\[
%\sum_{r=0}^{\rho_{n}b\left(n\right)}\mathbb{P}\left[T_{0b\left(n\right)}=r\right]\left(1-\frac{\mathbb{P}\left[T_{b\left(n\right)\alpha\left(n\right)}=n-r\right]}{\mathbb{P}\left[T_{0\alpha\left(n\right)}=n\right]}\right)_+ \leq \max_{0 \leq r \leq \rho_n b(n)} \left(1-\frac{\mathbb{P}\left[T_{b\left(n\right)\alpha\left(n\right)}=n-r\right]}{\mathbb{P}\left[T_{0\alpha\left(n\right)}=n\right]}\right)_+.
%\]
The proof is then concluded by plugging $\rho_n = \log n$ into the 
estimate of the lemma below. 
\begin{lem}{\label{lem:rKlein}}
Let $b\left(n\right)=o\left(\frac{\alpha\left(n\right)}{\log\left(n\right)}\right)$
and $\rho_{n}=\caO\left(\log\left(n\right)\right)$. Then,
\[
\max_{1\leq r\leq \rho_{n}b\left(n\right)}\left(1-\frac{\mathbb{P}\left[T_{b\left(n\right)\alpha\left(n\right)}=n-r\right]}{\mathbb{P}\left[T_{0\alpha\left(n\right)}=n\right]}\right)_{+}=\caO\left(\frac{\alpha(n)}{n} + \frac{b(n)}{\alpha(n)} \log(n)\right)
\]
as $n\rightarrow\infty$.
\end{lem}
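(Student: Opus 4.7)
The plan is to apply Proposition~\ref{prop:GenSaddle} separately to $\mathbb{P}[T_{0\alpha(n)}=n]$ and $\mathbb{P}[T_{b(n)\alpha(n)}=n-r]$, and then compare the resulting asymptotic expressions. Writing $H_m=\sum_{k=1}^m 1/k$, $S(y)=\sum_{j=1}^{\alpha}y^j/j$ and $S_b(y)=\sum_{j=b+1}^{\alpha}y^j/j$, the Poisson generating functions give
\[
\mathbb{P}[T_{0\alpha}=n]=\mathrm{e}^{-H_\alpha}Z_{n,\alpha},\qquad \mathbb{P}[T_{b\alpha}=n-r]=\mathrm{e}^{-(H_\alpha-H_b)}[z^{n-r}]\mathrm{e}^{S_b(z)},
\]
so the ratio is $\mathrm{e}^{H_b}$ times a quotient of two coefficient extractions. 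For the denominator I would use the array $q_{j,n}=\mathbbm{1}_{\{j\leq\alpha\}}$, admissible by Lemma~\ref{lem:StaSad}, with saddle point $x:=x_{n,\alpha}$. For the numerator I would use $\tilde q_{j,n}=\mathbbm{1}_{\{b(n)<j\leq\alpha\}}$ with target $n-r$: admissibility condition (iii) holds because $b(n)/\alpha(n)=o(1/\log n)$, and the new saddle point $\tilde x$, defined by $\sum_{j=b+1}^{\alpha}\tilde x^j=n-r$, satisfies
\[
\tilde x - x=\caO\!\left(\tfrac{r+b}{n\alpha}\right)x
\]
by implicit differentiation, using $\lambda_{2,n}\sim n\alpha$ from Lemma~\ref{lem:StaSad} together with $\sum_{j=1}^{b}x^j=b+o(b)$ (which follows from $b\log x=o(1)$). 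Conditions (i) and (ii) for $\tilde q$ then follow from the corresponding ones for $q$ via the smallness of $\tilde x - x$.

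The crux of the proof is an exact cancellation. Setting $\psi(y):=S_b(y)-(n-r)\log y$, the defining equation for $\tilde x$ reads precisely $\psi'(\tilde x)=0$, so a second-order Taylor expansion yields $\psi(\tilde x)-\psi(x)=\tfrac12\psi''(\xi)(\tilde x-x)^2$ with $\psi''\approx n\alpha/x^2$. Together with the bound on $\tilde x - x$ and $r+b=\caO(b(n)\log n)$ combined with $b(n)\log n=o(\alpha(n))$, this gives $\psi(\tilde x)-\psi(x)=\caO(\alpha/n)$ uniformly in $r$. Assembling the two saddle-point asymptotics, the contributions $(n-r)\log(\tilde x/x)$ arising from the prefactor $x^n/\tilde x^{n-r}$ cancel exactly against $S_b(\tilde x)-S_b(x)$ up to this Taylor remainder, leaving
\[
\frac{\mathbb{P}[T_{b\alpha}=n-r]}{\mathbb{P}[T_{0\alpha}=n]}=\exp\!\left(r\log x-\sum_{j=1}^{b}\tfrac{x^j-1}{j}\right)\sqrt{\tfrac{\lambda_{2,n}}{\tilde\lambda_{2,n}}}\bigl(1+\caO(\alpha/n)\bigr)
\]
uniformly for $1\leq r\leq\rho_n b(n)$.

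The remaining factors are then straightforward. A term-by-term comparison, controlled by $\sum_{j=1}^{b}jx^j=\caO(b^2)$ and the bound on $\tilde x-x$, gives $\sqrt{\lambda_{2,n}/\tilde\lambda_{2,n}}=1+\caO(\alpha/n)$; and using $\log x\sim\log(n/\alpha)/\alpha$ together with $b\log x=o(1)$, one has $\sum_{j=1}^{b}(x^j-1)/j=\caO(b(n)\log n/\alpha(n))$, matching the target error exactly. The potentially large term $r\log x=\caO(b\log^2 n/\alpha)$ is handled by positivity: whenever $r\log x$ dominates the negative contribution $-\sum_{j=1}^{b}(x^j-1)/j$, the full exponent is non-negative and the ratio exceeds $1-\caO(\alpha/n)$, contributing nothing to $(1-\mathrm{Ratio})_+$; otherwise $r\log x\leq\caO(b\log n/\alpha+\alpha/n)$ automatically, and the elementary estimate $1-\mathrm{e}^{-t}\leq t$ closes the argument. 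The main obstacle will be the uniform-in-$r$ bookkeeping of the Taylor remainders in the saddle-shift analysis, and the realisation that the apparently dangerous factor $r\log x$ must not be bounded absolutely but only when it fails to dominate the competing term.
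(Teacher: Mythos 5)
Your argument is correct, but it takes a genuinely different route from the paper's. The paper never shifts the saddle: it rewrites $[z^{n-r}]\exp\bigl(\sum_{j=b(n)+1}^{\alpha(n)}z^j/j\bigr)$ as $[z^{n}]\,z^{r}\exp\bigl(\sum_{j=b(n)+1}^{\alpha(n)}z^j/j\bigr)$, so that numerator and denominator (cf.\ \eqref{4.4a}--\eqref{4.4b}) are both $n$-th coefficients built from the \emph{same} truncated array $q_{j,n}=\mathbbm{1}_{\{b(n)<j\leq\alpha(n)\}}$, hence from the same saddle point $x_{n,b,\alpha}$ solving $n=\sum_{j=b(n)+1}^{\alpha(n)}x_{n,b,\alpha}^{j}$; the whole $r$- and $b$-dependence is pushed into the admissible prefactors $f^{(r)}(z)=z^{r}$ and $f_{b,n}(z)=\exp\bigl(\sum_{j=1}^{b(n)}(z^j-1)/j\bigr)$. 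All exponential and Gaussian factors then cancel identically in the quotient, no saddle-shift analysis is required, and uniformity in $1\leq r\leq\rho_n b(n)$ comes for free because Proposition~\ref{prop:GenSaddle} states that the error depends on $(f_n)$ only through $K,N$ in \eqref{admiss funct 1}. You instead invoke the proposition twice, with two different arrays, two different targets ($n$ and $n-r$) and two different saddles, and recover the cancellation a posteriori from the stationarity of $y\mapsto\sum_{j=b+1}^{\alpha}y^j/j-(n-r)\log y$ at the shifted saddle, a second-order Taylor remainder of size $\caO\bigl((r+b)^2/(n\alpha)\bigr)=o(\alpha/n)$, and the bound $\tilde x - x=\caO\bigl((r+b)/(n\alpha)\bigr)$; your endgame ($x^{r}\geq 1$ by positivity, $\sum_{j\leq b}(x^j-1)/j=\caO(b\log n/\alpha)$ via $x^{b}=\caO(1)$) coincides with the paper's. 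The price of your route is exactly the bookkeeping you flag: Proposition~\ref{prop:GenSaddle} is formulated for the $n$-th coefficient of a fixed array, so to apply it to the $r$-indexed family (target $n-r$, truncated array) you must trace through its proof that the implicit $\caO(\alpha/n)$ constant depends only on the constants in the admissibility conditions (i)--(iii), which can indeed be chosen uniformly over $1\leq r\leq\rho_n b(n)$ since $r=\caO(b(n)\log n)=o(\alpha(n))=o(n)$; the paper's same-saddle formulation builds this uniformity in by construction. Both approaches buy the same estimate in the end, yours making the mechanism of the cancellation explicit, the paper's avoiding the shift entirely.
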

\begin{proof}
%The probability generating function of
We have
%$
%\mathbb{E}[z]T_{b_{1}b_{2}}$ is given by
$
\mathbb{E}[z^{T_{b_{1}b_{2}}}]= \exp\left(\sum_{j=b_{1}+1}^{b_{2}}\frac{z^{j}-1}{j}\right)
$
%(cf. \cite{AT92})
. Therefore, 
%\[
%\bbP [ T_{b(n),\alpha(n)} = m ] = 
%[z^m] \exp \left( \sum_{j=b(n)+1}^{\alpha(n)} \frac{z^j-1}{j} \right). 
%\]
%In particular, 
\begin{equation}\label{4.4a}
\bbP [ T_{b(n)\alpha(n)} = n-r ] = 
[z^{n-r}] \e{ \sum_{j=b(n)+1}^{\alpha(n)} \frac{z^j-1}{j}} = \e{ - \sum_{j=b(n)+1}^{\alpha(n)} \frac{1}{j}}
[z^{n}] z^{r} \e{ \sum_{j=b(n)+1}^{\alpha(n)} \frac{z^j}{j}}  
\end{equation}
and 
\begin{equation}\label{4.4b}
\bbP [ T_{0\alpha(n)} = n ] = \e{ - \sum_{j=b(n)+1}^{\alpha(n)} \frac{1}{j}}
[z^{n}] \e{ \sum_{j=1}^{b(n)} \frac{z^j-1}{j}} \e{ \sum_{j=b(n)+1}^{\alpha(n)} \frac{z^j}{j}}. 
\end{equation}
Since the factors of $ \exp\left( - \sum_{j=b(n)+1}^{\alpha(n)} \frac{1}{j}\right)$ will cancel in the quotient of the two terms, we see that we are in the situation of Proposition \ref{prop:GenSaddle}. We have 
%$q_{j,n} = 1$ if $j \geq b(n)+1$ and $q_{j,n} = 0$ otherwise, and  
%$f_1(z) = z^{r}$ and $f_2(z) = \e{ \sum_{j=1}^{b(n)} \frac{z^j-1}{j}}$, 
%respectively. 
$q_{j,n}=\mathbbm{1}_{\{b(n)<j\leq\alpha(n)\}}$
in both \eqref{4.4a} and \eqref{4.4b}. Thus, the relevant saddle point
$x_{n,b,\alpha}$ is the unique
positive solution of
$
n=\sum_{j=b\left(n\right)+1}^{\alpha\left(n\right)}x_{n,b,\alpha}^{j}.
$
With $x_{n,\alpha}:=x_{n,0,\alpha}$ defined by \eqref{eq:StaSad}, we easily see that
%When $x_{n,\alpha}$ is the saddle point with $q_{j,n} = 1$ for all $j$, 
%we easily see that   
%\begin{equation}
$x_{n,\alpha}\leq x_{n,b,\alpha}\leq x_{n,\frac{\alpha}{2}}$
%\label{eq:2Saddle}
%\end{equation}
for large $n$. So Lemma \ref{lem:StaSad} shows
%Lemma \ref{lem:StaSad} now shows that 
%\[
%\log \frac{n}{\alpha(n)} \sim \alpha \log x_{n,\alpha} \leq 
%\alpha \log x_{n,b,\alpha} \leq \alpha \log x_{n,\alpha/2} \sim 
%2 \log \frac{n}{\alpha(n)},
%\]
%so 
$\alpha \log x_{n,b,\alpha} \approx \log \frac{n}{\alpha(n)}$ and 
%similarly we have that 
$\lambda_{2,n} \approx n \alpha(n)$. Thus $\bsq$ is admissible. \\
In \eqref{4.4a}, we have $f_n(z)=f^{(r)}(z)=z^r$ for all $n$ in the context of Proposition \ref{prop:GenSaddle}. Then, $f^{(r)}$ fulfils \eqref{admiss funct 1}
with $N=K=1$ for all $r\in\mathbb{N}$, and $|\!|\!|f^{(r)}|\!|\!|_n\leq r\theta_n=\mathcal{O}(\theta_n b(n)\log(n))$ uniformly in $r\leq \rho_n b(n)$. By the assumption on $(b(n))$, $f^{(r)}$ is admissible. \\
In \eqref{4.4b}, $f_n(z)=f_{b,n}(z)=\exp\left(\sum_{j=1}^{b(n)}\frac{z^j-1}{j}\right)$. We have
%In the admissibility of $f_1$ and $f_2$, only condition (iii) is not trivial. For $f_1$, we have 
%\[
%|\!|\!| f_1 |\!|\!| \leq r n^{-\frac{5}{12}} \alpha(n)^{-\frac{7}{12}} 
%= \caO \left( \frac{b(n) \log n}{n^{5/12}\alpha(n)^{7/12}} \right),
%\]
%and for $f_2$, we find 
%
$
|\!|\!| f_{b,n} |\!|\!|_n \leq \theta_n 
\sum_{j=0}^{b(n)-1} x_{n,b,\alpha}^j \leq 
\theta_n b(n) x_{n,b,\alpha}^{b(n)}
$
and
\begin{equation}
\label{bn is bounded}
b(n) \log x_{n,b,\alpha} \approx \frac{b(n)}{\alpha(n)}  \log \left( \frac{n}{\alpha(n)} \right) = o(1)
\end{equation}
by the assumptions on $(b(n))$. Thus, $(f_{b,n})_n$ is admissible. We conclude
%$|\!|\!| f_2 |\!|\!| = \caO(n^{-5/12} \alpha(n)^{-7/12})$. 
%We can now apply Proposition \ref{prop:GenSaddle}, and find 
\begin{equation}\label{Stern2}
\frac{\mathbb{P}\left[T_{b\left(n\right)\alpha\left(n\right)}=n-r\right]}{\mathbb{P}\left[T_{0\alpha\left(n\right)}=n\right]}
= \frac{f_r(x_{n,b,\alpha})}{f_{b,n}(x_{n,b,\alpha})} 
\left(1 + \caO\Big( \frac{\alpha(n)}{n} +  \theta_n b(n) \log n\Big)\right),
\end{equation}
uniformly in $1\leq r \leq \rho_n b(n)$.
Now, $f^{(r)}(x_{n,b,\alpha}) \geq 1$ since $x_{n,b,\alpha}\geq 1$. On the other hand, writing $x$ instead of $x_{n,b,\alpha}$ and $f$ instead of $f_{b,n}$, we find 
\[
0 \leq \log (f(x)) = \sum_{j=1}^{b(n)} \frac{x^j-1}{j} = 
\int_1^x \sum_{j=0}^{b(n)-1} v^j  \, \dd v \leq (x-1) \, b(n) \, x^{b(n)}.
\]
By \eqref{bn is bounded}, $x^{b(n)} = \caO(1)$, and so $(x-1) b(n) x^{b(n)} = \caO\left( \frac{b(n)}{\alpha(n)} \log n \right)$.
We conclude
$1 \leq f(x) \leq 1 + \caO\left(\tfrac{b(n)}{\alpha(n)} \log n\right).$ Hence,
%Since $x-1 \leq x_{n,\alpha/2}-1 \sim 2\frac{\log \left( n / \alpha(n) \right)}{\alpha(n)}$ and 
%$x^{b(n)} = \caO(1)$ by \eqref{bn is bounded}, we find that 
%\[
%(x-1) b(n) x^{b(n)} = \caO\left( \frac{b(n)}{\alpha(n)} \log n \right),\]
%and so 
%\[
%1 \leq f_2(x) \leq 1 + \caO\left(\tfrac{b(n)}{\alpha(n)} \log n\right).
%\]
%Since $f_1(x) \geq 1$, we find that 
\[
\frac{f_r(x)}{f_{b,n}(x)} \geq 1 + \caO\left(\frac{b(n)}
{\alpha(n)} \log n\right),
\]
The claim now follows by inserting this into \eqref{Stern2}.
\end{proof}
\subsection{Proof of Theorem \ref{main thm 1}}\label{sec:counts}
%In this section we provide the proof for Theorem
%% \ref{thm:Short}, 
%\ref{main thm 1} which deals with tilted cycle counts. The basic techniques are again moment-generating
%functions and the saddle-point method.
%
%
%
%
%
%
%
%\begin{proof}[Proof of Theorem \ref{main thm 1}]
Write $\mu_j := \mu_{m_j}$ and $\tilde{C}_{m_j} := C_{m_j}^{\left(\mu_j\right)}$. Let $s_j \geq 0$.
%and $s_j \geq \limsup_{n\to\infty} \log \mu_j$ for $1\leq j\leq k$
%According to the definition of the tilted cycle counts, 
We have
\begin{align*}
& \mathbb{E}\left[\exp\left(\sum_{j=1}^k s_j \tilde{C}_{m_j}\right) \right] 
=  \sum_{l_1=0}^\infty \dots \sum_{l_k=0}^\infty \exp\left(\sum_{j=1}^k s_j l_j \right)\mathbb{P}\left[\tilde{C}_{m_1}=l_1, \dots , \tilde{C}_{m_k} = l_k \right]  \\
= &\frac{1}{Z}\sum_{l_1=0}^\infty \dots \sum_{l_k=0}^\infty \prod_{j=1}^k \frac{\exp(s_j l_j + \mu_j)}{\mu_j^{l_j}}\mathbb{P}_{n,\alpha}\left[C_{m_1}=l_1, \dots , C_{m_k} = l_k \right] \\
= & \frac{\exp\left(\sum_{j=1}^k \mu_j\right)}{Z}\sum_{l_1=0}^\infty \dots \sum_{l_k=0}^\infty \prod_{j=1}^k \exp[ l_j (s_j -\log \mu_j)]\mathbb{P}_{n,\alpha}\left[C_{m_1}=l_1, \dots , C_{m_k} = l_k \right] \\
= & \frac{\exp\left(\sum_{j=1}^k \mu_j\right)}{Z}  \mathbb{E}_{n,\alpha}\left[\exp\left(\sum_{j=1}^k \left(s_j - \log \mu_j\right) C_{m_j}\right) \right].
\end{align*}
Here, the normalization $Z$ depends on $n$.
By Equation \eqref{eq: finite joint generating functions},
the last expectation is given by $Z_{n,\alpha}^{-1} \left[z^n \right]f_n(z)\exp\left(\sum_{i=1}^{\alpha(n)}\frac{z^i}{i}\right)$, with
%\begin{align}\label{eq:tiltzwischen}
%& \mathbb{E}_{n,\alpha}\left[\exp\left(\sum_{j=1}^k \left(s_j - \log \mu_j\right) C_{m_j}\right) \right] \\
%=& \frac{1}{Z_{n,\alpha}} \left[z^n \right]\exp\left(\sum_{j=1}^k  \left(\mathrm{e}^{s_j - \log \mu_j }-1\right)\frac{z^{m_j}}{m_j} \right)\exp\left(\sum_{i=1}^{\alpha(n)}\frac{z^i}{i}\right).\nonumber
%\end{align}
$f_n(z):=\exp\left(\sum_{j=1}^k  \left(\mathrm{e}^{s_j - \log \mu_j }-1\right)\frac{z^{m_j}}{m_j} \right)$. 
We are thus in the framework of Proposition \ref{prop:GenSaddle}, with $q_{j,n}=\mathbbm{1}_{\{j\leq \alpha(n)\}}$. By Lemma \ref{lem:StaSad}, it only remains to check admissibility of $(f_n)$.
For \eqref{admiss funct 1}, note that $|f_n(z)|\leq \exp\left(\sum_{j=1}^k |\mathrm{e}^{s_j-\log (\mu_j)}-1|\frac{x_{n,\alpha}^j}{j}\right)$ and
% that we have
$$
\left|\mathrm{e}^{s_j-\log (\mu_j)}-1\right| = \left(\mathrm{e}^{s_j-\log (\mu_j)}-1\right)+2\left(1-\mathrm{e}^{s_j-\log (\mu_j)}\right)_+
\leq  \left(\mathrm{e}^{s_j-\log (\mu_j)}-1\right) +2.
$$
Since $\mu_j=\frac{x_{n,\alpha}^j}{m_j}$ by definition and $K_0 =\sup\{\mu_j:n\in\mathbb{N},j\leq k\}<\infty$ by assumption \eqref{limsup condition}, we get
\begin{equation}\label{Stern}
\left| f_n(z) \right| \leq K f_n(x_{n,\alpha})
\end{equation}
if $|z|=x_{n,\alpha}$, for all $s_k\geq 0$, with $K=\exp(2kK_0)$. For computing $|\!|\!|f_n|\!|\!|_n$, a direct calculation together with \eqref{Stern} gives
\begin{align*}
\left|\frac{f_n^\prime (z)}{f_n(x_{n,\alpha})}\right|\leq K \sum_{j=1}^k \left|\mathrm{e}^{s_j-\log (\mu_j)}-1\right| x_{n,\alpha}^{m_j-1}
\leq K \sum_{j=1}^k \left(\frac{\mathrm{e}^{s_j}}{\mu_j}+1\right)\mu_j m_j
\leq K\left(\sum_{j=1}^k \mathrm{e}^{s_j}+kK_0\right)\alpha.
\end{align*}
So, $|\!|\!|f_n|\!|\!|_n \leq  K\left(\sum_{j=1}^k \mathrm{e}^{s_j}+kK_0\right)\theta_n \alpha(n)=o(1)$, and $(f_n)$ is admissible.
By Proposition \ref{prop:GenSaddle}, we obtain
\begin{align*}
 \mathbb{E}\left[\exp\left(\sum_{j=1}^k s_j \tilde{C}_{m_j}\right) \right] \sim  \frac{\exp\left(\sum_{j=1}^k \mu_j\right)}{Z} f_n(x_{n,\alpha})
= \frac{\prod_{j=1}^k\exp\left(\mathrm{e}^{s_j}\right)}{Z}.
\end{align*}
By setting $s_j =0$ for all $j$, we may deduce $Z\to \mathrm{e}^k$ as $n\to\infty$, and the claim is proved.

%\end{proof}

\subsection{Proof of Theorem \ref{main thm 3}}

We now turn to the case of diverging expectation. The following proposition states the most general result in this regime.

\begin{prop}
\label{prop:CLTcycle}Let $m_{j}:\mathbb{N\rightarrow\mathbb{N}}$
for $1\leq j\leq k$ such that $m_{j}\left(n\right)\leq\alpha\left(n\right)$
%	\begin{color}{red} 
	and $m_i(n)\neq m_j(n)$ for $i\neq j$.
%      \end{color}
Further, let $\mu_{m_j(n)}\left(n\right)$
%=\frac{x_{n,\alpha}^{m_{j}\left(n\right)}}{m_{j}\left(n\right)}
as in \eqref{eq:def_mu_n}.
If $\mu_{m_j(n)}\left(n\right)\rightarrow\infty$ and $\theta_n \frac{x_{n,\alpha}^{m_{j}\left(n\right)}}{\sqrt{\mu_{m_j(n)}\left(n\right)}}\rightarrow0$
for all $j$, then
\[
\lim_{n\rightarrow\infty}\mathbb{E}_{n,\alpha}\left[\prod_{j=1}^{k}\exp\left(s_{j}\frac{C_{m_{j}\left(n\right)}-\mu_{m_j(n)}\left(n\right)}{\sqrt{\mu_{m_j(n)}\left(n\right)}}\right)\right]=\exp\left(\sum_{j=1}^{k}\frac{s_{j}^{2}}{2}\right)
\]
for all $s_{j}\geq0$.
\end{prop}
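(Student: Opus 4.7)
The plan is to compute the joint moment generating function by applying the generating-function identity \eqref{eq: finite joint generating functions} with tilts $s_j/\sqrt{\mu_{m_j}}$, extract the leading order via Proposition~\ref{prop:GenSaddle}, and then Taylor-expand to uncover the Gaussian limit.

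First I would use \eqref{eq: finite joint generating functions} with $s_k$ replaced by $s_k/\sqrt{\mu_{m_k}}$ and compensate with the centering factor $\e{-s_j\sqrt{\mu_{m_j}}}$ to rewrite
\begin{equation*}
\mathbb{E}_{n,\alpha}\Bigl[\prod_{j=1}^{k}\e{s_{j}(C_{m_{j}}-\mu_{m_j})/\sqrt{\mu_{m_j}}}\Bigr]
=
\e{-\sum_{j=1}^{k} s_{j}\sqrt{\mu_{m_j}}}\,\frac{[z^{n}]\, f_{n}(z)\, \exp\bigl(\sum_{i=1}^{\alpha(n)} z^{i}/i\bigr)}{Z_{n,\alpha}},
\end{equation*}
where
\begin{equation*}
f_{n}(z):=\exp\Bigl(\sum_{j=1}^{k}\bigl(\e{s_{j}/\sqrt{\mu_{m_j}}}-1\bigr)\frac{z^{m_{j}}}{m_{j}}\Bigr).
\end{equation*}
Since $Z_{n,\alpha}$ corresponds via \eqref{eq:cNorm} to the same generating function with $f\equiv 1$, the universal saddle-point prefactor from Proposition~\ref{prop:GenSaddle} will cancel in the ratio.

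Next I would verify admissibility of $(f_{n})_{n}$ with respect to the array $\bsq$ given by $q_{j,n}=\mathbbm{1}_{\{j\leq\alpha(n)\}}$, which is admissible by Lemma~\ref{lem:StaSad}. Holomorphy is immediate since $f_n$ is entire. Because $s_{j}\geq 0$ forces all coefficients in the exponent of $f_{n}$ to be nonnegative, $|f_{n}(z)|\leq f_{n}(x_{n,\alpha})$ on $|z|=x_{n,\alpha}$, giving \eqref{admiss funct 1} with $K=0$. A direct differentiation yields $f_{n}'(z)=f_{n}(z)\sum_{j=1}^{k}\bigl(\e{s_{j}/\sqrt{\mu_{m_j}}}-1\bigr) z^{m_{j}-1}$; using $\e{s_{j}/\sqrt{\mu_{m_j}}}-1\leq 2 s_{j}/\sqrt{\mu_{m_j}}$ for large $n$ (which holds since $\mu_{m_j}\to\infty$) together with $x_{n,\alpha}\geq 1$ then gives
\begin{equation*}
|\!|\!| f_{n} |\!|\!|_{n} \;\leq\; 2\theta_{n}\sum_{j=1}^{k} \frac{s_{j}\, x_{n,\alpha}^{m_{j}}}{\sqrt{\mu_{m_j}}},
\end{equation*}
which vanishes by hypothesis.

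With admissibility established, Proposition~\ref{prop:GenSaddle} gives
\begin{equation*}
\frac{[z^{n}] f_{n}(z)\, \exp\bigl(\sum_{i=1}^{\alpha(n)} z^{i}/i\bigr)}{Z_{n,\alpha}}
\;=\;
f_{n}(x_{n,\alpha})\,(1+o(1)).
\end{equation*}
Using $x_{n,\alpha}^{m_{j}}/m_{j}=\mu_{m_j}$ and Taylor-expanding
$\e{s_{j}/\sqrt{\mu_{m_j}}}-1=s_{j}/\sqrt{\mu_{m_j}}+ s_{j}^{2}/(2\mu_{m_j})+\caO(\mu_{m_j}^{-3/2})$
(legitimate because $\mu_{m_j}\to\infty$), I obtain
\begin{equation*}
\bigl(\e{s_{j}/\sqrt{\mu_{m_j}}}-1\bigr)\mu_{m_j}-s_j\sqrt{\mu_{m_j}}=\frac{s_{j}^{2}}{2}+o(1).
\end{equation*}
Summing over $j$ and combining with the centering factor cancels all the $s_j\sqrt{\mu_{m_j}}$ terms, so the joint MGF converges to $\exp\bigl(\tfrac12 \sum_{j=1}^{k} s_{j}^{2}\bigr)$, which is the joint MGF of $k$ independent standard normals. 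The main obstacle is the derivative bound \eqref{admiss funct 2}: this is the step where the technical hypothesis $\theta_{n} x_{n,\alpha}^{m_{j}}/\sqrt{\mu_{m_j}}\to 0$ is essential, and it is also what ultimately forces the restriction $a_{1}>1/7$ when Theorem~\ref{main thm 3} is deduced from this proposition.
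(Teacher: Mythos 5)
Your proposal is correct and follows essentially the same route as the paper: the same MGF identity with tilts $s_j/\sqrt{\mu_{m_j}}$ and centering factor, the same admissibility verification via Proposition~\ref{prop:GenSaddle} (with the hypothesis $\theta_n x_{n,\alpha}^{m_j}/\sqrt{\mu_{m_j}}\to 0$ used exactly where you use it), and the same Taylor expansion of $f_n(x_{n,\alpha})$ yielding $\exp\bigl(\sum_j s_j^2/2\bigr)$. Your closing remark about the origin of the $a_1>1/7$ condition also matches how the paper deduces Theorem~\ref{main thm 3} from the proposition.
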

\begin{proof}
Write $\mu_j := \mu_{m_j(n)}(n)$. Applying equation \eqref{eq: finite joint generating functions} with $s_k$ replaced by $s_j /\sqrt{\mu_{j}}$, we are in the framework of
Proposition \ref{prop:GenSaddle}. Again $q_{i,n}:=\mathbbm{1}_{\{i\leq\alpha(n)\}}$, so $\boldsymbol{q}$ is admissible, and 
\[
f_{n}\left(z\right)=\exp\left[\sum_{j=1}^{k}\left(\exp\left(\frac{s_{j}}{\sqrt{\mu_{j}\left(n\right)}}\right)-1\right)\frac{z^{m_{j}\left(n\right)}}{m_{j}\left(n\right)}\right]\exp\left(-\sum_{j=1}^{k}s_{j}\sqrt{\mu_{j}\left(n\right)}\right)
\]
%\[
%\lim_{n\rightarrow\infty} |\!|\!| f_n |\!|\!|_n = \lim_{n\rightarrow\infty}n^{-\frac{5}{12}}\alpha\left(n\right)^{-\frac{7}{12}}\sup_{z\in\partial B_{x_{n,\alpha}}\left(0\right)}\frac{\left|f_{n}^{\prime}\left(z\right)\right|}{\left|f_{n}\left(\left|z\right|\right)\right|}=0.
%\]
For admissibility of $(f_n)$, we compute
\begin{align*}
\sup_{z\in\partial B_{x_{n,\alpha}}\left(0\right)}\frac{\left|f_{n}^{\prime}\left(z\right)\right|}{\left|f_{n}\left(\left|z\right|\right)\right|}
\leq  \sum_{j=1}^{k}\left(\exp\left(\frac{s_{j}}{\sqrt{\mu_{j}\left(n\right)}}\right)-1\right)x_{n,\alpha}^{m_{j}\left(n\right)-1}.
%&=  \caO\left(n^{-\frac{5}{12}}\alpha\left(n\right)^{-\frac{7}{12}}\sum_{j=1}^{k}\frac{x_{n,\alpha}^{m_{j}\left(n\right)}}{\sqrt{\mu_{j}\left(n\right)}}\right).
\end{align*}
By our assumption on $\mu_{m_j}(n)$, $(f_n)$ is admissible and we may apply Proposition \ref{prop:GenSaddle}. 
Again the case $s_j=0$ for all $j$ deals with the normalizing constant, and so,
%So, according to our assumptions, $(f_n)$ is admissible and we may apply the proposition.
from
\begin{align*}
f_{n}\left(x_{n,\alpha}\right)= & \exp\left[\sum_{j=1}^{k}\left(\frac{s_{j}}{\sqrt{\mu_{j}\left(n\right)}}+\frac{s_{j}^{2}}{2\mu_{j}\left(n\right)}+\caO\left(\frac{s_{j}^{3}}{\left(\mu_{j}\left(n\right)\right)^{\frac{3}{2}}}\right)\right)\mu_{j}\left(n\right)-\sum_{j=1}^{k}s_{j}\sqrt{\mu_{j}\left(n\right)}\right]\\
= & \exp\left[\sum_{j=1}^{k}\frac{s_{j}^{2}}{2}\right]\left(1+\caO\left(\sum_{j=1}^{k}\frac{1}{\left(\mu_{j}\left(n\right)\right)^{\frac{1}{2}}}\right)\right)
\rightarrow  \exp\left[\sum_{j=1}^{k}\frac{s_{j}^{2}}{2}\right],
\end{align*}
we then conclude the claim.
\end{proof}
Since Lemma \ref{lem:StaSad} entails $\theta_n \frac{x_{n,\alpha}^{m_{j}\left(n\right)}}{\sqrt{\mu_{m_j(n)}\left(n\right)}}\rightarrow0$
for all $j$ if $a_1 >1/7$, Theorem \ref{main thm 3} follows.

%
%\begin{proof}[Proof of Theorem \ref{main thm 3}]
%We only have to show that
%\[
%n^{-\frac{5}{12}}\alpha\left(n\right)^{-\frac{7}{12}}\frac{x_{n,\alpha}^{m_{j}\left(n\right)}}{\sqrt{\mu_{j}\left(n\right)}}\rightarrow0
%\]
%for all $j$, then we may apply Proposition \ref{prop:CLTcycle}. Consider
%\begin{align*}
%\frac{x_{n,\alpha}^{m_{j}\left(n\right)}}{\sqrt{\mu_{j}\left(n\right)}}=  \sqrt{m_{j}\left(n\right)x_{n,\alpha}^{m_{j}\left(n\right)}}
%\leq  \sqrt{\alpha\left(n\right)x_{n,\alpha}^{\alpha\left(n\right)}}
%=  \caO\left(\sqrt{n\log\left(\frac{n}{\alpha\left(n\right)}\right)}\right)
%\end{align*}
%which holds by Lemma \ref{lem:StaSad}. We conclude that
%\begin{align*}
%  n^{-\frac{5}{12}}\alpha\left(n\right)^{-\frac{7}{12}}\frac{x_{n,\alpha}^{m_{j}\left(n\right)}}{\sqrt{\mu_{j}\left(n\right)}}
%=  \caO\left(n^{\frac{1}{12}}\alpha\left(n\right)^{-\frac{7}{12}}\log\left(\frac{n}{\alpha\left(n\right)}\right)\right)
%\subset  o\left(1\right)
%\end{align*}
%since there is some $\delta>0$ such that $\frac{1}{\alpha\left(n\right)}=\caO\left(n^{-\frac{1}{7}-\delta}\right)$.
%The claim is proved.
%\end{proof}

\subsection{Proofs of Theorems \ref{thm:Shape} and \ref{thm:Fluc} and Equation \eqref{eq:IShape}}\label{sec:limitshape}
This section deals mainly with the proofs concerning the limit shape and fluctuations of cumulative cycle counts. 
%The last point is the proof of the limit shape for indices.
We begin with equation \eqref{eq:joint_limitshape},
where we plug in $s_i/\gamma(n)$ instead of $s_i$
%\subsection{Strategy of proof and auxiliary lemmata\label{sub:StrategyShape}}
%The proof of the limit shape and the convergence of the finite-dimensional
%distributions of the fluctuations will apply the saddle-point method.
%Note that, according to Equation \eqref{eq:joint_limitshape}, we need to calculate moment-generating functions of the
%form
%\begin{align}
%M_{n,\gamma}\left(\boldsymbol{s}\right)= & \mathbb{E}_{n,\alpha}\left[\prod_{i=1}^{m}\exp
%\left(\frac{s_{i}}{\gamma(n)}\sum_{k=1}^{b_{t_{i}}\left(n\right)}C_k\right)\right]
%=  \frac{1}{Z_{n,\alpha}}\left[z^{n}\right]\exp\left(\sum_{i=0}^{m}\mathrm{e}^{\sum_{l=i+1}^{m}\frac{s_l}{\gamma (n)}}\sum_{j=b_{t_{i}}\left(n\right)+1}^{b_{t_{i+1}}\left(n\right)}
%\frac{z^{j}}{j}\right)\label{eq:ShapeAC}
%\end{align}
%with $t_{0}:=0$ and $t_{m+1}:=1$. Here, 
for a real-valued sequence
$(\gamma(n))_{n\in\mathbb{N}}$. 
%Since the random variables in question diverge, we will always assume for the rescaling that $\gamma(n)\to\infty$ as $n\to\infty$. 
In the terms of Proposition \ref{prop:GenSaddle}, this means that $f_n =1$ and $q_{j,n}=\mathrm{e}^{\sum_{l=i(j)}^{m}\frac{s_l}{\gamma(n)}}$ where $i(j):=\min\left\{1\leq l \leq m: b_{t_l}(n)\geq j\right\}$. Intuitively, any index $l$ with $b_{t_l}(n) \geq j$ contributes a factor of $\exp \left(s_l /\gamma(n) \right)$ to $q_{j,n}$ since the number of cycles of length $j$ is counted in $K_{b_{t_l}}(n)$ in this case. 
%Note that we apply the proposition for fixed $\bsq$.
The saddle point of this problem is given by the unique positive solution 
$x_{n}\left(\boldsymbol{s}\right):=x_{n,\alpha,\gamma,\boldsymbol{t}}\left(\boldsymbol{s}\right)$
of
\begin{equation}
n=\sum_{i=0}^{m}\mathrm{e}^{\sum_{l=i+1}^{m}\frac{s_{l}}{\gamma(n)}}\sum_{j=b_{t_{i}}\left(n\right)+1}^{b_{t_{i+1}}\left(n\right)}\left(x_{n}\left(\boldsymbol{s}\right)\right)^{j}\label{eq:NewSaddleLimitshape}
.\end{equation}
Note that $x_{n}\left( \boldsymbol{0}\right)=x_{n,\alpha}$. 
%In order to apply Proposition \ref{prop:GenSaddle}, we now have to verify that $\boldsymbol{q}$ is admissible.
%The saddle-point method may indeed be applied in this situation. 
%This is the content of Lemma~\ref{lem:SPfunc}.
%This is done in 
Lemmata \ref{lem:NewSaddle} and \ref{lem:Lambda2} show that $\boldsymbol{q}$ is admissible and provide detailed information which will be useful for  investigating the moment generating function more closely.
\begin{lem}
\label{lem:NewSaddle}Let $\gamma\left(n\right)\to\infty$ with $\gamma\left(n\right)\geq \log(n)$ and $\boldsymbol{t}=\left(t_{i}\right)_{1\leq i\leq m}$
with $0=t_{0} \leq t_{1}<...<t_{i}<...<t_m\leq t_{m+1}=1$ and $s_{i}\geq0$ for
all $1\leq i\leq m$. Then
\begin{equation}
\alpha\left(n\right)\log\left(x_{n}\left(\boldsymbol{s}\right)\right)
=
\log\left(\frac{n}{\alpha\left(n\right)}\right) + \caO\left(\frac{\log\left(\log\left(n\right)\right)}{\log\left(n\right)}\right)
\label{eq:NewSaddleLimitshapeAs}
\end{equation}
locally uniformly in $\boldsymbol{s}$. In particular,
%\begin{equation}
$
\lim_{n\rightarrow\infty}x_{n}\left(\boldsymbol{s}\right)=1
$
%\label{eq:Con1}
%\end{equation}
locally uniformly in $\boldsymbol{s}$.
\end{lem}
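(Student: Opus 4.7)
The plan is to treat $x_n(\bss)$ as a perturbation of the unperturbed saddle point $x_{n,\alpha}$ of Lemma~\ref{lem:StaSad}, exploiting that $\gamma(n)\to\infty$. With $a_i(n,\bss) := \exp\bigl(\sum_{l=i+1}^{m} s_l/\gamma(n)\bigr) \geq 1$, rewrite \eqref{eq:NewSaddleLimitshape} as $n = h_0(x) + R(x)$, where
\begin{equation*}
h_0(x) := \sum_{j=1}^{\alpha(n)} x^j, \qquad R(x) := \sum_{i=0}^{m} \bigl(a_i(n,\bss)-1\bigr) \!\!\!\sum_{j=b_{t_i}(n)+1}^{b_{t_{i+1}}(n)} \!\!\! x^j.
\end{equation*}
For $\bss$ in a compact subset $K \subset [0,\infty)^m$, one has $a_i(n,\bss) - 1 = O_K(1/\gamma(n))$, and hence $R(x) \leq C(K)\, h_0(x)/\gamma(n)$ uniformly on any compact range of $x$. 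Note that at $\bss = \boldsymbol{0}$ one has $R \equiv 0$ and therefore $x_n(\boldsymbol{0}) = x_{n,\alpha}$, so the claim reduces to Lemma~\ref{lem:StaSad} in that case.

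The next step is a two-sided comparison of $x_n(\bss)$ with $x_{n,\alpha}$. The upper bound $x_n(\bss) \leq x_{n,\alpha}$ is immediate from $R \geq 0$ and the monotonicity of $h_0$. For the matching lower bound, the key quantitative input is \eqref{eq:lambda2alt}, which gives $h_0'(x_{n,\alpha}) = \sum_{j=1}^{\alpha(n)} j\, x_{n,\alpha}^{j-1} \sim n\alpha(n)/x_{n,\alpha}$. Evaluating $h_0 + R$ at $x_{n,\alpha}\bigl(1 - c/(\alpha(n)\gamma(n))\bigr)$ and combining this derivative estimate with $R(x_{n,\alpha}) = O_K(n/\gamma(n))$ shows that for a sufficiently large $c = c(K)$ the right-hand side drops strictly below $n$; hence
\begin{equation*}
x_n(\bss) = x_{n,\alpha}\bigl(1 + O_K(1/(\alpha(n)\gamma(n)))\bigr),
\end{equation*}
and therefore $\alpha(n) \log x_n(\bss) = \alpha(n) \log x_{n,\alpha} + O_K(1/\gamma(n))$ uniformly in $\bss \in K$. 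Because $\gamma(n) \geq \log n$, the additional error is $O(1/\log n)$, which is absorbed by the $O(\log\log n/\log n)$ term already present in \eqref{eq:StaSadAs}. This yields the asymptotic expansion \eqref{eq:NewSaddleLimitshapeAs}. The ``in particular'' statement follows at once: by \eqref{condition on alpha} one has $\alpha(n)\log x_n(\bss) = O(\log n) = o(\alpha(n))$, so $\log x_n(\bss) \to 0$ locally uniformly in $\bss$.

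The main obstacle is turning the cheap upper bound on $x_n(\bss)$ into a matching lower bound of the correct quantitative order. A purely qualitative continuity argument would only give $\alpha(n)\bigl(\log x_n(\bss) - \log x_{n,\alpha}\bigr) = o(1)$ without a rate, which does not obviously sit inside the stated $O(\log\log n /\log n)$ error uniformly over $\bss \in K$. This is exactly what the derivative estimate \eqref{eq:lambda2alt} supplies, and it is the same ingredient underlying the admissibility condition \eqref{eq:admisible}.
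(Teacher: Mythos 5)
Your argument is correct, but it takes a genuinely different route from the paper. The paper sandwiches the saddle point: it introduces a second auxiliary saddle point $\hat{x}_n(\bss)$ solving $\sum_{j=1}^{\alpha(n)}\hat{x}_n(\bss)^j = n\exp\bigl(-\sum_{i=1}^m s_i/\gamma(n)\bigr)$, observes that $s_i\geq 0$ forces $\hat{x}_n(\bss)\leq x_n(\bss)\leq x_{n,\alpha}$, and then applies the more general (uniform in the modified $n$) asymptotics of Manstavi\v{c}ius--Petuchovas to $\hat{x}_n(\bss)$; since $\gamma(n)\to\infty$, the shift of $n$ by the factor $\exp\bigl(-\sum_i s_i/\gamma(n)\bigr)$ only perturbs the right-hand side by $\caO_K(1/\gamma(n))$, locally uniformly in $\bss$. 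You instead keep only the trivial upper bound $x_n(\bss)\leq x_{n,\alpha}$ and obtain the matching lower bound $x_n(\bss)\geq x_{n,\alpha}\bigl(1-\caO_K(1/(\alpha(n)\gamma(n)))\bigr)$ by a perturbation/mean-value argument around $x_{n,\alpha}$, with \eqref{eq:lambda2alt} supplying the derivative estimate $h_0'(x_{n,\alpha})\sim n\alpha(n)/x_{n,\alpha}$; this is self-contained within Lemma~\ref{lem:StaSad} (so there is no circularity, since \eqref{eq:lambda2alt} is established independently of Lemma~\ref{lem:Lambda2}) and avoids invoking the external generalized lemma, at the cost of one routine detail you should make explicit: to push the value of $h_0+R$ below $n$ at $x_{n,\alpha}\bigl(1-c/(\alpha\gamma)\bigr)$ you need a lower bound on $h_0'$ slightly to the left of $x_{n,\alpha}$, which follows from $\bigl(1-c/(\alpha(n)\gamma(n))\bigr)^{\alpha(n)}\to 1$. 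Both routes end with $\alpha(n)\log x_n(\bss)=\alpha(n)\log x_{n,\alpha}+\caO_K(1/\gamma(n))$ and the same local uniformity. One caveat on the final absorption step: combined with \eqref{eq:StaSadAs}, your estimate yields the main term $\log\bigl(\tfrac{n}{\alpha(n)}\log\tfrac{n}{\alpha(n)}\bigr)$ rather than the $\log\tfrac{n}{\alpha(n)}$ printed in \eqref{eq:NewSaddleLimitshapeAs}, and the difference $\log\log\tfrac{n}{\alpha(n)}$ is not of order $\log\log n/\log n$; however, this mismatch is already present in the paper itself (compare \eqref{eq:StaSadAs} with \eqref{eq:xHutAs} and \eqref{eq:NewSaddleLimitshapeAs}), so it reflects the statement's bookkeeping rather than a flaw in your proof.
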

\begin{proof}
%[Proof of Lemma \ref{lem:NewSaddle}]
Let $\hat{x}_n(\bss)$ be the unique positive solution of $n\exp\left(-\sum_{i=1}^{m}\frac{s_i}{\gamma(n)}\right) = \sum_{j=1}^{\alpha(n)}(\hat{x}_{n}(\bss))^j$.
Since $s_{i}\geq0$ for all $i$, comparing equations (\ref{eq:StaSad})
and (\ref{eq:NewSaddleLimitshape}) yields
\begin{equation}
\hat{x}_{n}(\bss)     \leq x_{n}\left(\boldsymbol{s}\right)     \leq x_{n,\alpha(n)}.\label{eq:NewSadIneq}
\end{equation}
%where $\hat{x}_n$ is the unique positive solution of
%\[
%\frac{n}{\mathrm{e}^{\sum_{i=1}^{m}\frac{s_i}{\gamma(n)}}} = \sum_{j=1}^{\alpha(n)}\hat{x}_{n}^j.
%\]
%(with $n\mathrm{e}^{-\sum_{i=1}^{m}\frac{s_i}{\gamma(n)}}$ substituted for $n$)
%Applying Lemma \ref{lem:StaSad} to $x_{n,\alpha\left(n\right)}$ yields
%\[
%\limsup_{n\rightarrow\infty}\frac{\alpha(n)\log\left(x_{n}\left(\boldsymbol{s}\right)\right)}{\log\left(\frac{n}{\alpha(n)}\right)}\leq 1
%\]
%by monotonicity. 
By a slightly more general version of Lemma~\ref{lem:StaSad} (cf. \cite[Lemma 9]{manstavivcius2016local}), we also have
\begin{align}
\alpha(n)\log\left(\hat{x}_n(\bss)\right)
&=
\log\left(\frac{n\exp\left(-\sum_{i=1}^{m}\frac{s_i}{\gamma(n)}\right)}{\alpha(n)}\right)+ \caO\left(\frac{\log\left(\log\left(n\right)\right)}{\log\left(n\right)}\right)\nonumber\\
&= \log\left(\frac{n}{\alpha(n)}\right) + \caO\left(\frac{\log\left(\log\left(n\right)\right)}{\log\left(n\right)}\right)
\label{eq:xHutAs}
\end{align}
locally uniformly in $\bss$ due to $\gamma(n)\to\infty$.
Equation \eqref{eq:NewSaddleLimitshapeAs} then follows from \eqref{eq:NewSadIneq} together with Lemma \ref{lem:StaSad} and equation \eqref{eq:xHutAs}.
%\[
%\liminf_{n\rightarrow\infty}\frac{\alpha(n)\log\left(x_{n}\left(\boldsymbol{s}\right)\right)}{\log\left(\frac{n}{\alpha(n)}\right)}\geq 1.
%\]
%Now Equation (\ref{eq:Con1}) is a direct consequence.
\end{proof}
\begin{lem}
\label{lem:Lambda2}Let $\gamma\left(n\right)\to\infty$ with $\gamma\left(n\right)\geq \log(n)$ and $\boldsymbol{t}=\left(t_{1},...,t_{m}\right)^{T}$
with $0\leq t_{1}<...<t_{m}\leq1$ for $m\in\mathbb{N}$. Then, locally
uniformly in $\boldsymbol{s}=\left(s_{1},...,s_{m}\right)^{T}\in\left[0,\infty\right)^{m}$,
\[
\lambda_{2,n}=n\alpha\left(n\right)+\caO\left(\frac{n\alpha\left(n\right)}{\log\left(n\right)}\right).
\]
\end{lem}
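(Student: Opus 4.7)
The key observation is that $\lambda_{2,n}$ differs from $\alpha(n) n$ only by the contribution of cycle lengths substantially shorter than the cutoff. I would exploit the defining equation \eqref{eq:NewSaddleLimitshape} of $x_n(\bss)$ to write
\[
\alpha(n)\, n \;-\; \lambda_{2,n} \;=\; \sum_{j=1}^{\alpha(n)} q_{j,n}\,\bigl(\alpha(n) - j\bigr)\, x_n(\bss)^{j} \;=:\; R_n(\bss) \;\geq\; 0,
\]
which reduces the claim to showing $R_n(\bss) = \caO\bigl(n\alpha(n)/\log n\bigr)$ locally uniformly in $\bss$.

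The next step is to bound the weights uniformly. Since $\bss \geq \boldsymbol 0$ and $\gamma(n)\to\infty$, we have $1 \leq q_{j,n} \leq \exp\bigl(\sum_l s_l/\gamma(n)\bigr) =: Q_n(\bss)$, with $Q_n(\bss)\to 1$ locally uniformly in $\bss$. Writing $x := x_n(\bss)$, which by Lemma \ref{lem:NewSaddle} satisfies $x > 1$ for large $n$, the substitution $k = \alpha(n) - j$ yields
\[
R_n(\bss) \;\leq\; Q_n(\bss)\, x^{\alpha(n)} \sum_{k=1}^{\alpha(n)-1} k\, x^{-k} \;\leq\; Q_n(\bss)\, \frac{x^{\alpha(n)+1}}{(x-1)^{2}},
\]
using the standard bound $\sum_{k\geq 1} k y^k = y/(1-y)^2$ with $y = x^{-1}$.

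Finally, I would insert the asymptotics of $x = x_n(\bss)$ from Lemma \ref{lem:NewSaddle}, which (consistently with Lemma \ref{lem:StaSad} at $\bss = \boldsymbol 0$) gives $x_n(\bss)^{\alpha(n)} \asymp \frac{n}{\alpha(n)} \log\bigl(\frac{n}{\alpha(n)}\bigr)$ and $x_n(\bss) - 1 \sim \log x_n(\bss) \asymp \frac{1}{\alpha(n)}\log\bigl(\frac{n}{\alpha(n)}\bigr)$, locally uniformly in $\bss$. Multiplying,
\[
\frac{x^{\alpha(n)+1}}{(x-1)^{2}} \;=\; \caO\!\left(\frac{n\alpha(n)}{\log(n/\alpha(n))}\right) \;=\; \caO\!\left(\frac{n\alpha(n)}{\log n}\right),
\]
where the last step uses $\log(n/\alpha(n)) \geq (1-a_2)\log n$ from \eqref{condition on alpha}. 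Combined with the uniform bound on $Q_n(\bss)$, this gives the required estimate on $R_n(\bss)$.

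The main obstacle is verifying the claimed local uniformity in $\bss$ throughout. Fortunately, $\bss$ enters only through the small perturbation $\bss/\gamma(n)$ in the saddle-point equation; both the bound on $q_{j,n}$ and the asymptotics of $x_n(\bss)$ borrowed from Lemma \ref{lem:NewSaddle} are already formulated locally uniformly, so once one observes the sandwiching $\hat{x}_n(\bss) \leq x_n(\bss) \leq x_{n,\alpha}$ from \eqref{eq:NewSadIneq} to control both upper and lower $\asymp$-bounds on $x^{\alpha(n)}$ and on $x - 1$, the remaining work is a straightforward collection of estimates.
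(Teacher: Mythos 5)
Your proof is correct, and it takes a genuinely different route from the paper. The paper reduces w.l.o.g.\ to $m=1$, uses $\gamma(n)\geq\log n$ to pull the weights out of the sum as a multiplicative factor $1+\caO(1/\log n)$, evaluates the unweighted sum $\sum_{j\leq\alpha(n)}jx^j$ in closed form via the geometric series, and then inserts $x-1=\log x+\caO((x-1)^2)$ together with the asymptotics of Lemma \ref{lem:NewSaddle}. You instead keep the weights and subtract $\alpha(n)$ times the defining equation \eqref{eq:NewSaddleLimitshape}, getting the exact identity $n\alpha(n)-\lambda_{2,n}=\sum_{j}q_{j,n}(\alpha(n)-j)x_n(\bss)^j\geq 0$, and then bound this remainder by the tail estimate $Q_n(\bss)\,x^{\alpha(n)+1}/(x-1)^2$, which the sandwich \eqref{eq:NewSadIneq} together with Lemma \ref{lem:StaSad} shows to be $\caO(n\alpha(n)/\log n)$ locally uniformly. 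Your route buys several things: the leading term $n\alpha(n)$ comes out exactly, so only an order-of-magnitude bound on the remainder is needed --- in particular you never need to know $x^{\alpha(n)}$ and $x-1$ individually to high relative precision, whereas plugging separate asymptotics into the closed form controls the leading term a priori only up to a relative error of order $\log\log n/\log n$ unless one feeds the saddle-point equation back in; you handle general $m$ directly rather than ``w.l.o.g.\ $m=1$''; and you never actually use the hypothesis $\gamma(n)\geq\log n$, since local boundedness of the weights (already guaranteed by $\gamma(n)\to\infty$) suffices. The only point to state carefully is the source of the asymptotics of $x_n(\bss)$: as literally stated, Lemma \ref{lem:NewSaddle} gives $x_n(\bss)^{\alpha(n)}\approx n/\alpha(n)$ without the logarithmic factor, but, as you observe, the two-sided control you need (an upper bound on $x^{\alpha(n)}$ and a lower bound on $x-1$) follows from \eqref{eq:NewSadIneq} combined with Lemma \ref{lem:StaSad} and its generalization, and either version of the asymptotics yields the required $\caO(n\alpha(n)/\log n)$ bound.
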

\begin{proof}
%[Proof of Lemma \ref{lem:Lambda2}]
W.l.o.g., let $0<t_{1}<1$ and $m=1$. As the following calculations
will show, larger values of $m$ pose no particular problem since
they only produce additional terms of similar structure and $b_{t_{k}}\left(n\right)\sim\alpha\left(n\right)$
for all $k\geq 1$ in this case. Moreover, let $x:=x_{n,\alpha,\gamma,\boldsymbol{t}}\left(\boldsymbol{s}\right)$.
Then, using that $\gamma\left(n\right)\geq \log(n)$, we obtain
\begin{align*}
\lambda_{2,n}
= & 
\mathrm{e}^{\frac{s_1}{\gamma(n)}}\sum_{j=1}^{b_{t_{1}}\left(n\right)}jx^{j}+\sum_{j=b_{t_{1}}\left(n\right)+1}^{\alpha\left(n\right)}jx^{j}
=  
\left(\sum_{j=1}^{\alpha(n)} jx^{j}\right)\left(1+\caO\left(\frac{1}{\log(n)}\right)\right)\\
=&
 \left(\alpha(n)\frac{x^{\alpha(n)+1}-1}{x-1} +\frac{x^{\alpha(n)+1}-1}{(x-1)^2}\right)\left(1+\caO\left(\frac{1}{\log(n)}\right)\right).
\end{align*}
Since $x\to1$ as $n\to \infty$, we have $x-1= \log(x) + \caO((x-1)^2)$. Using this together with Lemma~\ref{lem:NewSaddle} completes the proof.
\end{proof}
%
%The assumption about $\gamma$ could in principle be weakened considerably, but $\gamma(n)=\Omega\left(\log(n)\right)$ is sufficient for our purposes.
% and will allow us to provide an error term in Lemma \ref{lem:Lambda2}
Having proved that $\boldsymbol{q}$ is admissible, Proposition \ref{prop:GenSaddle} yields, for $\gamma\left(n\right)\geq \log(n)$, $\boldsymbol{t}=\left(t_{1},...,t_{m}\right)^{T}$ and fixed $\boldsymbol{s}=\left(s_{1},...,s_{m}\right)^{T}\in\left[0,\infty\right)^{m}$, 
%\begin{lem}
%\label{lem:SPfunc}Let $\boldsymbol{s}:=\frac{\boldsymbol{s}}{\gamma\left(n\right)}$
%with $\gamma\left(n\right)=\Omega \left(\log(n)\right)$ and $\boldsymbol{t}=\left(t_{1},...,t_{m}\right)^{T}$
%for $m\in\mathbb{N}$. Then, locally uniformly in $\boldsymbol{s}=\left(s_{1},...,s_{m}\right)^{T}\in\left[0,\infty\right)^{m}$,
\[
M_{n,\gamma}\left(\boldsymbol{s}\right)
:=
\mathbb{E}_{n,\alpha}\left[\exp\left(\sum_{i=1}^{m}\frac{s_i}{\gamma(n)}K_{b_{t_{i}}\left(n\right)}\right)\right]
=
\frac{1}{Z_{n,\alpha}}\frac{1}{\sqrt{2\pi n\alpha (n)}}\exp\left[h_{n}\left(\boldsymbol{s}\right)\right]\left(1+o\left(1\right)\right),
\]
where $Z_{n,\alpha}$ is the normalizing constant in \eqref{eq:cNorm} such that $M_{n,\gamma}(\boldsymbol{0})=1$ and 
\begin{align}
 h_{n}\left(\boldsymbol{s}\right)
:=
h_{n,\alpha,\gamma,\boldsymbol{t}}\left(\boldsymbol{s}\right):=\sum_{i=0}^{m}\mathrm{e}^{\sum_{l=i+1}^{m}\frac{s_l}{\gamma (n)}}\sum_{j=b_{t_{i}}\left(n\right)+1}^{b_{t_{i+1}}\left(n\right)}\frac{\left(x_{n,\alpha,\gamma,\boldsymbol{t}}\left(\boldsymbol{s}\right)\right)^{j}}{j}-n\log\left(x_{n,\alpha,\gamma,\boldsymbol{t}}\left(\boldsymbol{s}\right)\right).
\label{eq:h(s)}
\end{align}
%\end{lem}
%Lemma \ref{lem:SPfunc} is proved by applying Propostion \ref{prop:GenSaddle}. We therefore assemble auxiliary lemmata which allow us to verify the admissibility of a suitable choice of $\boldsymbol{q}$.
%The first lemmata focus on properties of the saddle point.
%We can now prove Lemma \ref{lem:SPfunc}.
%\begin{proof}[Proof of Lemma \ref{lem:SPfunc}]
%We apply Proposition \ref{prop:GenSaddle} with $f_n(z)=1$ and
%\[
%q_j(n)=\mathrm{e}^{\sum_{i=i_j(n)}^m \frac{s_i}{\gamma(n)}},
%\]
%where $i_j(n)=\min \left\{1\leq i \leq m: b_{t_i}(n)\geq j\right\}$. Lemmata \ref{lem:NewSaddle} and \ref{lem:Lambda2} show that $\boldsymbol{q}$ is admissible, so the proposition can be applied.
%\end{proof}
The next step is to extract more information by investigating the functions $h_{n}$.
The proofs will rest on a Taylor expansion of $h_{n}$ about $\boldsymbol{0}$,
so we need expressions and asymptotics for the derivatives of $h_{n}$.
%These in turn depend on the derivatives of the saddle point and require
%the introduction of further auxiliary lemmata.
We will prove in Section \ref{sub:Beweise} for $\gamma(n)\geq \log (n)$:
\begin{itemize}
\item[(i)] $\boldsymbol{s}\mapsto h_n(\boldsymbol{s})$ is infinitely often differentiable,
\item[(ii)] $\partial_{s_i} h_n(\boldsymbol{0})=\frac{1}{\gamma(n)}\sum_{j=1}^{b_{t_i}(n)}\frac{x_{n,\alpha}^j}{j}=t_i \frac{n}{\gamma(n)\alpha(n)}\left( 1 + o(1) \right)$,
\item[(iii)] $\partial_{s_{i_2}}\partial_{s_{i_1}} h_n(\boldsymbol{0}) = t_{i_2} ( 1 - t_{i_1} ) \frac{n}{(\gamma(n))^2\alpha(n)}\left( 1 + o(1) \right)$ for $i_2 \leq i_1$,
\item[(iv)] $\partial_{s_{i_2}}\partial_{s_{i_1}} h_n(\boldsymbol{s}) = \caO \left(\frac{n}{(\gamma(n))^2\alpha(n)} \right)$ locally uniformly in $\boldsymbol{s}$,
\item[(v)] $\partial_{s_{i_3}}\partial_{s_{i_2}} \partial_{s_{i_1}} h_n(\boldsymbol{s})= \caO \left(\frac{n}{(\gamma(n))^3\alpha(n)} \right)$  locally uniformly in $\boldsymbol{s}$.
\end{itemize}
Due to $M_{n,\gamma}(\boldsymbol{0}) = 1$, for fixed $\bss$ we therefore arrive at
\begin{equation}\label{eq:ShapeZ}
M_{n,\gamma}(\boldsymbol{s}) = \exp\left(\nabla h_n(\boldsymbol{0}) \cdot \boldsymbol{s} + \caO\left(\frac{n}{\gamma^2\alpha} \left| \boldsymbol{s} \right|^2 \right)\right) \left(1 + o(1)\right)
\end{equation}
and
\begin{equation}\label{eq:FlukZ}
M_{n,\gamma}(\boldsymbol{s}) = \exp\left(\nabla h_n(\boldsymbol{0}) \cdot \boldsymbol{s} + \frac{1}{2} \left\langle \boldsymbol{s}, H_{h_n}(\boldsymbol{0}) \boldsymbol{s} \right\rangle + \caO\left(\frac{n}{\gamma^3\alpha} \left| \boldsymbol{s} \right|^3 \right)\right) \left(1 + o(1)\right).
\end{equation}
%which hold locally uniformly in $\boldsymbol{s}$.
So, by equation \eqref{eq:ShapeZ},
\begin{equation}\label{eq:Shape}
\lim_{n\to\infty}\mathbb{E}_{n,\alpha}\left[\exp\left(\sum_{i=1}^{m}\frac{s_{i}}{n/\alpha(n)}K_{b_{t_{i}}\left(n\right)}\right)\right]
=\lim_{n\to\infty}M_{n,\frac{n}{\alpha(n)}}(\boldsymbol{s})
=\exp\left(\sum_{i=1}^{m}s_{i}t_{i}\right),
\end{equation}
and, by equation \eqref{eq:FlukZ},
\begin{align}\label{eq:Fluk}
& \lim_{n\to\infty}\mathbb{E}_{n,\alpha}\left[\exp\left(\sum_{i=1}^{m}\frac{s_{i}}{\sqrt{n/\alpha(n)}}\left(K_{b_{t_{i}}\left(n\right)} - \sum_{j=1}^{b_{t_i}(n)}\frac{x_{n,\alpha}^j}{j}   \right)\right)\right] \\
= & \lim_{n\to\infty}M_{n,\sqrt{n/\alpha(n)}}(\boldsymbol{s}) \exp\left(- \nabla h_n(\boldsymbol{0}) \cdot \boldsymbol{s}   \right)
=\exp\left(\frac{1}{2}\left<\boldsymbol{s},A\left(\boldsymbol{t}\right)\boldsymbol{s}\right>\right),\nonumber
\end{align}
where $A\left(\boldsymbol{t}\right)=\left(A_{i_{1},i_{2}}\right)$
is symmetric with
$
A_{i_{1},i_{2}}=t_{i_{2}}\left(1-t_{i_{1}}\right)
$
for $i_{2}\leq i_{1}$. Note that $A(\boldsymbol{t})$ is the covariance matrix of the Brownian bridge.
We can now give the
\begin{proof}[Proof of Theorem \ref{thm:Shape}]
We apply arguments of the proof of Corollary 3.4 in \cite{CiZe13}. Let $\epsilon >0$ and choose $0=t_0 < t_1 <...<t_l =1$ such that $t_{j+1}-t_j < \frac{\epsilon}{2}$. Then, due to monotonicity,
$
\left|\frac{K_{b_t\left(n\right)}}{n/\alpha(n)}-t\right|>\epsilon
$
for some $t\in[0,1]$ implies the existence of an index $j$ such that
$
\left|\frac{K_{b_{t_j}\left(n\right)}}{n/\alpha(n)}-t_j\right|>\frac{\epsilon}{2}.
$
%So
Then,
%By equations \eqref{eq:Shape} and \eqref{eq:Fluk}, we have
\begin{equation}\label{eq:Shapebeweis}
\mathbb{P}_{n,\alpha}\left[\sup_{t\in [0,1]}\left| \frac{K_{b_t\left(n\right)}}{n/\alpha(n)}-t\right|>\epsilon \right]
\leq \sum_{j=1}^l\mathbb{P}_{n,\alpha}\left[\left|\frac{K_{b_{t_j}\left(n\right)}}{n/\alpha(n)}-t_j\right|>\frac{\epsilon}{2}\right]
\xrightarrow{n\to\infty} 0
\end{equation}
by equations \eqref{eq:Shape} and \eqref{eq:Fluk}.
%holds. By equations \eqref{eq:Shape} and \eqref{eq:Fluk}, each summand in \eqref{eq:Shapebeweis} converges to $0$ and the claim is therefore proved.
\end{proof}
Equation \eqref{eq:Fluk} establishes the convergence
of the finite-dimensional distributions of the fluctuations to those
of the Brownian bridge. In order to show that, under $\mathbb{P}_{n,\alpha}$,
the fluctuations $\left(L_{t}\left(n\right)\right)_{t\in\left[0,1\right]}$ defined in \eqref{eq:FlukDef}
%\[
%\left(L_{n}\left(t\right)\right)_{t\in\left[0,1\right]}=\left(\frac{K_{b_{t}\left(n\right)}-\sum_{j=1}^{b_{t}\left(n\right)}\frac{x_{n,\alpha}^j}{j}}{\sqrt{n/\alpha\left(n\right)}}\right)_{t\in\left[0,1\right]}
%\]
converge as a process to the Brownian bridge, we also have to prove
tightness. 
%Then Theorem \ref{thm:Fluc} is proved. 
We will apply the criterion that there are $N\in\mathbb{N},c>0$, and a nondecreasing continuous function $H$ on $[0,1]$ such that
\begin{equation}
\mathbb{E}_{n,\alpha}\left[\left|L_{t}\left(n\right)-L_{t_1}\left(n\right)\right|^{2}\left|L_{t_2}\left(n\right)-L_{t}\left(n\right)\right|^{2}\right] \leq c\left|H(t_{2})-H(t_{1})\right|^{2}\label{eq:tightnesscriterion}
\end{equation}
for all $0\leq t_{1}\leq t\leq t_{2}\leq 1$ and all $n\geq N$, which is an instance of \cite[Equation (13.14)]{Billingsley1999}.
\begin{prop}\label{prop:Straff}
The sequence of processes $\left(L_{t}\left(n\right)\right)_{t\in [0,1]}$
under $\mathbb{P}_{n,\alpha}$ is tight in $\mathcal{D}\left[0,1\right]$.
\end{prop}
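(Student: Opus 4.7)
The plan is to verify Billingsley's moment criterion \eqref{eq:tightnesscriterion} with $H(t) = t$ by reducing it to a uniform bound on fourth moments of single increments. By Cauchy--Schwarz,
\[
\mathbb{E}_{n,\alpha}\bigl[|L_t - L_{t_1}|^2 |L_{t_2} - L_t|^2\bigr] \leq \bigl(\mathbb{E}_{n,\alpha}[(L_t - L_{t_1})^4]\bigr)^{1/2}\bigl(\mathbb{E}_{n,\alpha}[(L_{t_2} - L_t)^4]\bigr)^{1/2},
\]
so it suffices to establish
\[
\mathbb{E}_{n,\alpha}\bigl[(L_t(n) - L_s(n))^4\bigr] \leq c\,(t-s)^2
\]
for all $0 \leq s \leq t \leq 1$ and $n \geq N$. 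Indeed, this would give $\mathbb{E}_{n,\alpha}[\cdots] \leq c(t-t_1)(t_2-t) \leq \tfrac{c}{4}(t_2-t_1)^2$, which is \eqref{eq:tightnesscriterion}.

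To obtain the fourth-moment bound, I plan to adapt the saddle-point expansion developed in Section~\ref{sec:limitshape}. Specialising \eqref{eq:joint_limitshape} to $m = 2$, $(t_1, t_2) = (s, t)$, followed by a linear change of variables in the exponents, yields the moment generating function of the increment $K_{b_t(n)} - K_{b_s(n)}$; rescaling by $\sqrt{n/\alpha(n)}$ gives the MGF of $L_t - L_s$. This is covered by Proposition~\ref{prop:GenSaddle} applied to the admissible triangular array $q_{j,n} = 1$ for $j \leq b_s(n)$ or $b_t(n) < j \leq \alpha(n)$, and $q_{j,n} = \mathrm{e}^\tau$ for $b_s(n) < j \leq b_t(n)$. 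The resulting expression is of the form $\exp[\tilde h_n(\tau)]$ up to common factors, analogously to~\eqref{eq:h(s)}. Extending the derivative estimates (ii)--(v) preceding~\eqref{eq:ShapeZ} to third and fourth order (a routine but lengthy computation in the spirit of Lemmas~\ref{lem:NewSaddle} and~\ref{lem:Lambda2}) should show that the second cumulant of $L_t - L_s$ is $(t-s)(1-(t-s)) + o(1)$, matching the Brownian-bridge variance, while the third and fourth cumulants are of order $\caO(\alpha(n)(t-s)/n)$. The standard cumulant-to-moment expansion then gives $\mathbb{E}_{n,\alpha}[(L_t - L_s)^4] = 3(t-s)^2(1+o(1)) + \caO(\alpha(n)(t-s)/n)$, which is dominated by $c(t-s)^2$ as soon as $t-s$ is not too small.

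The main obstacle will be making the error terms uniform in $(s,t)$, especially when $t-s$ is comparable to or smaller than $\alpha(n)/n$. Here the integer-valued nature of the definition~\eqref{eq:kDef} of $b_t(n)$ comes to the rescue: whenever $s$ and $t$ lie in the same step of the staircase $t \mapsto b_t(n)$, the increment $L_t(n) - L_s(n)$ vanishes identically and the criterion is trivial. It then remains to argue that, for pairs $(s,t)$ with $b_t(n) > b_s(n)$, the constants in Proposition~\ref{prop:GenSaddle} and in the auxiliary Lemmas~\ref{lem:NewSaddle} and~\ref{lem:Lambda2} can be taken uniform; inspection of their proofs suggests this is indeed the case, since the only ingredients depending on $(s,t)$ are the triangular array $\boldsymbol{q}$ and the shifted saddle point, both varying continuously with the range of tilted cycle lengths.
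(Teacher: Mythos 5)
Your reduction via Cauchy--Schwarz loses exactly the structure that makes Billingsley's criterion \eqref{eq:tightnesscriterion} work here, and the intermediate bound you reduce to is false. You claim $\mathbb{E}_{n,\alpha}\bigl[(L_t(n)-L_s(n))^4\bigr]\leq c\,(t-s)^2$ uniformly in $s<t$ and $n\geq N$. Take $s<t$ straddling exactly one jump of the staircase $t\mapsto b_t(n)$, with $t-s$ much smaller than the jump spacing (which is of order $\log(n/\alpha(n))/\alpha(n)$ for $t$ bounded away from $0$). Then $K_{b_t(n)}-K_{b_s(n)}=C_{b_t(n)}$, so $L_t(n)-L_s(n)$ is, up to centering, $C_{b_t(n)}/\sqrt{n/\alpha(n)}$: a nondegenerate random variable whose fourth moment does not shrink as $t-s\to0$ for fixed $n$ (it is of order $(\log(n/\alpha))^2/\alpha^2$, not $o((t-s)^2)$). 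So the single-increment bound fails precisely in the regime your own caveat (``as soon as $t-s$ is not too small'') flags, and your proposed fix --- dismissing pairs in the same step --- does not cover this one-jump case. The product form of the criterion is what rescues the argument: if $b_{t_2}(n)-b_{t_1}(n)\leq1$, one of the two factors vanishes identically and $I_n=0$; if $b_{t_2}(n)-b_{t_1}(n)\geq2$, then $t_2-t_1$ is at least of order $\log(n/\alpha(n))/\alpha(n)$, and this lower bound is used to absorb the non-$(t-s)$-proportional contributions. This is exactly how the paper argues, and it is why the paper bounds the mixed moment $\mathbb{E}\bigl[|L_t-L_{t_1}|^2|L_{t_2}-L_t|^2\bigr]$ directly rather than fourth moments of single increments. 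A repaired version of your route would need a bound like $\mathbb{E}[(L_t-L_s)^4]\leq c\bigl((t-s)+\log(n/\alpha)/\alpha\bigr)^2$ together with the spacing lower bound on $t_2-t_1$, which is essentially reconstructing the paper's product argument.

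A second, more technical gap: you propose to read off the fourth cumulant of the increment from the saddle-point asymptotics of the moment generating function (the $h_n$ machinery and Proposition \ref{prop:GenSaddle}). Those asymptotics carry multiplicative $(1+o(1))$ errors at fixed $\bss$ that are not controlled in a way that permits differentiating four times at $\bss=\boldsymbol{0}$, so uniform cumulant-to-moment bounds do not follow from them. The paper avoids this by expressing the mixed moment exactly as a coefficient extraction (derivatives of the exact generating function $F_n$ at $0$), which produces the prefactor $G_{n,t_1,t}\,G_{n,t,t_2}$; this prefactor fails admissibility condition (iii), so the contour-integral estimate is then redone by hand, using Lemma \ref{lem:StraffHilfe} to get the $(t-t_1)$ and $(t_2-t)$ factors. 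Some version of this explicit moment computation (or a genuinely uniform control of the MGF errors in $\bss$) is needed; as written, your plan does not yield the quantitative, uniform moment bound that tightness requires.
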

%The proof of the proposition needs Lemma \ref{lem:partLambda0}. Since some results will be needed in Section \ref{sub:Beweise}, we prove more than is strictly necessary in the present context.
In this paper we only prove tightness of $\left(L_{t}\left(n\right)\right)_{t\in [\delta,1]}$ for $0<\delta <1$ since the proof of the general case (in particular suitably generalizing Lemma \ref{lem:StraffHilfe} below) is very technical. The main reason for this is that one has to deal with the divergence of $(\log (t))^\prime=\frac{1}{t}$ as $t\to 0$ in the definition of $b_t(n)$. The proof of the general statement can be found in \cite[Theorem 2.7.5]{S18}.\\
The arguments in the proof of Proposition \ref{prop:Straff} further show that, by uniform integrability, we can substitute the mean $\mathbb{E}_{n,\alpha}[L_t(n)]$ for $\sum_{j=1}^{b_{t}(n)}\frac{x^{j}_{n,\alpha}}{j}$ in \eqref{eq:FlukDef} if we only consider convergence of the finite-dimensional distributions.
% \begin{color}{blue}
%    We did not actually subtract the mean, but rather the 
%    expression $\sum_{j=1}^{b_{t}(n)}\frac{x^{j}_{n,
%    \alpha}}{j}$. By Proposition~\ref{mean}, it is tempting to claim
%    that this is equal to $\bbE [ K_{b_{t}(n)}]$, but 
%    we have to be careful since each term in Proposition \ref{mean}
%    carries a relative error and these may pile up. However, 
%    once we have proved Theorem \ref{thm:Fluc}, 
%    we can conclude that 
%    $\bbE(L_t(n)^2)$ is bounded in $n$ for all $t$, and thus in 
%    particular converges in $L^1$. It now follows by taking 
%    expectations that we may indeed replace 
%    the sum in the statement
%    of the theorem by $\bbE[K_{b_t(n)}]$. 
% \end{color}
%    \begin{color}{red}
%     Diese Bemerkung geh\"ort in das Beweiskapitel, ich muss noch \"uberlegen wo.
%    \end{color}
We are going to need the following
\begin{lem}\label{lem:StraffHilfe}
Let $0<\delta<1$. Then there are $N\in\mathbb{N}$ and $c>0$ such that
$$
\sum_{j=b_{t_1}(n)+1}^{b_{t_2}(n)} \frac{x_{n,\alpha}^j}{j} \leq c \frac{n}{\alpha(n)} (t_2-t_1)
$$
for all $n\geq N$ and $\delta\leq t_1 <t_2\leq 1$ satisfying $b_{t_2}(n)-b_{t_1}(n)\geq 2$.
\end{lem}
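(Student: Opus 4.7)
Set $L := \log(n/\alpha(n))$. The plan is to bound the sum crudely by (number of summands) $\times$ (uniform upper bound on each summand), and to exploit the hypothesis $b_{t_2}(n) - b_{t_1}(n) \geq 2$ precisely to absorb the off-by-one error coming from the floor function in the definition of $b_t(n)$.

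First I would control each individual term using Lemma \ref{lem:StaSad}. For $t \in [\delta,1]$ and $n$ large, the maximum in the definition of $b_t(n)$ is redundant, so $b_t(n) = \alpha(n) + \lfloor \log(t)\alpha(n)/L\rfloor$; since $L \to \infty$ this gives $b_{t_1}(n) \geq b_\delta(n) \geq \alpha(n)/2$ eventually. Because $x_{n,\alpha} \geq 1$ and $j \leq b_{t_2}(n) \leq \alpha(n)$, Lemma \ref{lem:StaSad} yields $x_{n,\alpha}^j \leq x_{n,\alpha}^{\alpha(n)} \leq C_1 \, nL/\alpha(n)$, and combined with $j \geq \alpha(n)/2$ we obtain the uniform bound
\[
\frac{x_{n,\alpha}^j}{j} \leq \frac{C_2 \, n L}{\alpha(n)^2}
\]
for all $j$ in the summation range, with $C_2$ depending only on the constants in Lemma \ref{lem:StaSad}.

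Next I would count the summands. Using $\log(1+u)\leq u$ for $u\geq 0$ together with $t_1 \geq \delta$,
\[
b_{t_2}(n) - b_{t_1}(n) \leq \frac{\log(t_2/t_1)\alpha(n)}{L} + 1 \leq \frac{(t_2-t_1)\alpha(n)}{\delta L} + 1.
\]
The hypothesis $b_{t_2}(n) - b_{t_1}(n) \geq 2$ forces $\log(t_2/t_1)\alpha(n)/L \geq 1$, whence $t_2 - t_1 \geq t_1 (e^{L/\alpha(n)} - 1) \geq \delta L/\alpha(n)$, so $(t_2-t_1)\alpha(n)/(\delta L) \geq 1$ and the $+1$ can be absorbed. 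Multiplying by the per-term bound from the previous paragraph then yields the claim with $c = 2C_2/\delta$.

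The only real obstacle is the correct use of the hypothesis $b_{t_2}(n) - b_{t_1}(n) \geq 2$. Without it, the sum could consist of a single term of order $nL/\alpha(n)^2$, while $t_2 - t_1$ is permitted to be arbitrarily small (merely straddling one floor break of $b_t(n)$), so no bound of the form $c(n/\alpha(n))(t_2-t_1)$ could possibly hold. The hypothesis is exactly calibrated to force $t_2 - t_1 \gtrsim L/\alpha(n)$, which matches the magnitude of a single term and closes the argument.
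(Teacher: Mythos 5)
Your proof is correct, and the logical skeleton matches the paper's: both restrict to $j\geq b_\delta(n)\geq\alpha(n)/2$, bound $b_{t_2}(n)-b_{t_1}(n)$ via the floor estimate by $\frac{\alpha(n)}{\log(n/\alpha(n))}\log(t_2/t_1)+1$ with $\log(t_2/t_1)\leq\delta^{-1}(t_2-t_1)$, and use the hypothesis $b_{t_2}(n)-b_{t_1}(n)\geq 2$ exactly as you do, to guarantee $\frac{\alpha(n)}{\log(n/\alpha(n))}\log(t_2/t_1)\geq 1$ so that the additive $+1$ can be absorbed. Where you diverge is the estimation of the sum itself: the paper factors out $\frac{2}{\alpha(n)}x_{n,\alpha}^{b_{t_1}(n)+1}$, evaluates the remaining geometric series in closed form, and then controls $x_{n,\alpha}^{\,b_{t_2}(n)-b_{t_1}(n)}-1$ by an exponential expansion ($\mathrm{e}^{u}-1\leq c_1 u$ for bounded $u$, using $\alpha(n)\log x_{n,\alpha}\sim\log(n/\alpha(n))$), finishing with $\frac{x_{n,\alpha}^{\alpha(n)+1}}{\alpha(n)(x_{n,\alpha}-1)}\approx\frac{n}{\alpha(n)}$ from Lemma \ref{lem:StaSad}; you instead use the cruder bound (number of summands)$\times$(largest term), with per-term bound $\mathcal{O}\bigl(n\log(n/\alpha(n))/\alpha(n)^2\bigr)$ and count $\mathcal{O}\bigl((t_2-t_1)\alpha(n)/(\delta\log(n/\alpha(n)))\bigr)$, the logarithms cancelling. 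Your route is more elementary (no geometric-sum manipulation, no $x_{n,\alpha}-1\sim\log x_{n,\alpha}$ step) at the cost of a worse constant, which is immaterial here since $c$ may depend on $\delta$; your closing remark correctly identifies that the hypothesis is what rules out a single term of size $\asymp n\log(n/\alpha(n))/\alpha(n)^2$ paired with an arbitrarily small $t_2-t_1$.
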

\begin{proof}
Let $N_1$ be such that $b_{\delta}(n)\geq \alpha(n)/2$ for all $n\geq N_1$. Then,
\begin{align*}
\sum_{j=b_{t_1}(n)+1}^{b_{t_2}(n)} \frac{x_{n,\alpha}^j}{j} &\leq \frac{2}{\alpha(n)} x_{n,\alpha}^{b_{t_1}(n)+1}\sum_{j=0}^{b_{t_2}(n)-b_{t_1}(n)-1}x_{n,\alpha}^j
\leq \frac{2}{\alpha(n)} x_{n,\alpha}^{\alpha(n)+1} \frac{x^{b_{t_2}(n)-b_{t_1}(n)}-1}{x_{n,\alpha} -1}\\
&\leq \frac{2}{\alpha(n)} x_{n,\alpha}^{\alpha(n)+1} \frac{\exp\left(\log(x_{n,\alpha})\left[\frac{\alpha(n)(\log(t_2)-\log(t_1))}{\log(n/\alpha(n))}+1\right]\right)-1}{x_{n,\alpha} -1}.
\end{align*}
By Lemma \ref{lem:StaSad}, $\log(x_{n,\alpha})\frac{\alpha(n)}{\log(n/\alpha(n))}\to 1$ as $n\to\infty$. Moreover, $\log(t_2)-\log(t_1)\leq \delta^{-1}(t_2-t_1)\leq \delta^{-1}$ and $\alpha(n) (\log(n/\alpha(n)))^{-1}(\log(t_2)-\log(t_1))\geq 1$ by assumption. Hence, there are $N\geq N_1, c_1>0$ such that
\begin{align*}
\exp\left(\log(x_{n,\alpha})\left[\frac{\alpha(n)(\log(t_2)-\log(t_1))}{\log(n/\alpha(n))}+1\right]\right) &\leq 1+c_1(\log(t_2)-\log(t_1))\
%\
%& 
\leq 1+ \frac{c_1}{\delta} (t_2 - t_1).
\end{align*}
Thus, by Lemma \ref{lem:StaSad},
$$
\sum_{j=b_{t_1}(n)+1}^{b_{t_2}(n)} \frac{x_{n,\alpha}^j}{j} \leq \frac{2c_1}{\delta}\frac{1}{\alpha(n)} \frac{x_{n,\alpha}^{\alpha(n)+1}}{x_{n,\alpha}-1} (t_2 -t_1)
\leq c \frac{n}{\alpha(n)}(t_2 - t_1)
$$ 
for some $c>0$.
\end{proof}
\begin{proof}[Proof of Proposition \ref{prop:Straff}]
We prove equation (\ref{eq:tightnesscriterion}) with $H=\mathrm{id}$ for $\delta\leq t_1\leq t\leq t_2$. By definition,
\begin{align}
  I_{n}\nonumber &
=  \mathbb{E}_{n,\alpha}\left[\left|L_{t}\left(n\right)-L_{t_1}\left(n\right)\right|^{2}\left|L_{t_2}\left(n\right)-L_{t}\left(n\right)\right|^{2}\right]\nonumber\\
&=  \mathbb{E}_{n,\alpha}\left[\left(\frac{K_{b_{t}\left(n\right)}-K_{b_{t_{1}}\left(n\right)}-\sum_{j=b_{t_{1}}\left(n\right)+1}^{b_{t}\left(n\right)}\frac{x_{n,\alpha}^{j}}{j}}{\sqrt{n/\alpha\left(n\right)}}\right)^{2}\left(\frac{K_{b_{t_{2}}\left(n\right)}-K_{b_{t}\left(n\right)}-\sum_{j=b_{t}\left(n\right)+1}^{b_{t_{2}}\left(n\right)}\frac{x_{n,\alpha}^{j}}{j}}{\sqrt{n/\alpha\left(n\right)}}\right)^{2}\right].\label{eq:tightproduct}
\end{align}
We only have to deal with $t_1,t_2$ such that $b_{t_{2}}\left(n\right)-b_{t_{1}}\left(n\right)\geq 2$ because $I_n=0$ otherwise.
Consider the moment generating function
\begin{align*}
F_{n}\left(s_{1},s_{2}\right):= & \mathbb{E}_{n,\alpha}\left[\exp\left(s_{1}\frac{K_{b_{t}\left(n\right)}-K_{b_{t_{1}}\left(n\right)}}{\sqrt{n/\alpha\left(n\right)}}+s_{2}\frac{K_{b_{t_{2}}\left(n\right)}-K_{b_{t}\left(n\right)}}{\sqrt{n/\alpha(n)}}\right)\right]\\
= & \frac{1}{Z_{n,\alpha}}\left[z^{n}\right]\exp\left(\sum_{j=1}^{b_{t_{1}}\left(n\right)}\frac{z^{j}}{j}+\mathrm{e}^{\sqrt{\frac{\alpha(n)}{n}}s_1}\sum_{j=b_{t_{1}}\left(n\right)+1}^{b_{t}\left(n\right)}\frac{z^{j}}{j}+\mathrm{e}^{\sqrt{\frac{\alpha(n)}{n}}s_2}\sum_{j=b_{t}\left(n\right)+1}^{b_{t_{2}}\left(n\right)}\frac{z^{j}}{j}+\sum_{j=b_{t_{2}}\left(n\right)+1}^{\alpha\left(n\right)}\frac{z^{j}}{j}\right).
\end{align*}
Then $F_{n}$ is differentiable and
\begin{equation}
\mathbb{E}_{n,\alpha}\left[\left(\frac{K_{b_{t}\left(n\right)}-K_{b_{t_{1}}\left(n\right)}}{\sqrt{n/\alpha\left(n\right)}}\right)^{m_{1}}\left(\frac{K_{b_{t_{2}}\left(n\right)}-K_{b_{t}\left(n\right)}}{\sqrt{n/\alpha\left(n\right)}}\right)^{m_{2}}\right]=\left.\partial_{s_{1}}^{m_{1}}\partial_{s_{2}}^{m_{2}}F_{n}\left(s_{1},s_{2}\right)\right|_{\left(s_{1},s_{2}\right)=0}\label{eq:tightmoments}
\end{equation}
holds. By linearity of the expectation, we can expand the product
in equation (\ref{eq:tightproduct}) and then apply Equation (\ref{eq:tightmoments})
to each summand. A calculation then yields
\begin{align*}
I_{n}= & \frac{\alpha\left(n\right)^{2}}{Z_{n,\alpha}n^{2}}\left[z^{n}\right]\left[G_{n,t_{1},t}\left(z\right)G_{n,t,t_{2}}\left(z\right)\exp\left(\sum_{j=1}^{\alpha\left(n\right)}\frac{z^{j}}{j}\right)\right],
\end{align*}
where
\[
G_{n,t_{1},t}\left(z\right):=\left(\sum_{j=b_{t_{1}}\left(n\right)+1}^{b_{t}\left(n\right)}\frac{z^{j}-x_{n,\alpha}^{j}}{j}\right)^{2}+\sum_{j=b_{t_{1}}\left(n\right)+1}^{b_{t}\left(n\right)}\frac{z^{j}}{j}.
\]
The additional terms of the form $\sum_{j=b_{t_{1}}\left(n\right)+1}^{b_{t}\left(n\right)}\frac{z^{j}}{j}$
result from the product rule when calculating the second derivative
with respect to the same variable $s_{1}$. We now proceed as in the
proof of Proposition \ref{prop:GenSaddle} with $q_{j,n}=\mathbbm{1}_{\{j\leq\alpha(n)\}}$, which is admissible. The functions $G_{n,t_{1},t}\left(z\right)G_{n,t,t_{2}}\left(z\right)$ would play the role of $f_n$, but they only satisfy (i) and (ii) (by Lemma \ref{lem:partLambda0}). Since (iii) does in general not hold, we will have to make some adaptations. As in the proof of Proposition \ref{prop:GenSaddle}, by Cauchy's integral formula, we write $I_{n}$ as a
contour integral along $\partial B_{x_{n,\alpha}}\left(0\right)$ and
introduce the function $g_{n}(\theta)=\sum_{j=1}^{\alpha(n)}x_{n,\alpha}^j\frac{\mathrm{e}^{\mathrm{i}j\theta}-1}{j}$.
We then arrive at the expression
\begin{align*}
I_{n}= & \frac{\alpha\left(n\right)^{2}}{Z_{n,\alpha}n^{2}}\frac{\exp\left(\sum_{j=1}^{\alpha\left(n\right)}\frac{x_{n,\alpha}^{j}}{j}\right)}{2\pi x_{n,\alpha}^{n}}\int_{-\pi}^{\pi}G_{n,t_{1},t}\left(x_{n,\alpha}\mathrm{e}^{\mathrm{i}\theta}\right)G_{n,t,t_{2}}\left(x_{n,\alpha}\mathrm{e}^{\mathrm{i}\theta}\right)\exp\left(g_{n}\left(\theta\right)\right)\mathrm{d}\theta.
\end{align*}
We also split the integral into two parts. The main contribution
is again due to the interval $\left[-\theta_n,\theta_n\right]$
%with
%$\theta_n:=n^{-\frac{5}{12}}\alpha\left(n\right)^{-\frac{7}{12}}$
. By Lemma \ref{lem:partLambda0}, literally retracing the
steps in the proof of Proposition \ref{prop:GenSaddle} shows that
\[
\frac{\alpha\left(n\right)^{2}}{Z_{n,\alpha}n^{2}}\frac{\exp\left(\sum_{j=1}^{\alpha\left(n\right)}\frac{x_{n,\alpha}^{j}}{j}\right)}{2\pi x_{n,\alpha}^{n}}\int_{\pi\geq \left|\theta\right|>\theta_n}G_{n,t_{1},t}\left(x_{n,\alpha}\mathrm{e}^{\mathrm{i}\theta}\right)G_{n,t,t_{2}}\left(x_{n,\alpha}\mathrm{e}^{\mathrm{i}\theta}\right)\exp\left(g_{n}\left(\theta\right)\right)\mathrm{d}\theta
\]
vanishes faster than any power of $1/n$. It poses no problem due to $t_2-t_1 \geq \log(n/\alpha(n))/\alpha(n)$.
For $|\theta|\leq \theta_n$, apply 
$
|\mathrm{e}^{\mathrm{i}j\theta}-1| \leq c_1 j\theta
$
for some $c_1 >0$ for all $j$ and $|\mathrm{e}^{\mathrm{i}j\theta}|=1$. Then there is $c_2 >0$ such that
\begin{align*}
&\left| G_{n,t_{1},t}\left(x_{n,\alpha}\mathrm{e}^{\mathrm{i}\theta}\right)G_{n,t,t_{2}}\left(x_{n,\alpha}\right)\right|\\
\leq & c_2 \left(\left(\theta\sum_{j=b_{t_{1}}\left(n\right)+1}^{b_{t}\left(n\right)} x_{n,\alpha}^{j}\right)^{2}+\sum_{j=b_{t_{1}}\left(n\right)+1}^{b_{t}\left(n\right)}\frac{x_{n,\alpha}^{j}}{j}\right)
\left(\left(\theta\sum_{j=b_{t}\left(n\right)+1}^{b_{t_2}\left(n\right)} x_{n,\alpha}^{j}\right)^{2}+\sum_{j=b_{t}\left(n\right)+1}^{b_{t_2}\left(n\right)}\frac{x_{n,\alpha}^{j}}{j}\right)
\end{align*}
for all $n$. Due to equation \eqref{eq:gN}, we have
$
|\exp\left(g_n(\theta)\right)|\leq c_3 \exp\left(-\frac{\lambda_{2,n}}{2}\theta^2 \right)
$
for some $c_3>0$ and all $|\theta|\leq \theta_n$ if $n$ is large enough. By substituting $v=\sqrt{\lambda_{2,n}}\theta$, we therefore obtain
$$
\left|\int_{-\theta_n}^{\theta_n}\theta^{k}\exp\left(g_{n}\left(\theta\right)\right)\mathrm{d}\theta\right|\leq c_4 \lambda_{2,n}^{-\frac{k+1}{2}}
$$
for some $c_4>0$ and $0\leq k \leq 4$ because of the moments of
the normal distribution. By linearity of the integral as well as the
definition of $Z_{n,\alpha}$ and Lemmata \ref{lem:Lambda2} and \ref{lem:StraffHilfe}, we conclude
\begin{align*}
 I_n \leq  c^\prime \left[\left(t-t_{1}\right)^{2}+t-t_{1}\right]\left[\left(t_{2}-t\right)^{2}+\left(t_{2}-t\right)\right] \leq c \left(t_{2}-t_{1}\right)^{2}
\end{align*}
%\begin{align*}
% &\left| \int_{-\theta_n}^{\theta_n}G_{n,t_{1},t}\left(x_{n,\alpha}\mathrm{e}^{\mathrm{i}\theta}\right)G_{n,t,t_{2}}\left(x_{n,\alpha}\mathrm{e}^{\mathrm{i}\theta}\right)\exp\left(g_{n}\left(\theta\right)\right)\mathrm{d}\theta\right|\\
%\leq & \caO\left(\left[\left(t-t_{1}\right)^{2}+t-t_{1}\right]\left[\left(t_{2}-t\right)^{2}+\left(t_{2}-t\right)\right]\right)\\
%\leq & \left(t_{2}-t_{1}\right)^{2}
%\end{align*}
for some $c^\prime, c>0$ and $n$ large enough. The last step holds due to $\delta \leq t_{1}\leq t\leq t_{2}\leq1$. 
%The fact that
%\[
%\frac{\alpha\left(n\right)^{2}}{Z_{n,\alpha}n^{2}}\frac{\exp\left(\sum_{j=1}^{\alpha\left(n\right)}\frac{x_{n,\alpha}^{j}}{j}\right)}{2\pi x_{n,\alpha}^{n}}\int_{\left|\theta\right|>\theta_n}G_{n,t_{1},t}\left(x_{n,\alpha}\mathrm{e}^{\mathrm{i}\theta}\right)G_{n,t,t_{2}}\left(x_{n,\alpha}\mathrm{e}^{\mathrm{i}\theta}\right)\exp\left(g_{n}\left(\theta\right)\right)\mathrm{d}\theta=o\left(1\right)
%\]
%is clear from Equation (\ref{eq:restint}) and the asymptotic properties
%of the function $\alpha$. 
%The claim is proved.
\end{proof}

\subsection{Properties of $h_n$\label{sub:Beweise}}
This section provides the proofs for five properties of $h_n$ and its derivatives stated in Section \ref{sec:limitshape}. We are going to need the asymptotics presented in
%\begin{color}{red}
%Das folgende Lemma kann stark gekuerzt werden.
%\end{color}
\begin{lem}
\label{lem:partLambda0}Let $0<t\leq1$. Then,
\begin{equation}
%\label{eq:partLambda1}
\sum_{j=1}^{b_{t}\left(n\right)}x_{n,\alpha}^{j}\sim tn\text{ and }\sum_{j=1}^{b_{t}\left(n\right)}\frac{x_{n,\alpha}^{j}}{j}\sim t\frac{n}{\alpha\left(n\right)}
\end{equation}
hold.
%\begin{equation}\label{eq:partLambda0}
%\sum_{b_{t_{1}}\left(n\right)+1}^{b_{t_{2}}\left(n\right)}\frac{x_{n,\alpha}^{j}}{j}\sim\left(t_{2}-t_{1}\right)\frac{n}{\alpha\left(n\right)}.
%\end{equation}
\end{lem}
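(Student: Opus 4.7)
The plan is to evaluate both sums by exploiting the rapid growth of $x_{n,\alpha}^{j}$, whose geometric concentration near the upper end of the summation range makes the sums approximable by their largest terms. The asymptotic inputs are all furnished by Lemma~\ref{lem:StaSad}:
\[
\alpha(n)\log x_{n,\alpha}\sim\log\!\left(\tfrac{n}{\alpha(n)}\log\tfrac{n}{\alpha(n)}\right),\qquad x_{n,\alpha}-1\sim\log x_{n,\alpha},\qquad x_{n,\alpha}^{\alpha(n)}\sim\tfrac{n}{\alpha(n)}\log\tfrac{n}{\alpha(n)}.
\]

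For the first sum, I would first compute $x_{n,\alpha}^{b_t(n)}$. Writing $b_t(n)-\alpha(n)=\lfloor\log(t)\,\alpha(n)/\log(n/\alpha(n))\rfloor$ and combining with the asymptotics above gives $(b_t(n)-\alpha(n))\log x_{n,\alpha}\to\log t$, hence $x_{n,\alpha}^{b_t(n)}\sim t\,(n/\alpha(n))\log(n/\alpha(n))$; in particular this tends to infinity. The closed form of the geometric series then yields
\[
\sum_{j=1}^{b_t(n)}x_{n,\alpha}^{j}=\frac{x_{n,\alpha}\bigl(x_{n,\alpha}^{b_t(n)}-1\bigr)}{x_{n,\alpha}-1}\sim\frac{t(n/\alpha(n))\log(n/\alpha(n))}{\log(n/\alpha(n))/\alpha(n)}=tn,
\]
which is the first assertion.

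For the second sum, I would introduce an auxiliary cutoff $K_n=o(\alpha(n))$ with $x_{n,\alpha}^{K_n}/(\log n)^{2}\to\infty$; such a choice is possible because $\alpha(n)\log x_{n,\alpha}$ is of the same order as $\log n$ (e.g.\ $K_n=\alpha(n)/\sqrt{\log(n/\alpha(n))}$ works). Set $j_\ast:=b_t(n)-K_n$. On the upper portion $j_\ast<j\le b_t(n)$, one has $1/j=(1+o(1))/\alpha(n)$ uniformly because $K_n/\alpha(n)\to 0$ and $b_t(n)\sim\alpha(n)$; combined with the first assertion (applied to the analogous partial sum of $x_{n,\alpha}^{j}$, which contributes the same leading order $tn$), this portion contributes $\sim tn/\alpha(n)$. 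The lower portion I would bound crudely using $x_{n,\alpha}^{j}\le x_{n,\alpha}^{j_\ast}$ for $j\le j_\ast$:
\[
\sum_{j=1}^{j_\ast}\frac{x_{n,\alpha}^{j}}{j}\le x_{n,\alpha}^{j_\ast}\sum_{j=1}^{j_\ast}\frac{1}{j}=\mathcal{O}\!\left(\frac{x_{n,\alpha}^{b_t(n)}\log n}{x_{n,\alpha}^{K_n}}\right)=\mathcal{O}\!\left(\frac{tn}{\alpha(n)}\cdot\frac{\log(n/\alpha(n))\log n}{x_{n,\alpha}^{K_n}}\right)=o\!\left(\tfrac{tn}{\alpha(n)}\right)
\]
by the choice of $K_n$. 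Adding the two portions completes the proof.

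The main technical point is the calibration of $K_n$: it must be small enough that $j\sim\alpha(n)$ uniformly on the upper range, yet large enough that $x_{n,\alpha}^{K_n}$ overwhelms the polylogarithmic factor $\log n$ arising from the harmonic sum $\sum 1/j$. Both requirements can be met simultaneously thanks to the precise asymptotics for $\log x_{n,\alpha}$ in Lemma~\ref{lem:StaSad}; no further conceptual difficulty is anticipated.
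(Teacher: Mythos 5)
Your proof is correct, but the second half takes a genuinely different route from the paper. For the first sum you use the closed form of the geometric series together with $x_{n,\alpha}^{b_t(n)}\sim t\,x_{n,\alpha}^{\alpha(n)}$ and $x_{n,\alpha}-1\sim\log x_{n,\alpha}$; the paper instead compares the sum with $\int_0^{b_t(n)}x_{n,\alpha}^v\,\dd v$ and evaluates the integral, which is essentially the same computation. For the second sum the paper does not split at an intermediate cutoff: it imports $\sum_{j=1}^{\alpha(n)}\frac{x_{n,\alpha}^j}{j}\sim \frac{n}{\alpha(n)}$ from Proposition 4.8 of \cite{BS17}, writes $\sum_{j=1}^{b_t(n)}=\sum_{j=1}^{\alpha(n)}-\sum_{j=b_t(n)+1}^{\alpha(n)}$, and sandwiches the tail via $\frac{1}{\alpha(n)}\sum_{j>b_t(n)}x^j\leq\sum_{j>b_t(n)}\frac{x^j}{j}\leq\frac{1}{b_t(n)+1}\sum_{j>b_t(n)}x^j$, using $b_t(n)\sim\alpha(n)$ and the first claim. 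Your two-block argument with the calibrated cutoff $K_n$ (small against $\alpha(n)$, but with $x_{n,\alpha}^{K_n}$ beating $(\log n)^2$) is self-contained and avoids the external input from \cite{BS17}, at the cost of the extra calibration; the paper's version is shorter given that input. One small point you should make explicit: in the upper block you invoke the first assertion for $\sum_{j=j_\ast+1}^{b_t(n)}x_{n,\alpha}^j\sim tn$, which is not literally of the form $\sum_{j\leq b_s(n)}$; it follows by subtracting $\sum_{j\leq j_\ast}x_{n,\alpha}^j=\mathcal{O}\bigl(x_{n,\alpha}^{j_\ast}/(x_{n,\alpha}-1)\bigr)=\mathcal{O}\bigl(tn/x_{n,\alpha}^{K_n}\bigr)=o(n)$, i.e.\ the same geometric estimate you already use for the lower block, so this is a one-line addition rather than a gap.
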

\begin{proof}
%We start with Equation \eqref{eq:partLambda1}. Let $0<t<1$. We first prove 
%\begin{equation}\label{eq:L1zwischen}
%\sum_{j=1}^{b_{t}\left(n\right)}x_{n,\alpha}^{j}\sim tn.
%\end{equation}
%Equation \eqref{eq:partLambda1} then follows easily due to \eqref{eq:StaSad}. 
Since $x_{n,\alpha}>1$, we have
\begin{align}
\int_{0}^{b_{t}\left(n\right)}x_{n,\alpha}^{v}\mathrm{d}v\leq\sum_{j=1}^{b_{t}\left(n\right)}x_{n,\alpha}^{j}\leq\int_{1}^{b_{t}\left(n\right)+1}x_{n,\alpha}^{v}\mathrm{d}v \sim \int_{0}^{b_{t}\left(n\right)}x_{n,\alpha}^{v}\mathrm{d}v\label{eq:partHilfe}
\end{align}
by Lemma \ref{lem:StaSad}.
% we obtain
%\[
%\int_{1}^{b_{t}\left(n\right)+1}x_{n,\alpha}^{v}\mathrm{d}v\sim\int_{0}^{b_{t}\left(n\right)}x_{n,\alpha}^{v}\mathrm{d}v.
%\]
It therefore remains to be shown that
%\begin{align*}
$
\int_{0}^{b_{t}\left(n\right)}x_{n,\alpha}^{v}\mathrm{d}v=  \frac{\left(x_{n,\alpha}\right)^{b_{t}\left(n\right)}-1}{\log\left(x_{n,\alpha}\right)}
\sim  tn.
$
%\end{align*}
%According to Lemma \ref{lem:StaSad}, both
%\[
%\left(x_{n,\alpha}\right)^{\alpha\left(n\right)}=\frac{n}{\alpha\left(n\right)}\log\left(\frac{n}{\alpha\left(n\right)}\right)\left(1+o\left(1\right)\right)
%\]
%and
%\[
%\frac{1}{\log\left(x_{n,\alpha}\right)}\sim\frac{\alpha\left(n\right)}{\log\left(\frac{n}{\alpha\left(n\right)}\log\left(\frac{n}{\alpha\left(n\right)}\right)\right)}\sim\frac{\alpha\left(n\right)}{\log\left(\frac{n}{\alpha\left(n\right)}\right)}
%\]
%hold. 
Since
$
0<\frac{b_{t}\left(n\right)}{\alpha\left(n\right)}\leq 1
$
for $n$ large enough, the first claim follows from equation \eqref{eq:partHilfe} and
\begin{align*}
\left(x_{n,\alpha}\right)^{b_{t}\left(n\right)}=  \left[\left(x_{n,\alpha}\right)^{\alpha\left(n\right)}\right]^{\frac{b_{t}\left(n\right)}{\alpha\left(n\right)}}%\nonumber 
\sim  \exp\left[\frac{b_{t}\left(n\right)}{\alpha\left(n\right)}\log\left(\frac{n}{\alpha\left(n\right)}\log\left(\frac{n}{\alpha\left(n\right)}\right)\right)\right]%\left(1+o\left(1\right)\right)\nonumber \\
\sim  t\frac{n}{\alpha\left(n\right)}\log\left(\frac{n}{\alpha\left(n\right)}\right),\label{eq:kNpower}
\end{align*}
which holds due to Lemma \ref{lem:StaSad}.
%We now turn to the second claim. 
It was proved in
%Equation (4.7) and 
Proposition 4.8 in \cite{BS17} that
%\begin{equation*}
%\label{eq:Lambda0}
$\sum_{j=1}^{\alpha(n)}\frac{x_{n,\alpha}^{j}}{j}\sim \frac{n}{\alpha(n)}$.
%\end{equation*}
%Let $0<t<1$. We will prove
%\begin{equation}\label{eq:L0zwischen}
%\sum_{j=1}^{b_{t}\left(n\right)}\frac{x_{n,\alpha}^{j}}{j}\sim t\frac{n}{\alpha(n)}
%,\end{equation}
%which then entails Equation \eqref{eq:partLambda0}. 
Consider
%\begin{equation}\label{eq:L0diff}
$
\sum_{j=1}^{b_t(n)}\frac{x_{n,\alpha}^j}{j}=\sum_{j=1}^{\alpha(n)}\frac{x_{n,\alpha}^j}{j}-\sum_{j=b_t(n)+1}^{\alpha(n)}\frac{x_{n,\alpha}^j}{j}.
$
%\end{equation}
Due to $b_t(n)\sim \alpha(n)$ and the first claim,
\[
\frac{1}{\alpha(n)} \sum_{j=b_t(n)+1}^{\alpha(n)}x_{n,\alpha}^j           \leq    \sum_{j=b_t(n)+1}^{\alpha(n)}\frac{x_{n,\alpha}^j}{j} \leq \frac{1}{b_t(n)+1} \sum_{j=b_t(n)+1}^{\alpha(n)}x_{n,\alpha}^j   
\]
yields
%
%\[
%\sum_{j=b_t(n)+1}^{\alpha(n)}\frac{x_{n,\alpha}^j}{j}\sim (1-t)\frac{n}{\alpha(n)}.
%\]
%By \eqref{eq:Lambda0} and \eqref{eq:L0diff}, 
the second claim.
\end{proof}
Let  $\gamma\left(n\right)\geq\log (n)$, $\boldsymbol{t}=\left(t_{1},...,t_{m}\right)^{T}$
for $m\in\mathbb{N}$ and $h_n(\bss)$ as in \eqref{eq:h(s)} throughout this section.
Set further $t_0=0$ and $t_{m+1}=1$. 
%Recall that
%\[
%h_{n}\left(\boldsymbol{s}\right)=\sum_{i=0}^{m}\mathrm{e}^{\sum_{l=i+1}^{m}\frac{s_l}{\gamma (n)}}\sum_{j=b_{t_{i}}\left(n\right)+1}^{b_{t_{i+1}}\left(n\right)}\frac{\left(x_{n,\alpha,\gamma,\boldsymbol{t}}\left(\boldsymbol{s}\right)\right)^{j}}{j}-n\log\left(x_{n,\alpha,\gamma,\boldsymbol{t}}\left(\boldsymbol{s}\right)\right).
%\]
%
Property (i), which states that $h_n$ is infinitely often differentiable in $\bss$, follows from the differentiability of the saddle point $x_{{n,\alpha,\gamma,\boldsymbol{t}}}$ which can be shown by applying the implicit function theorem to the function
%
%\begin{lem}
%\label{lem:NewSaddleDiff}Under the assumptions of Lemma \ref{lem:NewSaddle},
%$x_{{n,\alpha,\gamma,\boldsymbol{t}}}$ is infinitely often differentiable in $\boldsymbol{s}$.
%\end{lem}
%\begin{proof}
%[Proof of Lemma \ref{lem:NewSaddleDiff}]
%Consider the function
$
F\left(\boldsymbol{s},x\right)=\sum_{i=0}^{m}\sum_{j=b_{t_{i}}\left(n\right)+1}^{b_{t_{i+1}}\left(n\right)}\left[\exp\left(\sum_{l=i+1}^{m}\frac{s_l}{\gamma (n)}\right)x\right]^{j}-n,
$
see \eqref{eq:NewSaddleLimitshape}.
%for positive $x$ which is motivated by (\ref{eq:NewSaddleLimitshape}).
%Then $F$ is infinitely often differentiable in both $\boldsymbol{s}$
%and $x$ and $\partial_{x}F$ is invertible for all $x>0$. The claim
%now follows from the implicit function theorem.
%\end{proof}
%
So we can compute the derivatives of $h_n$.\\ 
Fix $i_3 \leq i_2 \leq i_1$ and let $x_{n}\left(\boldsymbol{s}\right) :=x_{n,\alpha,\gamma,\boldsymbol{t}}\left(\boldsymbol{s}\right)$. For the sake of brevity, we introduce the notations
$$
\lambda_{p,n}^{(i_1)}:=\sum_{i=0}^{i_1-1}\mathrm{e}^{\sum_{l=i+1}^{m}\frac{s_l}{\gamma (n)}}\sum_{j=b_{t_{i}}\left(n\right)+1}^{b_{t_{i+1}}\left(n\right)}
j^{p-1}\left(x_{n}\left(\boldsymbol{s}\right)\right)^{j}
$$
so that $\lambda_{p,n}=\lambda_{p,n}^{(m+1)}$. We obtain
\begin{align}
\partial_{s_{i_{1}}}h_{n}\left(\boldsymbol{s}\right)= &\frac{1}{\gamma(n)}\lambda_{0,n}^{(i_1)},\label{eq:h1}\\
\partial_{s_{i_{2}}}\partial_{s_{i_{1}}}h_{n}\left(\boldsymbol{s}\right)= &\frac{1}{\gamma(n)} \frac{\partial_{s_{i_{2}}}x_{n}\left(\boldsymbol{s}\right)}
{x_{n}\left(\boldsymbol{s}\right)}\lambda_{1,n}^{(i_1)} +\frac{1}{(\gamma(n))^2} \lambda_{0,n}^{(i_2)}\label{eq:h2}
\end{align}
and
\begin{align}
  \partial_{s_{i_{3}}}\partial_{s_{i_{2}}}\partial_{s_{i_{1}}}h_{n}\left(\boldsymbol{s}\right)= &\frac{1}{(\gamma(n))^2} \frac{\partial_{s_{i_{2}}}x_{n}\left(\boldsymbol{s}\right)}{x_{n}\left(\boldsymbol{s}\right)} \lambda_{1,n}^{(i_3)}\label{eq:h3}\\
 & +\frac{1}{\gamma(n)}\left(\frac{\partial_{s_{i_{3}}}\partial_{s_{i_{2}}}x_{n}\left(\boldsymbol{s}\right)}{x_{n}\left(\boldsymbol{s}\right)}-\frac{\partial_{s_{i_{2}}}x_{n}\left(\boldsymbol{s}\right)}{x_{n}\left(\boldsymbol{s}\right)}\frac{\partial_{s_{i_{3}}}x_{n}\left(\boldsymbol{s}\right)}{x_{n}\left(\boldsymbol{s}\right)}\right)  \lambda_{1,n}^{(i_1)}
\nonumber\\
 & +\frac{1}{\gamma(n)}\frac{\partial_{s_{i_{2}}}x_{n}\left(\boldsymbol{s}\right)}{x_{n}\left(\boldsymbol{s}\right)}\frac{\partial_{s_{i_{3}}}x_{n}\left(\boldsymbol{s}\right)}{x_{n}\left(\boldsymbol{s}\right)}  \lambda_{2,n}^{(i_1)}
 +\frac{1}{(\gamma(n))^2}\frac{\partial_{s_{i_{3}}}x_{n}\left(\boldsymbol{s}\right)}{x_{n}\left(\boldsymbol{s}\right)} \lambda_{1,n}^{(i_2)}
  + \frac{1}{(\gamma(n))^3}\lambda_{0,n}^{(i_3)}.\nonumber
\end{align}
In order to prove properties (ii) to (v), we need to understand the derivatives of the saddle point.
\begin{lem}
\label{lem:Xdev}
%Let $\gamma\left(n\right)\to\infty$ and $x_{n}\left(\boldsymbol{s}\right)=x_{n,\alpha,\gamma,\boldsymbol{t}}\left(\boldsymbol{s}\right)$
%be the unique positive solution of Equation (\ref{eq:NewSaddleLimitshape}).
Fix $i_{2}\leq i_{1}$. Then,
\[
\frac{\partial_{s_{i_{1}}}x_{n}\left(\boldsymbol{s}\right)}{x_{n}\left(\boldsymbol{s}\right)}=-\frac{1}{\gamma(n)}\frac{\lambda_{1,n}^{(i_1)}}
{\lambda_{2,n}}.
%{\sum_{i=0}^{m}\mathrm{e}^{\sum_{l=i+1}^{m}\frac{s_l}{\gamma (n)}}\sum_{j=b_{t_{i}}\left(n\right)+1}^{b_{t_{i+1}}\left(n\right)}j\left(x_{n}\left(\boldsymbol{s}\right)\right)^{j}}.
\]
Moreover,
\[
\frac{\partial_{s_{i_{1}}}x_{n}\left(\boldsymbol{s}\right)}{x_{n}\left(\boldsymbol{s}\right)}=\caO\left(\frac{1}{\gamma(n)\alpha(n)}\right)
\text{ and }
\frac{\partial_{s_{i_{2}}}\partial_{s_{i_{1}}}x_{n}\left(\boldsymbol{s}\right)}{x_{n}\left(\boldsymbol{s}\right)}-\frac{\partial_{s_{i_{2}}}x_{n}\left(\boldsymbol{s}\right)}{x_{n}\left(\boldsymbol{s}\right)}\frac{\partial_{s_{i_{1}}}x_{n}\left(\boldsymbol{s}\right)}{x_{n}\left(\boldsymbol{s}\right)}=\caO\left(\frac{1}{(\gamma(n))^2\alpha\left(n\right)}\right)
\]
hold locally uniformly in $\boldsymbol{s}$.
\end{lem}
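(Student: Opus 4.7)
The plan is to apply implicit differentiation to the defining equation \eqref{eq:NewSaddleLimitshape}, i.e.\ to $F(\bss,x) = 0$ with
\[
F(\bss,x) := \sum_{i=0}^{m}\mathrm{e}^{\sum_{l=i+1}^{m}\frac{s_{l}}{\gamma(n)}}\sum_{j=b_{t_{i}}+1}^{b_{t_{i+1}}} x^{j} - n.
\]
Since $s_{i_1}$ appears in the exponential factor only for those $i$ with $i+1\leq i_1$, i.e.\ $i\in\{0,\ldots,i_1-1\}$, direct differentiation yields
\[
\partial_{s_{i_1}} F(\bss,x) = \tfrac{1}{\gamma(n)}\, \lambda_{1,n}^{(i_1)}, \qquad x\,\partial_x F(\bss,x) = \lambda_{2,n},
\]
where all quantities are evaluated at $x=x_n(\bss)$. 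The implicit function theorem (whose hypothesis is satisfied because $\partial_x F > 0$ on $(0,\infty)$) then gives
\[
\frac{\partial_{s_{i_1}} x_n(\bss)}{x_n(\bss)} = -\frac{\partial_{s_{i_1}} F}{x\,\partial_x F} = -\frac{1}{\gamma(n)}\frac{\lambda_{1,n}^{(i_1)}}{\lambda_{2,n}},
\]
which is the first claim.

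For the order estimate, I would observe that $\lambda_{1,n}^{(i_1)}$ is a partial sum of the positive terms making up $\lambda_{1,n}^{(m+1)}$, and by \eqref{eq:NewSaddleLimitshape} this total equals $n$; hence $\lambda_{1,n}^{(i_1)}\leq n$ uniformly in $\bss$ in a compact set. Combined with Lemma \ref{lem:Lambda2}, $\lambda_{2,n}\approx n\alpha(n)$ locally uniformly in $\bss$, and the claim $\partial_{s_{i_1}} x_n/x_n = \caO(1/(\gamma(n)\alpha(n)))$ follows.

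For the second identity I would note that
\[
\frac{\partial_{s_{i_2}}\partial_{s_{i_1}} x_n(\bss)}{x_n(\bss)} - \frac{\partial_{s_{i_2}} x_n(\bss)}{x_n(\bss)}\frac{\partial_{s_{i_1}} x_n(\bss)}{x_n(\bss)} = \partial_{s_{i_2}} \!\left(\frac{\partial_{s_{i_1}} x_n(\bss)}{x_n(\bss)}\right),
\]
so it suffices to differentiate $-\gamma(n)^{-1}\lambda_{1,n}^{(i_1)}/\lambda_{2,n}$ once more in $s_{i_2}$. Using that
\[
\partial_{s_{i_2}}\lambda_{p,n}^{(i_1)} = \tfrac{1}{\gamma(n)}\lambda_{p,n}^{(\min(i_1,i_2))} + \tfrac{\partial_{s_{i_2}} x_n}{x_n}\,\lambda_{p+1,n}^{(i_1)},
\]
together with the bounds $\lambda_{p,n}^{(i)}\leq \lambda_{p,n}\leq n\alpha(n)^{p-1}$ from \eqref{eq:LambdaP}, the already-proved estimate $\partial_{s_{i_2}} x_n/x_n = \caO(1/(\gamma\alpha))$, and $\lambda_{2,n}\approx n\alpha$, the quotient rule yields each resulting term of order $\caO(1/(\gamma(n)^2\alpha(n)))$ locally uniformly in $\bss$.

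The main obstacle is essentially just careful bookkeeping: ensuring that the local uniformity in $\bss$ propagates through the $\lambda$-ratios (which it does since the exponential weights stay bounded away from $0$ and $\infty$ on compacts), and that the term-by-term combination of $\caO(1/(\gamma\alpha))$ contributions in the derivative of the quotient does not accidentally produce a cancellation-sensitive leading order. Since every summand in the quotient rule expansion is independently seen to be $\caO(1/(\gamma\alpha))$ before the prefactor $1/\gamma$ is applied, no subtle cancellation is needed and the estimate follows.
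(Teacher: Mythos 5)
Your proposal is correct and follows essentially the same route as the paper: implicit differentiation of the saddle-point equation \eqref{eq:NewSaddleLimitshape} to get the exact formula, the bound via $\lambda_{1,n}^{(i_1)}\leq\lambda_{1,n}=n$ together with Lemma \ref{lem:Lambda2}, and then one more differentiation of $-\gamma(n)^{-1}\lambda_{1,n}^{(i_1)}/\lambda_{2,n}$ (whose $s_{i_2}$-derivative is exactly the combination in the second claim), bounding each resulting term with \eqref{eq:LambdaP}, the first estimate, and $\lambda_{2,n}\approx n\alpha(n)$. Your expansion of the quotient-rule terms reproduces the paper's four-term expression, so there is nothing to add.
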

\begin{proof}
%[Proof of Lemma \ref{lem:Xdev}]
Differentiating equation (\ref{eq:NewSaddleLimitshape}) with respect to $s_{i_{1}}$ yields
$
0=\frac{1}{\gamma(n)}\lambda_{1,n}^{(i_1)}
+\frac{\partial_{s_{i_{1}}}x_n\left(\boldsymbol{s}\right)}{x_n\left(\boldsymbol{s}\right)}\lambda_{2,n},
$
so
\[
\frac{\partial_{s_{i_1}}x_{n}\left(\boldsymbol{s}\right)}{x_{n}\left(\boldsymbol{s}\right)}=-\frac{1}{\gamma(n)}\frac{\lambda_{1,n}^{(i_1)}}
{\lambda_{2,n}}
=\caO\left(\frac{1}{\gamma(n)\alpha(n)}\right)
\]
by equation \eqref{eq:NewSaddleLimitshape} and Lemma \ref{lem:Lambda2}. W.l.o.g., let $i_2 \leq i_1$.
Differentiating once more, now with respect to $s_{i_{2}}$, we obtain
\begin{align*}
  \frac{\partial_{s_{i_{2}}}\partial_{s_{i_{1}}}x_{n}\left(\boldsymbol{s}\right)}{x_{n}\left(\boldsymbol{s}\right)}-\frac{\partial_{s_{i_{2}}}x_{n}\left(\boldsymbol{s}\right)}{x_{n}\left(\boldsymbol{s}\right)}\frac{\partial_{s_{i_{1}}}x_{n}\left(\boldsymbol{s}\right)}{x_{n}\left(\boldsymbol{s}\right)}
= & -\frac{1}{(\gamma(n))^2}\frac{\lambda_{1,n}^{(i_2)}}{\lambda_{2,n}}
  -\frac{1}{\gamma(n)}\frac{\partial_{s_{i_{2}}}x_{n}\left(\boldsymbol{s}\right)}{x_{n}\left(\boldsymbol{s}\right)}\frac{\lambda_{2,n}^{(i_1)}}{\lambda_{2,n}}\\
 & +\frac{1}{(\gamma(n))^2}\frac{\lambda_{1,n}^{(i_1)}}
{\left(\lambda_{2,n}\right)^{2}}
\lambda_{2,n}^{(i_2)}
 +\frac{1}{\gamma(n)}\frac{\partial_{s_{i_{2}}}x_{n}\left(\boldsymbol{s}\right)}{x_{n}\left(\boldsymbol{s}\right)}\frac{\lambda_{1,n}^{(i_1)}}
{\left(\lambda_{2,n}\right)^{2}}
\lambda_{3,n}.
\end{align*}
Applying Lemma \ref{lem:Lambda2}, equation (\ref{eq:LambdaP}), and
the first result to each term, we conclude the last claim.
%that
%\[
%\frac{\partial_{s_{i_{2}}}\partial_{s_{i_{1}}}x_{n}\left(\boldsymbol{s}\right)}{x_{n}\left(\boldsymbol{s}\right)}-\frac{\partial_{s_{i_{2}}}x_{n}\left(\boldsymbol{s}\right)}{x_{n}\left(\boldsymbol{s}\right)}\frac{\partial_{s_{i_{1}}}x_{n}\left(\boldsymbol{s}\right)}{x_{n}\left(\boldsymbol{s}\right)}=\caO\left(\frac{1}{(\gamma(n))^2\alpha\left(n\right)}\right)
%\]
%locally uniformly in $\boldsymbol{s}$. 
\end{proof}
Property (ii) is now a direct consequence of equation \eqref{eq:h1} and Lemma \ref{lem:partLambda0}, (iii) and (iv) follow from equation \eqref{eq:h2} and Lemmata \ref{lem:Xdev} and \ref{lem:partLambda0}. Property (v) can easily be deduced from equation \eqref{eq:h3} and Lemmata \ref{lem:Xdev} and \ref{lem:partLambda0}.

\section*{Acknowledgements}
H.S. acknowledges support by Deutsche Telekom Stiftung.

\bibliographystyle{plain}

%\bibliography{LitLim}

\end{document}